\newtheorem{theorem}{Theorem}[section] 
\newtheorem{assumption}{Assumption}[section]
\newtheorem{lemma}[theorem]{Lemma} 
\newtheorem{proposition}[theorem]{Proposition} 
\newtheorem{corollary}[theorem]{Corollary} 
\theoremstyle{remark}
\newtheorem{remark}[theorem]{\it \bf{Remark}\/}
\numberwithin{equation}{section}
\def\section{\@startsection{section}{1}%
  \z@{1.5\linespacing\@plus\linespacing}{.5\linespacing}%
  {\normalfont\bfseries\large\centering}}
\newcommand{\be}{\begin{equation}}
\newcommand{\ee}{\end{equation}}
\newcommand{\bea}{\begin{eqnarray}} 
  \newcommand{\eea}{\end{eqnarray}} 
\newcommand{\bee}{\begin{eqnarray*}}
\newcommand{\eee}{\end{eqnarray*}}
\def\pa{\partial}
\def\na{\nabla}
\def\NN{\mathbb{N}}
\def\RR{\mathbb{R}}
\def\ZZ{\mathbb{Z}}
\def\II{\mathbb{I}}
\def\ni{\noindent} 
\def\bs{\bigskip}
\def\eps{\varepsilon}
\def\fref#1{{\rm (\ref{#1})}}
\def\calH{{\mathcal H}}
\def\calL{{\mathcal L}}
\def\calC{{\mathcal C}}
\def\calE{{\mathcal E}}
\def\calF{{\mathcal F}}
\def\calS{{\mathcal S}}
\def\calJ{{\mathcal J}}
\def\calK{{\mathcal K}}
\def\Hunper{{\rm H}^1_{per}}
\def\H{{\rm H}}  
\def\supess{\mathop{\operator@font Sup\,ess}}
\DeclareMathOperator{\Tr}{Tr}
\definecolor{myorange}{cmyk}{0.,0.50,1.0,0.15}
\def\un{{\mathbbmss{1}}} 
\author[F. M\'ehats]{Florian M\'ehats}
\address[F. M\'ehats]{IRMAR, Universit\'e de Rennes 1 and IPSO, INRIA Rennes, France}
\email{florian.mehats@univ-rennes1.fr}
\author[O. Pinaud]{Olivier Pinaud}
\address[O. Pinaud]{Department of Mathematics, Colorado State University, Fort Collins, CO, USA}
\email{pinaud@math.colostate.edu}
\begin{document}

\begin{abstract}
This work is devoted to the analysis of the quantum Liouville-BGK equation. This equation arises in the work of Degond and Ringhofer on the derivation of quantum hydrodynamical models from first principles. Their theory consists in transposing to the quantum setting the closure strategy by entropy minimization used for kinetic equations. The starting point is the quantum Liouville-BGK equation, where the collision term is defined via a so-called quantum local equilibrium, defined as a minimizer of the quantum free energy under a local density constraint. We then address three related problems: we prove new results about the regularity of these quantum equilibria; we prove that the quantum Liouville-BGK equation admits a classical solution; and we investigate the long-time behavior of the solutions. The core of the proofs is based on a fine analysis of the properties of the minimizers of the free energy.
\end{abstract}


\title{The quantum Liouville-BGK equation and the moment problem}
\maketitle


\sloppy
\section{Introduction}

Our motivation in this work is to pursue the development of the mathematical foundations of the formal theory introduced by Degond and Ringhofer in \cite{DR} on the derivation of quantum hydrodynamical models from first principles. Such models provide a reduced description of the dynamics of many-particles systems, and are therefore of great interest for semiconductor applications for instance. The main idea behind Degond and Ringhofer's work is to transpose to quantum systems the moment closure by entropy minimization Levermore used for kinetic equations \cite{levermore}. This is by nature a non-pertubative approach, which is in great contrast with standard techniques consisting in adding (small) quantum corrections to classical fluid models. While the theory is now relatively well understood at a formal level, many mathematical questions remain unsolved. 

The starting point of the theory is the quantum Liouville equation with collision term
  \be \label{liouville}
i \hbar \partial_t \varrho=[H, \varrho]+i \hbar Q(\varrho),
\ee
where $\varrho$ is the density operator (a self-adjoint nonnegative trace class operator), $H$ is a given Hamiltonian, $[\cdot,\cdot]$ denotes the commutator between two operators, and $Q$ is a collision operator that will be  discussed later on. As in the kinetic case, an infinite cascade of equations for the (local) moments of $\varrho$ can be derived from \fref{liouville}, and this cascade cannot be closed since higher order moments do not only depend on lower order moments. Note that the definition of the local moments of $\varrho$ is unclear at this point. A good place to start with is to think of the moments of the associated Wigner function with respect to the momentum variable, which then yields the local particle density as the first moment, the local current density as the second and so on. 

By analogy with the classical case, Degond and Ringhofer then introduced a so-called \textit{quantum local equilibrium} (in the statistical sense) in order to close the infinite cascade. Since the system is driven to the equilibrium by the collision process, this local equilibrium is naturally related to $Q$. Even though $Q$ plays a central role in the dynamics at the small scale, devising its appropriate form for a problem of interest remains mostly an open question, and $Q$ is often obtained by analogy to classical collision operators. With the perspective of deriving macroscopic quantum models, it is argued that only a few properties of $Q$ and not its exact form are needed in order to define the quantum local equilibrium and the limiting quantum fluid equations. These properties are the collision invariants and the kernel of $Q$. In Degond-Ringhofer's approach, the collision invariants are the first few local moments of $\varrho$, say $N$ of them.  The kernel is defined with the entropy principle: the Liouville equation should dissipate the entropy (here the free energy, which is actually a relative entropy) until an equilibrium is attained; it is therefore expected that operators in the kernel of $Q$ are also minimizers of the free energy. The quantum local equilibria are then formally obtained as follows: they are minimizers of the quantum free energy
$$
F(u)= T \Tr (\beta(u)) + \Tr (H u),
$$ 
where $T$ is the temperature (we will set $\hbar=T=1$ for simplicity), $\beta$ is an entropy function, and $\Tr$ denotes operator trace, under the constraints that the $N$ first local moments of $u$ (i.e. the collision invariants) are equal to those of $\varrho$, the solution to the Liouville equation. Depending on the number of moments used in the closure procedure, several quantum macroscopic models are then derived: Quantum Euler, Quantum Energy Transport, Quantum Navier-Stokes, or Quantum Drift-Diffusion in the diffusive regime, we refer to \cite{brull-mehats,QDD-JCP,isotherme,QHD-CMS,DGMRlivre,QET,QHD-review,gambajung,jungel-matthes,jungel-matthes-milisic} for more details about these models and other references on quantum hydrodynamics.

Degond and Ringhofer's theory raises many interesting mathematical questions. The first one to consider is the construction of the minimizers of the free energy, as these latter define the local equilibria that subsequently lead to the fluid equations. The main difficulty lies in the fact that it is an infinite dimensional optimization problem with \textit{local} constraints. In order to be more explicit, let us consider the first moment only, namely the particle density. If $\varrho$ is a density operator with integral kernel $\rho(x,y)$, then the local density is defined formally by $\rho(x,x)$ (other equivalent definitions are given further). The problem is then to minimize $F$ under the constraint that the local density of the minimizers is a given function, say $n(x)$. In the sequel, we will refer to this problem as the \textit{moment problem}. Partial answers were given in \cite{MP-JSP, MP-KRM}, which, up to our knowledge, are the only results on this matter. 

In \cite{MP-KRM}, it is proved that the moment problem admits a unique solution when the first two moments are prescribed in an appropriate functional setting; the allowed configurations are fairly general, multidimensional, in bounded domains or in the whole space, for different classes of entropy functions (e.g. Boltzmann or Fermi-Dirac), and for linear or non-linear Hamiltonians (e.g. including Poisson or Hartree-Fock non-linear interactions). A rigorous construction of the minimizers when the third moment (i.e. the local energy) is prescribed is an open problem. 

The next question is to characterize the minimizers. As in the classical case where the minimizers of the Boltzmann entropy are Maxwellians, it is shown formally in Degond-Ringhofer's theory that the minimizers of the quantum free energy (for the Boltzmann entropy) are the so-called \textit{quantum Maxwellians}. They have the form $\exp(- \widetilde{H})$, where the exponential is taken in the sense of functional calculus and $\widetilde{H}$ is some Hamiltonian to be defined later on. Justifying such a formula is actually a difficult task, much harder than proving the existence of the minimizers. The question is addressed in \cite{MP-JSP} where the quantum Maxwellians are properly defined in a one-dimensional setting, for the moment problem with density constraint only. The one-dimensional setting is not incidental: the obtained quantum Maxwellians have the form $\exp(-(H+A))$, where $A(x)$ is the chemical potential, the Lagrange parameter associated with the local density constraint; when $n$ is in the Sobolev space $\H^1$ (with periodic boundary conditions), then $A$ is a distribution in $\H^{-1}$, and this is an optimal result. As a consequence, the Hamiltonian $H+A$ can readily be defined in the sense of quadratic forms in the one-dimensional case, but the definition is not so obvious in a multidimensional setting for such low regularity potentials. A longstanding question was then to understand how some additional regularity on the prescribed density $n$ can be transferred to $A$. We answer this question in the present work in the one-dimensional setting, and show in particular that $A$ is a function in $L^r$ provided $n(x)$ is in the Sobolev space $W^{2,r}$ (plus another condition). This result opens new perspectives for the characterization of the minimizer in the multidimensional case (that will be addressed elsewhere), as we have now a technique to construct sufficiently regular potentials $A$ in order to define the Hamiltonian $H+A$.  

Another interesting mathematical question is to define the quantum evolution \fref{liouville}. It is an important problem since all quantum fluid models are obtained as approximations of \fref{liouville}. With the persective of validating these models and investigating their accuracy, we then need to be able to construct solutions to \fref{liouville}. As we have already seen, the key point is the structure of the collision operator $Q$. As in Degond-Ringhofer's work, we will consider a collision operator of BGK type \cite{BGK}, that is 
$$
Q(\varrho)=\frac{1}{\tau} \big(\varrho_e[\varrho]-\varrho \big),
$$
where $\tau$ is a relaxation time, and $\varrho_e[\varrho]$ denotes the quantum local equilibrium obtained by resolution of the moment problem. We will consider the moment problem with just the density constraint since it is the only case where the minimizers were properly characterized. We then suppose that $\varrho_e[\varrho]$ is a minimizer of $F$ under the local constraint $n[\varrho_e]=n[\varrho]$, where $n[\varrho_e]$ and $n[\varrho]$ denote the local densities associated to $\varrho_e$ and $\varrho$, respectively. The main mathematical difficulty is then to understand the map $\varrho \mapsto \varrho_e[\varrho]$. It is non-linear and non-local, and can essentially be seen as an inverse problem: given a density $n[\varrho(t)]$ in an appropriate space, (i) can we construct a density operator $\varrho_e$, minimizing the free energy, which yields the local density $n[\varrho(t)]$ at each time $t>0$, and (ii) how does this minimizer depend on $n[\varrho]$ (and therefore on $\varrho$) in terms of regularity and continuity? We answer these questions and show in particular that the map $\varrho \mapsto \varrho_e[\varrho]$ is of H\"older regularity in some functional setting. This then allows us to construct a solution to \fref{liouville}. Note that the question of uniqueness of this solution remains an open problem.

The last problem we address in this work is the long time convergence to the equilibrium. As mentioned above, \fref{liouville} dissipates the free energy, and we show that the solutions converge for long time to minimizers of the free energy with a \textit{global density constraint}. We do not address the question of the rate of convergence, which may be a difficult problem even in the kinetic case.

We will work with the same setting as \cite{MP-JSP}, since it is the one in which the minimizer was characterized. The entropy function is then the Boltzmann entropy, and our domain is one-dimensional with periodic boundary conditions. These latter conditions allow us to dismiss potential boundary effects and to focus on the intrinsic difficulties of the problem. The one-dimensional limitation was addressed above, and we believe the techniques developed in this paper are sufficiently general to allow us to extend in the future the results to a multidimensional setting and to other entropy functions. 

To end this introduction, we would like to mention that many problems remain unsolved in Degond-Ringhofer's theory: characterization of the minimizers for several constraints in several dimensions, definition of the quantum evolution for such local equilibria, analysis of the fluid models, rigorous semi-classical and diffusion limits, etc...

The paper is structured as follows: in Section \ref{prelim}, we introduce  the functional setting and recall the main results of \cite{MP-JSP} on the moment problem; section \ref{main} is devoted to the main results of this work (Theorems \ref{regA}, \ref{thliou} and \ref{thliou2}) and an outline of their proofs. The proof of Theorem \ref{regA} is given in Section \ref{proofregA}, and additional important results on the moment problem are given in Section \ref{further}. Section \ref{proofthliou} is devoted to the proof of Theorem \ref{thliou} and Section \ref{proofthliou2} to the one of Theorem \ref{thliou2}. The proofs of some technical lemmas are given in Section \ref{secother}, and Section \ref{appen} is an appendix gathering some results of \cite{MP-JSP}.\\

\section{Preliminaries} \label{prelim}
We start with the functional framework and then recall the main results of \cite{MP-JSP} on the moment problem.

\subsection{Functional framework} \label{sect2}
We work with a one dimensional physical space and suppose that the particles are confined in the interval $[0,1]$ with periodic boundary conditions. We will denote by $L^r(0,1) \equiv L^r$, $r\in [1,\infty]$, the usual Lebesgue spaces of complex-valued functions, and by $W^{k,r}(0,1) \equiv W^{k,r}$, the standard Sobolev spaces. We will use the notations $\H^k=W^{k,2}$, and $(\cdot,\cdot)$ for the Hermitian product on $L^2$ with the convention $(f,g)=\int_0^1 \overline{f} g dx$. We then consider the Hamiltonian $$H=-\frac{d^2}{dx^2}$$ on the space $L^2$, equipped with the domain
\be D(H)=\left\{u\in \H^2(0,1):\,u(0)=u(1),\,\frac{du}{dx}(0)=\frac{du}{dx}(1)\right\}.\label{domainH} \ee
Note that we considered a free Hamiltonian for simplicity of the exposition, and that the results obtained in this work can straightforwardly be generalized to Hamiltonians with sufficiently regular potentials. The domain of the quadratic form associated to the Hamiltonian is
$$\Hunper=\left\{u\in \H^1(0,1): \,u(0)=u(1)\right\}.$$
Its dual space will be denoted $\H^{-1}_{per}$. Remark that one has the following identification, that will consistently be used in the paper:
\be
\forall u,v \in \Hunper,\qquad (\sqrt{H}u,\sqrt{H}v)=\left(\frac{du}{dx},\frac{dv}{dx}\right),\quad \|\sqrt{H}u\|_{L^2}=\left\|\frac{du}{dx}\right\|_{L^2}.
\label{ident}
\ee
We will use the notations $\nabla=d/dx$ and $d^2/dx^2=\Delta$ for brevity. We shall denote by $\calL(L^2)$ the space of bounded operators on $L^2(0,1)$,  by $\calK$ the space of compact operators on $L^2(0,1)$, by $\calJ_1 \equiv \calJ_1(L^2)$ the space of trace class operators on $L^2(0,1)$  and by $\calJ_2 \equiv\calJ_2(L^2)$ the space of Hilbert-Schmidt operators on $L^2(0,1)$. The inner product in $\calJ_2$ is defined by $(A,B)_{\calJ_2}= \Tr (A^* B)$. More generally, $\calJ_r$ will denote the Schatten space of order $r$. See e.g. \cite{RS-80-I,Simon-trace} for more details about these spaces. 

A density operator is defined as a nonnegative trace class, self-adjoint operator on $L^2(0,1)$. For $|\varrho|=\sqrt{\varrho^* \varrho}$, we introduce the following space:
$$\calE=\left\{\varrho\in \calJ_1,\mbox{ such that } \overline{\sqrt{H}|\varrho|\sqrt{H}}\in \calJ_1\right\},$$
where $\overline{\sqrt{H}|\varrho|\sqrt{H}}$ denotes the extension of the operator $\sqrt{H}|\varrho|\sqrt{H}$ to $L^2(0,1)$. We define in the same way
$$\calH=\left\{\varrho\in \calJ_1,\mbox{ such that } \overline{H |\varrho| H}\in \calJ_1\right\}. $$
We will drop the extension sign in the sequel for simplicity. The spaces $\calE$  and $\calH$ are Banach spaces when endowed with the norms
$$\|\varrho\|_{\calE}=\Tr |\varrho|+\Tr\big(\sqrt{H}|\varrho|\sqrt{H}\big), \qquad \|\varrho\|_{\calH}=\Tr |\varrho|+\Tr \big(H |\varrho| H\big),$$
where $\Tr$ denotes operator trace. The energy space will be the following closed convex subspace of $\calE$:
$$\calE_+=\left\{\varrho\in \calE:\, \varrho\geq 0\right\}.$$
Note that operators in $\calE_+$ are automatically self-adjoint since they are bounded and positive on the complex Hilbert space $L^2$. For any $\varrho\in \calJ_1$ with $\varrho=\varrho^*$, one can associate a real-valued local density $n[\varrho](x)$, formally defined by $n[\varrho](x)=\rho(x,x),$ where $\rho$ is the integral kernel of $\varrho$. The density $n[\varrho]$ can be in fact identified uniquely by the following weak formulation:
$$\forall \phi\in L^\infty(0,1),\quad \Tr \big(\Phi \varrho \big)=\int_0^1\phi(x)n[\varrho](x)dx,
$$
where, in the left-hand side, $\Phi$ denotes the multiplication operator by $\phi$ and belongs to $\calL(L^2(0,1))$. We list below some important relations involving the density $n[\varrho]$ and the spectral decomposition of $\varrho$ (counting multiplicity) that we denote by $(\varrho_k,\phi_k)_{k\in \NN^*}$. We have first
\begin{align}
&n[\varrho](x)=\sum_{k \in \NN^*}\rho_k|\phi_k(x)|^2,\qquad \|n[\varrho]\|_{L^1}\leq \sum_{k\in \NN^*} |\rho_k|=\Tr |\varrho|,\label{a5}\\
&\Tr \big(\sqrt{H}|\varrho|\sqrt{H}\big)=\|\sqrt{H}\sqrt{|\varrho|}\|^2_{\calJ_2}=\sum_{k \in \NN^*}|\rho_k|\left\|\nabla \phi_k \right\|_{L^2}^2,\label{a6}
\end{align}
where we assume $\varrho \in \calE$ in the last line. The Cauchy-Schwarz inequality then yields
\begin{align} \label{estW11}
&\left\| \nabla n[\varrho]\right\|_{L^1}\leq 2(\Tr |\varrho|)^{1/2}\left(\Tr \sqrt{H}|\varrho|\sqrt{H}\right)^{1/2}\leq \|\varrho\|_{\calE}\\
\label{sqrt}
&\left\|\nabla \sqrt{n[\varrho]}\right\|_{L^2}\leq \left(\Tr \sqrt{H}\varrho\sqrt{H}\right)^{1/2}, \qquad \textrm{if } \varrho \geq 0.
\end{align}
Throughout the paper, $C$ will denote a generic constant that might differ from line to line.
\subsection{The moment problem} For $\beta(x)=x \log x-x$ the Boltzmann entropy, we introduce the free energy $F$ defined on  $\calE_+$ as
\be
\label{F}
F(\varrho)= \Tr \big(\beta(\varrho)\big)+\Tr \big(\sqrt{H}\varrho\sqrt{H}\big).
\ee
The following theorem, proved in \cite{MP-JSP}, shows that for an appropriate given local density $n(x)$, $F(\varrho)$ admits a unique minimizer in $\calE_+$ under the constraint $n[\varrho]=n$. Moreover, this minimizer is characterized, and defined as a so-called  quantum Maxwellian.

\begin{theorem}  [\cite{MP-JSP} Solution to the moment problem]
\label{theo1}
Consider a density $n\in \Hunper$ such that $n(x)\geq \underline{n}>0$ on $[0,1]$. Then, the following minimization problem with constraint:
$$\min F(\varrho)\mbox{ for $\varrho\in \calE_+$ such that }n[\varrho]=n,
$$
where $F$ is defined by \fref{F}, is attained for a unique density operator $\varrho[n]$, which has the following characterization. We have
$$\varrho[n]=\exp\left(-(H+A)\right),
$$
where $A$ belongs to the dual space $\H^{-1}_{per}$ of $\Hunper$, is real-valued, and the operator $H+A$ is taken in the sense of the associated quadratic form
\be
\label{qu}
Q_{A}(\varphi,\varphi)=\left\|\nabla \varphi \right\|_{L^2}^2+(A, |\varphi|^2)_{\H^{-1}_{per},\Hunper}.
\ee
Moreover, the operator $\varrho[n]$ has a full rank, that is all of its eigenvalues $(\rho_p)_{p \in \NN^*}$ are strictly positive.
\end{theorem}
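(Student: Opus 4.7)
The plan is to use the direct method of the calculus of variations to obtain existence and uniqueness of a minimizer, and then to derive the Euler--Lagrange equation by perturbing around the minimizer; the form $\varrho[n]=\exp(-(H+A))$ will emerge because the constraint is local (prescribing the density pointwise), so the associated Lagrange multiplier $A$ is a function of $x$ living in the dual of the space containing $n$. The main obstacles will be (a) establishing lower semicontinuity of $F$ on $\calE_+$ along a minimizing sequence weakly convergent in $\calE$, and (b) identifying the regularity of the Lagrange multiplier so that $A\in \H^{-1}_{per}$ and $H+A$ can be given a meaning via its quadratic form \fref{qu}.

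For existence I would first exhibit an admissible test operator, e.g.\ a finite-rank density operator built from smooth eigenfunctions rescaled so that its density equals $n$ (using $n\geq\underline{n}>0$ and $n\in\Hunper$); since $F$ is finite on this operator, the infimum is finite. Given a minimizing sequence $(\varrho_j)\subset\calE_+$ with $n[\varrho_j]=n$, I would note that $\Tr\varrho_j=\int_0^1 n\,dx$ is fixed, hence $\beta(\varrho_j)\geq -\Tr\varrho_j$ is bounded below, and then $\Tr(\sqrt{H}\varrho_j\sqrt{H})$ is bounded, giving a uniform bound in $\calE$. By the Banach--Alaoglu theorem in a suitable weak-$*$ topology of $\calJ_1$ (with the kinetic-energy part providing the extra compactness needed to pass to the limit in the density), I would extract a subsequence converging to some $\varrho\in\calE_+$; continuity of $\varrho\mapsto n[\varrho]$ against $L^\infty$ test functions preserves the constraint, while convexity of $\beta$ plus Fatou-type arguments gives $F(\varrho)\leq\liminf F(\varrho_j)$. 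Uniqueness follows from the strict convexity of $x\mapsto x\log x-x$ combined with Klein's inequality (which makes $\varrho\mapsto\Tr\beta(\varrho)$ strictly convex on positive operators) and the convexity of the constraint set.

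For the characterization, I would compute directional derivatives of $F$ along admissible perturbations. For any bounded self-adjoint $\eta$ with $n[\eta]=0$ and such that $\varrho+t\eta\geq 0$ for small $t$, optimality of $\varrho$ yields
\[
\Tr\bigl((\log\varrho + H)\,\eta\bigr)=0.
\]
Because $\eta$ ranges over all density-zero perturbations and the linear form $\eta\mapsto\Tr((\log\varrho+H)\eta)$ must annihilate them, the operator $\log\varrho+H$ must act as a multiplication operator $-A(x)$ (this is the infinite-dimensional analogue of the Lagrange multiplier theorem for a single pointwise constraint parametrized by $x\in[0,1]$). Inverting this identity gives $\varrho=\exp(-(H+A))$. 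A convenient rigorous route is to introduce the dual/Legendre functional $n\mapsto J(n):=\min\{F(\varrho):n[\varrho]=n\}$, show by comparison arguments that it is convex and Gâteaux-differentiable on the set of admissible densities, and identify $A=J'(n)$.

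The regularity statement $A\in\H^{-1}_{per}$ is where one must work hardest. The plan is to estimate $A$ by duality: for any test $\mu\in\Hunper$ with $\int\mu\,dx=0$, perturb the density $n\to n+t\mu$, compare minimizers, and bound the resulting variation of $A$ against $\mu$ using the bound $\|\nabla\sqrt{n[\varrho]}\|_{L^2}\leq (\Tr\sqrt{H}\varrho\sqrt{H})^{1/2}$ from \fref{sqrt} together with the coercivity $n\geq\underline{n}$. The one-dimensional embedding $\Hunper\hookrightarrow L^\infty$ is crucial here: it makes $|\varphi|^2\in\Hunper$ for $\varphi\in\Hunper$, so the duality bracket in \fref{qu} is well-defined, and the quadratic form $Q_A$ is closed and bounded below (again by $\H^{-1}_{per}$--$\Hunper$ duality combined with Poincaré), legitimizing the definition of $H+A$ and hence of $\exp(-(H+A))$ by functional calculus. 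Finally, full rank of $\varrho[n]$ is immediate from the exponential form, since $\exp(-(H+A))$ has strictly positive eigenvalues whenever $H+A$ is self-adjoint and bounded below on a dense domain.
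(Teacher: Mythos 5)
Note first that the paper does not prove Theorem \ref{theo1}; it is quoted from \cite{MP-JSP}, so the comparison must be against that reference, whose ingredients are visible in the present paper, e.g.\ the explicit weak formula \fref{defAA} for $A$, the compactness Lemma \ref{strongconv}, and the estimates of Proposition \ref{severalestimates}.

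Your outline gets the high-level architecture right — direct method, strict convexity, Lagrange multiplier attached to the pointwise constraint — and the existence/uniqueness part is essentially the same argument as in \cite{MP-JSP} (the compactness ingredient you gesture at is precisely Lemma \ref{strongconv}, which upgrades boundedness in $\calE_+$ to strong $\calJ_1$ convergence, so testing $n[\cdot]$ against $L^\infty$ functions does pass to the limit).

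The genuine gap is in the characterization. Your Euler–Lagrange step, "perturb $\varrho\to\varrho+t\eta$ with $n[\eta]=0$ and conclude $\log\varrho+H$ is a multiplication operator," runs into exactly the two obstructions that the introduction flags as making this step "actually a difficult task, much harder than proving the existence of the minimizers." First, $\varrho$ is trace-class, so its eigenvalues accumulate at $0$; the perturbations $\eta$ that keep $\varrho+t\eta\geq 0$ for small $t$ are essentially confined to spectral subspaces where $\varrho$ is bounded below, and one cannot a priori test against a rich enough family to identify the multiplier. Second, $\log\varrho+H$ is a sum of two unbounded operators with no obvious common core, and the clean duality argument ("$\Tr(B\eta)=0$ for all density-free $\eta$ forces $B$ to be multiplication") is only immediate for bounded $B$ in $\calJ_2$. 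The proof in \cite{MP-JSP} bypasses a naive optimality condition altogether: it defines $A$ directly by the regularized spectral formula \fref{defAA}, shows that this defines an element of $\H^{-1}_{per}$ via the estimate \fref{estimsolmom2}, and only then verifies $\varrho[n]=\exp(-(H+A))$ a posteriori.

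Your full-rank argument is also circular as written: you deduce full rank from the exponential form, but the exponential form is obtained by writing down $\log\varrho$, which already presupposes full rank. In the actual proof the order is reversed — full rank is proved first by a competitor/contradiction argument (if $\varrho$ had a nontrivial kernel one could lower $F$ while preserving the constraint), and only afterwards is $\log\varrho$ well-defined and the representation of $A$ extracted. Finally, the $\H^{-1}_{per}$ regularity of $A$ does not follow from a soft Legendre-duality sketch; it requires the quantitative bound coming out of \fref{defAA}, controlled by $\Tr|\varrho\log\varrho|$ and $\Tr(\sqrt H\varrho\sqrt H)$ together with $n\geq\underline n>0$, which is what Proposition \ref{severalestimates} encodes.
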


It is shown in addition in \cite{MP-JSP} that the chemical potential $A$ of Theorem \ref{theo1} is defined by the relation
\be
\left(A,\psi\right)_{\H^{-1}_{per},\Hunper}=-\Tr\left(\frac{\psi}{n} (\varrho\log \varrho)\right)-\sum_{p\in\NN^*}\left((\sqrt{H}\sqrt{\varrho})\phi_p,\sqrt{H}\left(\frac{\psi}{n}\sqrt{\varrho}\phi_p\right)\right)_{L^2}\label{defAA},
\ee
where $(\phi_p)_{p \in \NN^*}$ is the set of eigenfunctions of the minimizer $\varrho[n]$. For the analysis of the Liouville equation, some estimates on $\varrho[n]$ and the Lagrange parameter $A$ in terms of the data of the minimization problem, namely the density $n$, are necessary. This question is addressed in the following proposition, proved in Section \ref{proofseveral}.

\begin{proposition} (Estimates for the solution to the moment problem)\label{severalestimates} Under the conditions of Theorem \ref{theo1}, the minimizer $\varrho[n]$ and the chemical potential $A$ verify the estimates:
\begin{align} \label{estimsolmom1}
&\Tr \big(|\varrho[n] \log \varrho[n]|\big)+ \| \varrho[n] \|_{\calE} \leq C \mathfrak{H}_0(n) \\\label{estimsolmom2}
&\|A\|_{\H^{-1}_{per}} \leq C \mathfrak{H}_1(n),
\end{align} 
where the functions $\mathfrak{H}_0$ and $\mathfrak{H}_1$ are defined by
\begin{align*}
&\mathfrak{H}_0(n)=1+\beta(\|n\|_{L^1})+\| \sqrt{n}\|^2_{\H^1}\\
&\mathfrak{H}_1(n)=\left(1+\|\sqrt{n}\|_{\H^1}/\sqrt{\underline{n}} \right)\mathfrak{H}_0(n)/\underline{n},
\end{align*}
and $C$ is a constant independent of $n$.
\end{proposition}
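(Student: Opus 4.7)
The strategy is to bound $F(\varrho[n])$ from above via a rank-one trial operator, to extract the individual estimates from the explicit characterization $\varrho[n]=\exp(-(H+A))$, and finally to control $A$ via the formula \fref{defAA}.

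First, I would consider the trial operator $\varrho_0\varphi:=(\sqrt n,\varphi)\sqrt n$. It lies in $\calE_+$, has $\Tr\varrho_0=\|n\|_{L^1}$, satisfies $n[\varrho_0]=n$, has a unique nontrivial eigenvalue $\|n\|_{L^1}$, and a direct computation gives $F(\varrho_0)=\beta(\|n\|_{L^1})+\|\nabla\sqrt n\|^2_{L^2}\leq \mathfrak{H}_0(n)$; by minimality, $F(\varrho[n])\leq\mathfrak{H}_0(n)$. To isolate $\Tr\sqrt H\varrho[n]\sqrt H$ in $F$ (since $\Tr\beta(\varrho[n])$ can be negative), I would compare $\varrho[n]$ with the Gibbs state $\sigma=c\,e^{-H/2}$, with $c>0$ chosen so that $\Tr\sigma=\|n\|_{L^1}$. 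The Gibbs inequality $\Tr\varrho[n]\log\varrho[n]\geq\Tr\varrho[n]\log\sigma$ then yields, after rearrangement,
$$\frac{1}{2}\Tr\sqrt H\varrho[n]\sqrt H \leq F(\varrho[n])+(1+|\log c|)\|n\|_{L^1}\leq C\mathfrak{H}_0(n),$$
using that $\Tr e^{-H/2}$ is a fixed finite constant in the 1D periodic setting and that $\|n\|_{L^1}|\log \|n\|_{L^1}|\leq C\mathfrak{H}_0(n)$. Together with $\Tr\varrho[n]=\|n\|_{L^1}$, this yields $\|\varrho[n]\|_\calE\leq C\mathfrak{H}_0(n)$, and one also extracts the matching lower bound $F(\varrho[n])\geq -C\mathfrak{H}_0(n)$.

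For the entropy estimate, I would exploit the identity $\log\varrho[n]=-(H+A)$ and the commutation of the two operators: $\Tr|\varrho[n]\log\varrho[n]|=\Tr(|H+A|\,\varrho[n])$. Writing $|H+A|=(H+A)+2(H+A)_-$ and noting that $\Tr((H+A)\varrho[n])=-\Tr(\varrho[n]\log\varrho[n])$ is already controlled via $F(\varrho[n])$, $\Tr\sqrt H\varrho[n]\sqrt H$ and $\Tr\varrho[n]$, only the negative spectral part $\Tr((H+A)_-\varrho[n])$ remains. This is the main obstacle of the proof, and I would handle it via the elementary operator-norm inequality $\|\varrho[n]\|_{\calL(L^2)}\leq\Tr\varrho[n]=\|n\|_{L^1}$ applied to $\varrho[n]=\exp(-(H+A))$, which yields the spectral lower bound $H+A\geq-\max(\log\|n\|_{L^1},0)$ and hence $\Tr((H+A)_-\varrho[n])\leq\|n\|_{L^1}\max(\log\|n\|_{L^1},0)\leq C\mathfrak{H}_0(n)$.

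Finally, I would bound $\|A\|_{\H^{-1}_{per}}$ by testing \fref{defAA} against $\psi\in\Hunper$. The 1D embedding $\Hunper\hookrightarrow L^\infty$ combined with $n\geq\underline n$ gives $\|\psi/n\|_{L^\infty}\leq C\|\psi\|_{\H^1}/\underline n$, so the first term of \fref{defAA} is bounded by $\|\psi/n\|_{L^\infty}\Tr|\varrho[n]\log\varrho[n]|\leq C\|\psi\|_{\H^1}\mathfrak{H}_0(n)/\underline n$. For the second, Cauchy--Schwarz in $p\in\NN^*$, the identity $\|\sqrt H u\|_{L^2}=\|\nabla u\|_{L^2}$, and the product rule on $\nabla(\frac{\psi}{n}\sqrt{\varrho[n]}\phi_p)$, together with $\sum_p\rho_p|\phi_p|^2=n$ and $\sum_p\rho_p\|\nabla\phi_p\|^2_{L^2}=\Tr\sqrt H\varrho[n]\sqrt H$, reduce the sum to terms of the form $\int_0^1 n\,|\nabla(\psi/n)|^2\,dx$ and $\|\psi/n\|_{L^\infty}^2\Tr\sqrt H\varrho[n]\sqrt H$. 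A direct estimate of $\nabla(\psi/n)$ using $n\geq\underline n$, together with $\|\nabla n\|_{L^2}\leq 2\|\sqrt n\|_{L^\infty}\|\nabla\sqrt n\|_{L^2}\leq C\|\sqrt n\|^2_{\H^1}$, then produces the weight $1+\|\sqrt n\|_{\H^1}/\sqrt{\underline n}$, yielding the stated bound $\|A\|_{\H^{-1}_{per}}\leq C\mathfrak{H}_1(n)$.
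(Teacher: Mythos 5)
Your proof is correct, but you take a genuinely different route to the first estimate. Both proofs start by comparing $F(\varrho[n])$ with the rank-one trial $\nu=\|n\|_{L^1}^{-1}\sqrt n(\sqrt n,\cdot)\sqrt n\cdot\|n\|_{L^1}$ to get $F(\varrho[n])\leq\mathfrak{H}_0(n)$; after that, however, the paper extracts $\Tr\sqrt H\varrho[n]\sqrt H$ by invoking the sublinear lower bound on the entropy, $-\Tr\beta(\varrho[n])\leq C(\Tr\sqrt H\varrho[n]\sqrt H)^{1/2}$, which rests on the Lieb--Thirring-type Lemma~\ref{lieb} (and is the same mechanism as \fref{souslin}), followed by Young's inequality. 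You instead compare $\varrho[n]$ with the normalized Gibbs state $\sigma=c\,e^{-H/2}$ and use the nonnegativity of the relative entropy $S(\varrho[n],\sigma)\geq0$; this is a softer argument that avoids the explicit eigenvalue counting of Lemma~\ref{lieb}, at the price of handling the normalization constant $\log c\sim\log\|n\|_{L^1}$ (which you correctly absorb into $\mathfrak{H}_0$), and as a byproduct it also hands you a matching lower bound on $F(\varrho[n])$, which you then put to work. For the entropy term, the two proofs coincide in spirit — both isolate the eigenvalues $\rho_p>1$ and bound them via $\rho_p\leq\|n\|_{L^1}$ — but whereas the paper again uses the Lieb--Thirring bound for the remaining small eigenvalues, you instead read off $-\Tr(\varrho[n]\log\varrho[n])=\Tr((H+A)\varrho[n])$ from the characterization $\log\varrho[n]=-(H+A)$ and control it through the already-established bounds on $F$, $\Tr\varrho[n]$ and $\Tr\sqrt H\varrho[n]\sqrt H$. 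The estimate of $\|A\|_{\H^{-1}_{per}}$ is handled essentially as in the paper, by testing the representation formula \fref{defAA}; one should only check (as you implicitly do) that the $\|\sqrt n\|^2_{\H^1}/\underline n^{3/2}$ factor arising from $\nabla(\psi/n)$ is absorbed into $\mathfrak{H}_1(n)$, which follows from $\underline n\leq\|n\|_{L^1}\leq\|\sqrt n\|^2_{\H^1}\leq\mathfrak{H}_0(n)$. In short: your Gibbs-inequality route is an equally valid and arguably cleaner alternative to the paper's Lieb--Thirring argument.
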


\section{Main results} \label{main}

We state in this section the main results of the paper. The first theorem concerns the regularity of the chemical potential $A$.
\begin{theorem} [Regularity for the chemical potential] \label{regA}
Consider a density $n\in \Hunper$ such that $n\geq \underline{n}>0$ on $[0,1]$. Let $A$ be the chemical potential associated with the minimizer $\varrho[n]$ of Theorem \ref{theo1}. Then $\Delta n \in W^{k,r}(0,1)$ implies $A \in W^{k,r}(0,1)$, for $r \in [1,\infty]$. When $k=0$ and $r=2$, we have in particular the estimate 
$$
\|A\|_{L^2} \leq \frac{C}{\underline{n}} \left(\mathfrak{H}_0(n)\left(1+\frac{1}{\underline{n}} \left(\| \Delta n \|_{L^2}+\mathfrak{H}_0(n)\right)\right)+\exp\left( C (\mathfrak{H}_1(n))^4\right) \right),
$$
where $C$ is a positive constant independent of $n$ and the functions $\mathfrak{H}_0$ and $\mathfrak{H}_1$ are defined in Proposition \ref{severalestimates}.
\end{theorem}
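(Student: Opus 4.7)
The plan is to derive an explicit pointwise formula for the chemical potential $A$ using the variational characterization \fref{defAA} and the spectral expansion of $\varrho[n]$, and then to control each term of this formula by eigenfunction estimates. Let $\varrho[n]=\sum_{p\in\NN^*}\rho_p\,|\phi_p\rangle\langle\phi_p|$ be the spectral decomposition, with $\rho_p=e^{-\lambda_p}$ and $(H+A)\phi_p=\lambda_p\phi_p$ in the quadratic-form sense (so $-\phi_p''+A\phi_p=\lambda_p\phi_p$ weakly). Starting from \fref{defAA} tested against a smooth real $\psi$, using $\sqrt{\varrho}\phi_p=\sqrt{\rho_p}\phi_p$ and the isometry \fref{ident}, I would expand
\[
\sum_p \rho_p\,(\sqrt{H}\phi_p,\sqrt{H}(\tfrac{\psi}{n}\phi_p))_{L^2}
= \int \tfrac{\psi}{n}\,\tau\,dx + \tfrac{1}{2}\int \nabla n\cdot\nabla\!\left(\tfrac{\psi}{n}\right)dx,
\]
where $\tau(x):=\sum_p \rho_p|\nabla\phi_p(x)|^2$ is the local kinetic energy density, and the $p$-sum collapses via $\nabla n=\sum_p \rho_p\nabla|\phi_p|^2$. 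Identifying $\Tr(\tfrac{\psi}{n}\varrho\log\varrho)=\int\tfrac{\psi}{n}\,e\,dx$ with $e(x):=\sum_p\rho_p\log\rho_p\,|\phi_p(x)|^2$, and integrating by parts the $\nabla n\cdot\nabla(\psi/n)$ term, yields the distributional identity
\[
A(x)\,n(x) = \tfrac{1}{2}\Delta n(x) - e(x) - \tau(x).
\]
Since $n\geq\underline{n}>0$, this reduces the regularity question for $A$ to one for $\Delta n$, $e$ and $\tau$.

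The technical heart of the proof is then an $L^r$ bound on $e$ and $\tau$. Using only the a priori $\|A\|_{\H^{-1}_{per}}\leq C\mathfrak{H}_1(n)$ from Proposition \ref{severalestimates}, I would combine the quadratic-form relation $\lambda_p=\|\nabla\phi_p\|_{L^2}^2+(A,|\phi_p|^2)_{\H^{-1}_{per},\Hunper}$ with the one-dimensional Sobolev product estimate $\||\phi_p|^2\|_{\Hunper}\leq C\|\phi_p\|_{\Hunper}^2$ and the embedding $\Hunper\hookrightarrow L^\infty$. A bootstrap on the resulting inequality should produce a per-mode bound of the shape
\[
\|\phi_p\|_{L^\infty}^2+\|\nabla\phi_p\|_{L^2}^2 \leq P(\lambda_p)\,\exp\!\bigl(C\,\mathfrak{H}_1(n)^\alpha\bigr)
\]
for some polynomial $P$ and some $\alpha>0$. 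The exponential decay of $\rho_p=e^{-\lambda_p}$ then absorbs any polynomial growth in $\lambda_p$ after summation over $p$, yielding $\|e\|_{L^\infty}+\|\tau\|_{L^\infty}\leq C\exp(C\,\mathfrak{H}_1(n)^4)$, which in particular gives the $L^2$ bounds needed in the theorem.

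Dividing the fundamental identity by $n$ and using $\|\Delta n/(2n)\|_{L^2}\leq\|\Delta n\|_{L^2}/(2\underline{n})$ together with the $\calE$-norm bound $\|\varrho[n]\|_{\calE}\leq C\mathfrak{H}_0(n)$ from Proposition \ref{severalestimates} then produces the quantitative $L^2$ estimate as stated, the first piece accounting for the $\mathfrak{H}_0(n)$-factor coming from $\Delta n/(2n)$ combined with the trace bound, and the exponential piece encoding the eigenfunction control. For the $W^{k,r}$ statement with $k\geq 1$, I would differentiate the identity $An=\Delta n/2-e-\tau$ and note that derivatives of $e$ and $\tau$ involve higher derivatives of the $\phi_p$, which in turn are controlled via the eigenvalue equation by the regularity of $A$ itself; this permits an induction in $k$ in which the already-established $W^{k-1,r}$ regularity of $A$ is recycled to close the estimate on the $k$-th derivative.

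The main obstacle will be the eigenfunction bound: extracting $L^\infty$ control of $\phi_p$ together with $L^2$ control of $\nabla\phi_p$, with an explicit constant in $\|A\|_{\H^{-1}_{per}}$ and $\lambda_p$ summable against $e^{-\lambda_p}$, from a potential $A$ of only $\H^{-1}_{per}$ regularity. The exponential $\exp(C\,\mathfrak{H}_1(n)^4)$ appearing in the theorem is a strong indication that this step must pass through some quantitative Gronwall-type iteration rather than a linear absorption, because the pairing $(A,|\phi_p|^2)$ cannot be controlled by a small multiple of $\|\nabla\phi_p\|_{L^2}^2$ when $\|A\|_{\H^{-1}_{per}}$ is large.
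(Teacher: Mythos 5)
Your approach is essentially the paper's: the identity $An = \tfrac{1}{2}\Delta n - e - \tau$ with $e=\sum_p\rho_p\log\rho_p\,|\phi_p|^2$ and $\tau=\sum_p\rho_p|\nabla\phi_p|^2$ is exactly Proposition \ref{expA}, the reduction to per-mode eigenfunction bounds summable against $\rho_p$ is the paper's strategy, and the $W^{k,r}$ induction via the eigenvalue equation is how the paper closes the general case. However, your diagnosis of the technical heart is off in a way that matters. You claim the exponential $\exp(C\,\mathfrak{H}_1(n)^4)$ signals a Gronwall-type iteration because ``the pairing $(A,|\phi_p|^2)$ cannot be controlled by a small multiple of $\|\nabla\phi_p\|_{L^2}^2$''. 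In fact the paper absorbs it by exactly the Young inequality you rule out: by Gagliardo--Nirenberg one has $\||\varphi|^2\|_{\H^1}\lesssim \|\varphi\|_{\H^1}^{1/2}+\|\varphi\|_{\H^1}^{3/2}$ for $\|\varphi\|_{L^2}=1$, so
$|(A,|\varphi|^2)|\leq \tfrac12\|\nabla\varphi\|_{L^2}^2 + C\|A\|^4_{\H^{-1}_{per}}+C$,
the cost being \emph{additive}, not multiplicative. Plugging this into the Rayleigh quotient gives the two-sided comparison $\tfrac12\lambda_p - C\|A\|^4 - C\leq \mu_p\leq\tfrac32\lambda_p+C\|A\|^4+C$ (with $\mu_p$ the eigenvalues of $H+A$, $\lambda_p=(2\pi p)^2$ those of $H$ --- note your ``$\rho_p=e^{-\lambda_p}$'' conflates these), hence $\rho_p=e^{-\mu_p}\leq Ce^{-\lambda_p/2}\exp(C\|A\|^4)$ and the exponential emerges from a single sum, with no iteration. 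One further structural point you leave implicit: since $\Tr\sqrt H\varrho\sqrt H<\infty$ and $\Tr|\varrho\log\varrho|<\infty$, both series in the identity converge absolutely in $L^1$ at no cost, so $A\in L^1$ is immediate; this seeds the bootstrap, because the eigenvalue equation $-\Delta\phi_p=(\mu_p-A)\phi_p$ then bounds $\|\Delta\phi_p\|_{L^1}$, which via Gagliardo--Nirenberg controls $\|\nabla\phi_p\|_{L^4}$ and upgrades to $A\in L^2$ (and $L^\infty$). Your per-mode bound of the shape $P(\lambda_p)\exp(C\mathfrak H_1^\alpha)$ is the right target, but the route to it is the Young-inequality eigenvalue comparison, not a Gronwall loop.
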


As mentioned in the introduction, Theorem \ref{regA} opens new perspectives for the characterization of the solutions to the moment problem in a multidimensional setting. Indeed, adapting the method of proof, we believe it is possible to construct a potential $A$ with sufficient regularity to define the Hamiltonian $H+A$ in dimensions greater than one.

Besides, it is shown formally in \cite{isotherme}, that in various asymptotic limits (semi-classical, and low-temperature), the potential $A$ behaves like the Bohm potential $\Delta \sqrt{n} /\sqrt{n}$. This means that the conditions on $n$ of Theorem \ref{regA} might not be optimal (we have two separate assumptions on $\Delta n$ and $n$, and not just one on $\Delta \sqrt{n} /\sqrt{n}$), but also that indeed some information about the second order derivatives of $n$ is required for $A$ to be a function and not a distribution.

Moreover, an important assumption in Theorem \ref{theo1} is that the density $n$ defining the constraint must be uniformly bounded from below by a positive constant. This is a sufficient condition of solvability, and we do not know at this point if the condition is necessary. In the Liouville equation, the moment problem has to be solved at each time in order to define $\varrho_e[\varrho(t)]$. This then requires the local density associated to $\varrho(t)$ to be uniformly bounded from below by a positive constant. Assuming this condition holds for the initial density operator, the lower bound can be propagated to the time dependent solution if the initial density operator has the form given below.

\begin{assumption}
\label{ass1} The initial condition is a density operator $\varrho^0$ that belongs to $\calH$, which can be decomposed into the sum of an operator function of the Hamiltonian $H$ and a perturbation, that is
$$
\varrho^0=f(H)+\delta \rho.
$$
We suppose moreover that $f(H) \in \calE_+$, that $\delta \rho \in \calE$, and that there exists a constant $\gamma>0$ such that
$$
n[f(H)](x) \geq \gamma \quad  \mbox{for all }x\in[0,1] \qquad \textrm{and} \qquad \|\delta \rho\|_{\calE} <\frac\gamma 2. 
$$
\end{assumption}

Assumption \ref{ass1} implies that that $n[\varrho^0]$ is uniformly bounded from below. Indeed, by linearity of the trace and denoting $\eps=\frac{2 \|\delta \rho\|_{\calE}}{\gamma}\in (0,1)$, we have
$$
n[\varrho^0]=n[f(H)]+n[\delta \rho] \geq (1-\eps)\gamma>0, \qquad \mbox{on } [0,1],
$$
since, according to \fref{estW11} and a Sobolev embedding,
$$
\|n[\delta \varrho]\|_{L^\infty} \leq \|n[\delta \rho]\|_{W^{1,1}} \leq 2 \| \delta \rho \|_{\calE}.
$$

We will construct classical solutions to the quantum Liouville-BGK equation that are defined as follows. Denote by $\calL(t)$ the $\calC^0$ group $t\mapsto \calL(t) \varrho=e^{-iHt} \varrho e^{iHt}$, which is an isometry on $\calJ_1$, $\calE$ and $\calH$. The group $\calL$ preserves Hermiticity and positivity. Its infinitesimal generator $L_0$ is given by, see \cite[Chapter 5, Th. 2]{dlen5},
\begin{align*}
&D(L_0)=\{ \varrho \in \calJ_1 \textrm{ such that } \varrho D(H)\subset D(H),\; H \varrho-\varrho H \textrm{ is an operator defined on }\\
&  \quad \qquad \qquad D(H) \textrm{ that can be extended to } L^2(0,1) \textrm{ to an operator } \overline{H \varrho-\varrho H} \in \calJ_1 \},\\
&L_0(\varrho)=-i (\overline{H \varrho-\varrho H}).
\end{align*}

A classical solution to the quantum Liouville-BGK equation is then a density operator 
$$\varrho \in \calC^0([0,T], D(L_0)), \qquad \textrm{with} \qquad \partial_t \varrho \in \calC^0([0,T], \calJ_1),$$
which verifies
\be \label{liouville2}
\partial_t \varrho=L_0(\varrho)-\frac{1}{\tau}(\varrho-\varrho_e[\varrho]), \quad \textrm{on } [0,T], \qquad \varrho(t=0)=\varrho^0,
\ee
where $\varrho_e[\varrho]$ is the (unique) solution to the moment problem with constraint $n[\varrho]$. 

Our second result is the following:
\begin{theorem} (Existence result)\label{thliou}
Under Assumption \ref{ass1}, there exists a classical solution to the quantum Liouville-BGK equation \fref{liouville2} such that, for any $T>0$,
$$\varrho \in \calC^0([0,T],\calH) \qquad  \textrm{and} \qquad \partial_t \varrho \in \calC^0([0,T],\calJ_1).$$
\end{theorem}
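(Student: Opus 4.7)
The plan is to recast \fref{liouville2} in mild (Duhamel) form and apply a fixed-point argument. Setting $R(t)=e^{-t/\tau}\calL(t)$ for the semigroup associated with the linear part of the equation, any classical solution must satisfy
\begin{equation*}
\varrho(t)=R(t)\varrho^0+\frac{1}{\tau}\int_0^t R(t-s)\,\varrho_e[\varrho(s)]\,ds,
\end{equation*}
and I would seek a fixed point of the corresponding map $\Phi$ in a closed convex subset $K_T$ of $\calC^0([0,T],\calH)$ whose elements have a local density uniformly bounded from below, so that $\varrho_e[\varrho(s)]$ is well defined via Theorem \ref{theo1} at every $s\in[0,T]$.

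The first step, the propagation of positivity, relies crucially on Assumption \ref{ass1}. Since $f(H)$ commutes with $H$ one has $\calL(t)f(H)=f(H)$, hence
\begin{equation*}
n[\Phi(\varrho)(t)]=e^{-t/\tau}\bigl(n[f(H)]+n[\calL(t)\delta\rho]\bigr)+\frac{1}{\tau}\int_0^t e^{-(t-s)/\tau}n[\calL(t-s)\varrho_e[\varrho(s)]]\,ds.
\end{equation*}
The integral is nonnegative by positivity of $\varrho_e$, $n[f(H)]\geq\gamma$ by hypothesis, and $\|n[\calL(t)\delta\rho]\|_{L^\infty}\leq 2\|\delta\rho\|_\calE<\gamma$ by \fref{estW11} and a Sobolev embedding, yielding a uniform lower bound $n[\Phi(\varrho)(t)]\geq e^{-T/\tau}\gamma/2>0$ on $[0,T]$. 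This identifies the natural set $K_T$ on which to operate and is independent of the input $\varrho$.

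The second step gathers a priori and continuity estimates for the non-linear, non-local map $\varrho\mapsto\varrho_e[\varrho]$. Combining Proposition \ref{severalestimates} with Theorem \ref{regA} applied to $n=n[\varrho]$, whose $\H^2$-regularity is controlled by $\|\varrho\|_\calH$ via the spectral formula $n[\varrho]=\sum_k\rho_k|\phi_k|^2$, gives an $L^r$-bound on the chemical potential $A$ and hence, through functional calculus on $\exp(-(H+A))$, a bound $\|\varrho_e[\varrho]\|_\calH\leq C(\gamma,\|\varrho\|_\calH)$. Tuning the radius of $K_T$ accordingly makes $\Phi$ map $K_T$ into itself. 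A continuity estimate of the form
\begin{equation*}
\|\varrho_e[\varrho_1]-\varrho_e[\varrho_2]\|_{\calJ_1}\leq \omega\bigl(\|\varrho_1-\varrho_2\|_{\calJ_1}\bigr),
\end{equation*}
at best H\"older in view of the implicit definition of $\varrho_e$, should follow from the dual formula \fref{defAA} for $A$ combined with strict convexity of the free energy on operators with prescribed density.

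With these ingredients the conclusion follows from Schauder's fixed-point theorem applied to $\Phi$ on $K_T$, equipped with the $\calC^0([0,T],\calJ_1)$ topology: the $\calH$-bound combined with Arzel\`a--Ascoli (equicontinuity in time coming from the strong continuity of $\calL$ on $\calH$) yields relative compactness of $\Phi(K_T)$, while the above modulus of continuity yields continuity of $\Phi$; uniqueness is not claimed, consistently with the introductory remark. The resulting mild solution is then upgraded to a classical one through standard semigroup theory: since $s\mapsto\varrho_e[\varrho(s)]$ lies in $\calC^0([0,T],\calH)$, the Duhamel integral produces a trajectory in $\calC^0([0,T],D(L_0))$ with $\partial_t\varrho\in\calC^0([0,T],\calJ_1)$. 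The principal obstacle will be the continuity estimate for $\varrho\mapsto\varrho_e[\varrho]$: it requires quantitative control of how a perturbation of the local density propagates to the chemical potential (through Theorem \ref{regA}) and then to the operator $\exp(-(H+A))$, and is likely the most technical piece of the argument.
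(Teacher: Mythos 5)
Your proposal is essentially correct and isolates exactly the right ingredients; the paper's actual proof realizes the same compactness philosophy, but by an explicit Cauchy--Peano type iteration rather than Schauder. Concretely, the paper defines the sequence $\varrho_{k+1}$ as the (unique, linear) solution of $\partial_t\varrho_{k+1}=L_0(\varrho_{k+1})-\tau^{-1}(\varrho_{k+1}-\varrho_e[\varrho_k])$, proves uniform bounds and a uniform H\"older-$1/2$ modulus in $\calE$, extracts a subsequence by Arzel\`a--Ascoli, and then identifies the limit; Schauder on your set $K_T$ is a natural repackaging of this and would rest on the same lemmas. Two technical refinements in the paper are worth flagging so that your sketch actually closes. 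First, the continuity estimate for $\varrho\mapsto\varrho_e[\varrho]$ is obtained in $\calJ_2$, not $\calJ_1$: the paper proves $\|\varrho_e[\varrho_1]-\varrho_e[\varrho_2]\|_{\calJ_2}\leq C_M\|\varrho_1-\varrho_2\|_{\calJ_2}^{1/8}$ (Corollary \pref{corcont}) via the two Klein inequalities and the identity $F(\varrho_i)=-(A_i+1,n_i)$, i.e.\ through the convexity of $F$ and the Maxwellian characterization rather than directly from \fref{defAA}; continuity of $t\mapsto\varrho_e[\varrho(t)]$ in $\calJ_1$ is then recovered only by a further compactness argument ($\varrho_e$ bounded in $\calH$, precompact in $\calJ_1$, limit identified in $\calJ_2$), so you would need the same two-step argument to get continuity of $\Phi$ for the $\calC^0([0,T],\calJ_1)$ topology. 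Second, equicontinuity is not an abstract consequence of strong continuity of $\calL$ on $\calH$: it uses the quantitative bound $\|\calL(t)\varrho-\varrho\|_{\calJ_1}\leq Ct\|\varrho\|_{\calH}$ (Lemma \pref{hold0}) and an interpolation lemma to obtain a uniform H\"older modulus in $\calE$. Finally, to have $\Phi(K_T)\subset K_T$ for arbitrary $T$ it is crucial that the $\calH$-bound on $\varrho_e[\varrho]$ be \emph{sublinear}, $\|\varrho_e[\varrho]\|_{\calH}\leq C(1+\|\varrho\|_{\calH})$, which follows from Theorem \pref{regA}, Lemma \pref{H2n} and Proposition \pref{regmin}; with that precision your ``tuning the radius of $K_T$'' step goes through.
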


It can be noticed that we require the regularity $\varrho^0 \in \calH$ and not just $\varrho^0 \in D(L_0) \supset \calH $, and that this regularity is propagated to the solution $\varrho$. This is related to the resolution of the moment problem, in particular to the regularity of the chemical potential $A$. The space $\calH$ seems to be a fairly natural framework to define the local equilibrium $\varrho_e[\varrho(t)]$. Indeed, as mentioned before, if we want to work with a potential $A$ as a function in $L^r$ and not as a distribution, some information about the second derivative of $n$ is required.  This will be confirmed in the representation formula of Proposition \ref{expA}. Since the fact that $\varrho(t) \in \calE_+$ only allows us to control the first derivative of $n$, more regularity on $\varrho(t)$ is needed. We will see in Lemma \ref{H2n} that $\varrho(t) \in \calH$ implies that $\Delta n[\varrho(t)] \in L^2$, and it is therefore natural to consider density operators in $\calH$. It does not seem obvious to work with less regular $\varrho$ while still preserving the condition $\Delta n[\varrho]  \in L^2$.


Our last result concerns the long-time behavior of the solutions to \fref{liouville2}. The BGK collision operator is precisely designed so as to dissipate the free energy $F$, it is therefore expected for the solutions to converge to a minimizer of $F$. This is what is shown in Theorem \ref{thliou2}, with the remark that the density constraint is now a \textit{global} constraint. 

\begin{theorem} \label{thliou2} (Convergence to the equilibrium) Denote by $\varrho_g$ the unique solution to the following minimization problem with global density constraint,
$$\min F(\varrho)\mbox{ for $\varrho\in \calE_+$ such that }\|n[\varrho]\|_{L^1}=\|n[\varrho^0]\|_{L^1}.
$$
Above, we recall that $\varrho^0$ is the initial density operator, satisfying Assumption \ref{ass1}.
Then, there exists $\eps>0$ such that $F(\varrho^0)-F(\varrho_g)<\eps$ implies that any solution $\varrho(t)$ to the quantum Liouville-BGK equation with $\varrho(0)=\varrho^0$ converges to $\varrho_g$ as $t \to \infty$ strongly in $\calJ_1$.
\end{theorem}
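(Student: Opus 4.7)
The plan rests on the free-energy dissipation built into the BGK collision, a LaSalle-type invariance argument, and a Radon-Riesz upgrade. First, I would differentiate $F$ along a classical solution to \fref{liouville2}. Cyclicity of the trace makes $L_0$ conservative for $F$, and the local constraint $n[\varrho_e[\varrho]]=n[\varrho]$ kills the contribution of the chemical potential $A$ coming from the representation $\log\varrho_e[\varrho]=-(H+A)$, yielding
\begin{equation*}
\frac{d}{dt}F(\varrho(t))=-\frac{1}{\tau}\Tr\bigl[(\varrho-\varrho_e[\varrho])(\log\varrho-\log\varrho_e[\varrho])\bigr]\le 0,
\end{equation*}
by Klein's inequality. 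Mass is conserved, $\Tr\varrho(t)=\Tr\varrho^0$, so $\varrho_g$ is admissible in the global minimization and $F(\varrho_g)\le F(\varrho(t))\le F(\varrho^0)$. Integrating in time,
\begin{equation*}
\int_0^{\infty}\Tr\bigl[(\varrho(s)-\varrho_e[\varrho(s)])(\log\varrho(s)-\log\varrho_e[\varrho(s)])\bigr]\,ds\le \tau\bigl(F(\varrho^0)-F(\varrho_g)\bigr)<\infty,
\end{equation*}
and a uniform $\calE_+$ bound on $\varrho(t)$ follows from the bound on $F$ together with the conserved mass. The smallness of $F(\varrho^0)-F(\varrho_g)$ enters here: via a stability estimate around $\varrho_g$ (whose density is strictly positive), it forces $n[\varrho(t)]$ to stay uniformly bounded from below in $t$, so that Theorem \ref{theo1} applies at every time and $\varrho_e[\varrho(t)]$ is well defined throughout.

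From the integrability above I would extract $t_n\to\infty$ along which the dissipation vanishes and, by weak compactness in $\calE$, a subsequence with $\varrho(t_n)\rightharpoonup\varrho_\infty$ and $\Tr\varrho_\infty=\Tr\varrho^0$. Continuity of the moment map $\varrho\mapsto\varrho_e[\varrho]$, established en route to Theorem \ref{thliou}, allows passage to the limit in the dissipation, and the equality case of Klein's inequality yields $\varrho_\infty=\varrho_e[\varrho_\infty]$; Theorem \ref{theo1} then provides $\varrho_\infty=\exp(-(H+A_\infty))$ for some real $A_\infty$. A LaSalle invariance step, relying on continuous dependence of the Liouville-BGK flow on its initial data in $\calE$, shows that the orbit issued from $\varrho_\infty$ stays in the zero-dissipation set, so along it \fref{liouville2} reduces to $\partial_t\varrho=L_0(\varrho)$, giving $\varrho(t)=e^{-iHt}\varrho_\infty e^{iHt}$. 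Imposing $\varrho(t)=\exp(-(H+A(t)))$ at every time forces $e^{-iHt}A_\infty e^{iHt}=A(t)$ to act as a multiplication operator for all $t$, which requires $[H,A_\infty]=0$. Since $A_\infty$ is a multiplication operator and $H=-\Delta$ on $\Hunper$, this forces $A_\infty$ to be constant; the conserved total mass fixes its value uniquely, so $\varrho_\infty=\varrho_g$. Uniqueness of the limit point yields weak convergence of the entire trajectory to $\varrho_g$.

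The upgrade to strong $\calJ_1$-convergence exploits nonnegativity: $\|\varrho(t)\|_{\calJ_1}=\Tr\varrho(t)=\Tr\varrho_g=\|\varrho_g\|_{\calJ_1}$ is conserved, and for nonnegative trace-class operators, weak convergence combined with convergence of the trace norms implies $\calJ_1$-convergence (a standard Radon-Riesz type property). The step I expect to be the main obstacle is the LaSalle argument: rigorously justifying invariance of the $\omega$-limit set in this nonlinear infinite-dimensional setting requires both continuous dependence of the Liouville-BGK flow on initial data in $\calE$ and the Hölder continuity of $\varrho\mapsto\varrho_e[\varrho]$ announced in the introduction and proved alongside Theorem \ref{thliou}. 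The smallness assumption $F(\varrho^0)-F(\varrho_g)<\eps$ plays a dual role: it keeps $n[\varrho(t)]$ away from zero so the moment problem remains solvable at every time, and it confines the trajectory to a neighborhood of $\varrho_g$ in which the continuity estimates are uniform.
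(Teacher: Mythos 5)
Your overall structure (free-energy dissipation, extraction of a limit along time sequences, identification of the limit as a local Maxwellian, reduction to a free Liouville dynamics, then rigidity to force the constant potential) matches the paper's strategy, but two steps in your proposal have genuine gaps that the paper's proof is specifically engineered to avoid.

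The first gap is the LaSalle invariance step. You write that the orbit issued from $\varrho_\infty$ stays in the zero-dissipation set, and you correctly flag that this requires continuous dependence of the Liouville--BGK flow on initial data in $\calE$. But the paper only proves \emph{existence} (Theorem~\ref{thliou}) by a compactness method and explicitly states that \emph{uniqueness is an open problem}. Without uniqueness there is no flow, hence no continuous dependence, and the $\omega$-limit set invariance argument does not get off the ground. The paper sidesteps this entirely by time-shifting the \emph{actual} trajectory: it studies $\varrho_k(t) := \varrho(t + t_k)$ on a fixed compact interval $[0,T]$, derives uniform H\"older-in-time estimates in $\calJ_1$ (via the analogue \fref{hold00} of Lemma~\ref{hold0} in $\calE$ rather than $\calH$), and passes to the limit in the \emph{integral} form of the equation along a subsequence. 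The dissipation integrability kills the collision term in the limit, so $\varrho_\infty$ satisfies a weak free Liouville equation \fref{freeL} on $[0,T]$ by construction, with no appeal to re-solving the Cauchy problem from $\varrho_\infty$.

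The second gap is the continuity of the moment map you rely on to identify $\varrho_e[\varrho_\infty]$. You appeal to the continuity established ``en route to Theorem~\ref{thliou}'', i.e.\ Corollary~\ref{corcont}. But that corollary requires uniform $\calH$ bounds on the operators, precisely so that $\Delta n \in L^2$ and hence $A \in L^2$ via Theorem~\ref{regA}. In the long-time regime there is no uniform $\calH$ bound: the proof of Theorem~\ref{thliou} gives $\|\varrho(t)\|_{\calH} \le e^{Ct}(1 + \|\varrho^0\|_{\calH})$, which blows up as $t\to\infty$. The paper therefore only has $\varrho_\infty \in L^\infty((0,T),\calE)$ and $A_\infty(t)\in\H^{-1}_{per}$, and a substantial portion of the proof (Step~3, leading to \fref{estimreg2}) is devoted to extending Proposition~\ref{contsol} to $\H^{-1}_{per}$ potentials via a regularization $A_j^\eps \to A_j$ and convergence of forms/resolvents/heat semigroups in the sense of Kato. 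Your proposal skips this and cannot close the identification as written. A related point: with $A_\infty$ only in $\H^{-1}_{per}$, the passage from the weak free Liouville equation to constancy of $n[\varrho_\infty(t)]$ (your ``$[H,A_\infty]=0$'' step) also needs care; the paper does this by testing against the Fourier eigenbasis of $H$ and showing that $\sum_j \calL_2(s,\phi e_j, e_j) \to 0$, precisely because $A_\infty \varrho_\infty \in \calJ_1$ is \emph{not} available.

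Two smaller remarks. The differentiation of $F$ along the trajectory and the resulting dissipation identity are not immediate because $\log\varrho$ and $H$ are unbounded; the paper devotes Lemma~\ref{eqfreeE} and its proof (Section~\ref{prooffreeE}) to a careful double regularization ($\beta_\eta$ and $(\II+\eta\sqrt H)^{-1}$). Finally, your Radon--Riesz upgrade to strong $\calJ_1$ convergence is a valid route, but the paper obtains strong $\calJ_1$ convergence directly from Lemma~\ref{strongconv} (boundedness in $\calE_+$ yields strong $\calJ_1$ compactness), so no separate upgrade step is needed.
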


We would like to emphasize that the condition $F(\varrho^0)-F(\varrho_g)<\eps$ is only used in order to ensure that the local density $n[\varrho(t)]$ remains bounded from below by a positive constant at all times. 

\begin{remark} By Lemma \ref{propentropie}, there exists $\tilde \eps$ such that $\|\varrho^0-\varrho_g\|_{\calE}< \tilde \eps$ yields $F(\varrho^0)-F(\varrho_g)<\eps$. Moreover, we remark that $\varrho_g$ takes the form $Ce^{-H}=f(H)$ and that its density $n[\varrho_g]\equiv n_0$ is a strictly positive constant function. Therefore, it is clear that any $\varrho^0\in \calH$ satisfying $\|\varrho^0-\varrho_g\|_{\calE}<\min(\frac{n_0}{2},\tilde \eps)$ satisfies both Assumption \ref{ass1} and the assumption of Theorem \ref{thliou2}.
\end{remark}

\medskip
\ni
\textit{Outline of the proofs.} The main ingredient of the proof of Theorem \ref{regA} is the representation formula of Proposition \ref{expA} further. Starting from the weak definition of $A$ given in \fref{defAA}, the term $\Delta n$ is exhibited after well-chosen algebraic manipulations and integration by parts. The fact that $A \in L^1$ when $\Delta n \in L^1$ then allows us to obtain a fine characterization of the spectral elements of the minimizer $\varrho[n]$ and to obtain more regularity on $A$ (as well as on $\varrho[n]$, which is important for the Liouville equation) by bootstrapping.

Regarding Theorem \ref{thliou}, we start by constructing mild solutions that satisfy the integral equation
\be \label{mild0}
\varrho(t)=e^{-\frac{t}{\tau}}\calL(t) \varrho^0+1/\tau\int_0^t e^{-\frac{t-s}{\tau}}\calL(t-s) \varrho_e[\varrho(s)] ds.
\ee
The main difficulty is naturally to handle the non-linear non-local map $\varrho \mapsto \varrho_e[\varrho]$. We were not able to show that it is Lipschitz continuous and therefore could not apply standard fixed point theorems in Banach spaces. We then use a compactness method similar to the one of the proof of the Cauchy-Peano Theorem, which explains the lack of a uniqueness result. We define a sequence $(\varrho_k)_{k \in \NN}$ of linear solutions to \fref{mild0} and derive uniform estimates in $k$. A crucial one is a \textit{sublinear} estimate in the space $\calH$ that allows us to obtain global-in-time existence results. This estimate hinges upon the characterization of the minimizer as a quantum Maxwellian and the fact that $A \in L^2$. It then remains to pass to the limit in \fref{mild0}, which amounts to show the continuity of the map $\varrho \mapsto \varrho_e[\varrho]$ in the appropriate topology. Using the convexity of the free energy, along with various formulations of the Klein inequality, we show that the map is of H\"older regularity 1/8  in the space $\calJ_2$ (this is not an optimal exponent), which finally leads to the existence of solutions. As mentioned earlier, the uniqueness is an open question.

The proof of Theorem \ref{thliou2} is based on the introduction of the relative entropies between $\varrho$ and $\varrho_e[\varrho]$ and vice versa. This leads to an estimate that shows that the free energy is nonincreasing and that the collision term in \fref{liouville2} converges to zero. The situation is then similar to the one encountered in the long-time behavior of the solutions to kinetic equations  \cite{DesvillettesBGK, DesvillettesVillani}: the free energy is dissipated until $\varrho(t)$ becomes a \textit{local} quantum Maxwellian, \textit{i.e.} a solution to the moment problem with a local density constraint; the fact that this local quantum Maxwellian is actually a global quantum Maxwellian is then ensured by the free Liouville equation, which enables us to show that any local quantum Maxwellian solution to the free Liouville equation is necessarily a global one.

The rest of the paper is devoted to the proofs of our main theorems.
\section{Proof of Theorem \ref{regA}} \label{proofregA}

The key ingredient is the representation formula obtained in the proposition below. It will allow us to obtain the $L^r$ regularity of the chemical potential $A$ as well as some refined estimates on the spectral elements of the operator $H+A$. 

\begin{proposition} \label{expA}
Consider a density $n\in \Hunper$ such that $n>0$ on $[0,1]$ and denote by $\Delta n$ the Laplacian of $n$ in the distribution sense. Then $A \in \H^{-1}_{per}$ admits the expression
\be \label{repform}
A=-\frac{1}{n}\left(-\frac{1}{2}\Delta n+ \sum_{p\in\NN^*} \rho_p |\nabla \phi_p|^2+\sum_{p\in\NN^*} \left(\rho_p \log \rho_p \right) |\phi_p|^2\right),
\ee
where  $(\rho_p, \phi_p)_{p \in \NN^*}$ is the spectral decomposition of the minimizer $\varrho[n]$.
\end{proposition}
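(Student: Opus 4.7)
\textit{Proof plan.} The plan is to start from the weak characterization \eqref{defAA} of $A$ and, for an arbitrary real test function $\psi \in \Hunper$, use $\psi/n$ as a multiplier (which is legitimate since $n \geq \underline{n} > 0$ and $n \in \Hunper$, so $\psi/n \in \Hunper$) to compute both terms on the right-hand side explicitly in terms of the spectral decomposition $(\rho_p,\phi_p)$ of $\varrho[n]$, then recognize $\Delta n$ through an integration by parts.

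The first step is to unfold the trace term. Using $(\varrho \log \varrho)\phi_p = (\rho_p \log \rho_p)\phi_p$, the cyclicity of the trace yields
$$\Tr\!\left(\tfrac{\psi}{n}\,\varrho \log \varrho\right) = \sum_{p \in \NN^*} \rho_p \log \rho_p \int_0^1 \tfrac{\psi}{n}\,|\phi_p|^2\,dx.$$
For the second term, since $\sqrt{\varrho}\,\phi_p = \sqrt{\rho_p}\,\phi_p$ and using the identification \eqref{ident}, each summand becomes
$$\rho_p \big(\nabla \phi_p,\nabla(\tfrac{\psi}{n}\phi_p)\big)_{L^2} = \rho_p \int_0^1 \tfrac{\psi}{n}\,|\nabla \phi_p|^2\,dx + \rho_p \int_0^1 \nabla\!\big(\tfrac{\psi}{n}\big)\,\overline{\nabla \phi_p}\,\phi_p\,dx,$$
by the Leibniz rule. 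The first piece summed in $p$ directly produces the term $\sum_p \rho_p |\nabla \phi_p|^2$ of the claimed formula.

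The crucial step is to handle the cross-term. Since $A$ is real-valued and $\psi$ is real, only the real part of the cross-sum contributes; using the pointwise identity $2\,\Re\big(\overline{\nabla \phi_p}\,\phi_p\big) = \nabla |\phi_p|^2$ and the spectral representation $n = \sum_p \rho_p |\phi_p|^2$ (which converges in $\Hunper$ under our assumptions, by the summability estimates in \eqref{a5}--\eqref{a6} and Proposition \ref{severalestimates}), we obtain
$$\Re \sum_{p \in \NN^*} \rho_p \int_0^1 \nabla\!\big(\tfrac{\psi}{n}\big)\,\overline{\nabla \phi_p}\,\phi_p\,dx = \tfrac12 \int_0^1 \nabla\!\big(\tfrac{\psi}{n}\big)\,\nabla n\,dx = -\tfrac12 \big\langle \Delta n, \tfrac{\psi}{n}\big\rangle_{\H^{-1}_{per},\Hunper},$$
the last equality being simply the definition of $\Delta n \in \H^{-1}_{per}$ applied to the test function $\psi/n \in \Hunper$. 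Collecting all three contributions and factoring $\psi/n$ gives exactly
$$\big(A,\psi\big)_{\H^{-1}_{per},\Hunper} = \left\langle \tfrac{1}{n}\!\left(\tfrac12 \Delta n - \sum_p \rho_p |\nabla \phi_p|^2 - \sum_p \rho_p \log\rho_p\,|\phi_p|^2\right),\,\psi\right\rangle,$$
which is \eqref{repform}. By density of real-valued $\psi$ in $\Hunper$ and because both sides define elements of $\H^{-1}_{per}$, the identity extends to all test functions.

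The main obstacle is the convergence and exchange of sum and integral required to make the cross-term computation rigorous: one must justify that $\sum_p \rho_p \int \nabla(\psi/n)\,\overline{\nabla \phi_p}\,\phi_p\,dx$ converges absolutely, and that the termwise real part coincides with $\tfrac12 \int \nabla(\psi/n)\,\nabla n\,dx$. This is achieved by Cauchy--Schwarz in $p$ combined with the bounds $\sum_p \rho_p \|\nabla \phi_p\|_{L^2}^2 = \Tr(\sqrt{H}\varrho\sqrt{H}) < \infty$ (from $\varrho \in \calE$) and $\sum_p \rho_p \|\phi_p\|_{L^\infty}^2 \leq \|n\|_{L^\infty} < \infty$, after a standard truncation-and-limit argument; the $\Hunper$-convergence of the partial sums of $n = \sum_p \rho_p |\phi_p|^2$ (which follows from the same bounds via \eqref{estW11}) then legitimates replacing $\sum_p \rho_p \nabla |\phi_p|^2$ by $\nabla n$ inside the integral. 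Similar absolute convergence of $\sum_p \rho_p |\log \rho_p| \int |\psi/n||\phi_p|^2 dx$, which follows from $\Tr |\varrho \log \varrho| < \infty$ (given by \eqref{estimsolmom1}), controls the entropy term.
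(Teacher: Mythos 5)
Your proof is correct and follows the same overall strategy as the paper: start from the weak characterization \eqref{defAA}, identify the entropy term directly, and manipulate the quadratic-form term to exhibit $\Delta n$. The one genuine difference is in the technical execution of the quadratic-form term. The paper first truncates the sum and replaces each $\phi_p$ by a smooth periodic approximant $\phi_p^\eps$, so that $\Delta\phi_p^\eps$ is a function and the identity $\Delta n_{N,\eps}=2\sum\rho_p|\nabla\phi_p^\eps|^2+2\Re\sum\rho_p\overline{\Delta\phi_p^\eps}\,\phi_p^\eps$ can be invoked, then integrates by parts and passes $\eps\to 0$ and $N\to\infty$. You bypass the $\eps$-regularization entirely by staying at the $\H^1$ level: you split $\nabla(\tfrac{\psi}{n}\phi_p)$ via the Leibniz rule, use the a.e.\ identity $2\Re(\overline{\nabla\phi_p}\phi_p)=\nabla|\phi_p|^2$, and only integrate by parts at the very end against $n$. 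This is a cleaner organization of the same computation and avoids one limiting procedure; both versions still rely on the same absolute-summability bounds to pass $N\to\infty$.

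One small inaccuracy in your justification: to replace $\sum_p\rho_p\,\nabla|\phi_p|^2$ by $\nabla n$ inside the pairing with $\nabla(\psi/n)\in L^2$, you need the partial sums $\nabla n_N$ to converge to $\nabla n$ in $L^2$, not merely in $L^1$; but \eqref{estW11} only gives $W^{1,1}$ control. The $L^2$ convergence does hold, via $\|\nabla|\phi_p|^2\|_{L^2}\leq 2\|\nabla\phi_p\|_{L^2}\|\phi_p\|_{L^\infty}\leq C\|\phi_p\|_{\H^1}^{3/2}$ and then H\"older in $p$ (as you yourself invoke for the cross-sum), so the statement is right; just the citation to \eqref{estW11} is the wrong reference for the $\H^1$ claim.
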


Note that \fref{repform} leads to the equality
$$
\int_0^1 n(x) A(x) dx= - \Tr \big(\sqrt{H} \varrho[n] \sqrt{H}\big) - \Tr \big(\varrho[n] \log \varrho[n]\big)=-F(\varrho[n])- \int_0^1 n(x) dx, 
$$
which will be recovered in a different manner in Section \ref{further}.
\begin{proof}The proof consists in exhibiting $\Delta n$ in \fref{defAA}. For this, let us define $A_1$ and $A_2$ by, for all $\psi \in \Hunper$:
\bea
\left(A_1,\psi\right)_{\H^{-1}_{per},\Hunper}&:=&-\Tr\left(\frac{\psi}{n} (\varrho\log \varrho)\right)\nonumber ,\\
 \left(A_2,\psi\right)_{\H^{-1}_{per},\Hunper}&:=&-\sum_{p\in\NN^*}\left((\sqrt{H}\sqrt{\varrho})\phi_p,\sqrt{H}\left(\frac{\psi}{n}\sqrt{\varrho}\phi_p\right)\right)_{L^2}\label{defA2},
\eea
where $\varrho \equiv \varrho[n]$. The identification of $A_1$ is straightforward: since $\varrho \log \varrho \in \calJ_1$ according to \fref{estimsolmom1}, we can write
$$
\Tr\left(\frac{\psi}{n} (\varrho\log \varrho)\right)=\int_0^1 \frac{\psi}{n} n[\varrho\log \varrho] dx=\int_0^1 \frac{\psi}{n}\sum_{p\in\NN^*} (\rho_p \log \rho_p ) |\phi_p|^2 dx,
$$
where the sum converges in $L^1(0,1)$. This yields the last term in the expression of $A$. Regarding $A_2$, we first remark that it is real-valued since  $A$ and $A_1$ are real-valued. Choosing then  a real-valued test function $\psi$ and taking the real part of \fref{defA2} leads to
$$
 \left(A_2,\psi\right)_{\H^{-1}_{per},\Hunper}=-L\left(\psi\right):=-\Re \sum_{p\in\NN^*} \rho_p\left((\sqrt{H}\phi_p,\sqrt{H}\left(\frac{\psi}{n}\phi_p\right)\right)_{L^2}.
$$
In order to justify formal computations, we introduce the following regularized functional, for all $\psi \in \Hunper$:
$$
 L_{N,\eps}\left(\psi\right)=\Re \sum_{p=1}^N \rho_p\left((\sqrt{H}\phi^\eps_p,\sqrt{H}\left(\frac{\psi}{n}\phi^\eps_p\right)\right)_{L^2},
$$
where $\phi^\eps_p \in \calC^\infty_{per}([0,1])$ (the set of periodic $\calC^\infty$ functions with periodic derivatives) is such that $\phi^\eps_p \to \phi_p$ in $\H^1$ as $\eps \to 0$, for $p=1, \cdots, N$. It is not difficult to see that 
$$
\lim_{N \to \infty} \lim_{\eps \to 0 } L_{N,\eps}\left(\psi\right)=L\left(\psi\right), \qquad \forall \psi \in \Hunper.
$$
We then have
$$
 L_{N,\eps}\left(\psi\right)=\Re \sum_{p=1}^N \rho_p\left(H\phi^\eps_p,\left(\frac{\psi}{n}\phi^\eps_p\right)\right)_{L^2}=-\int_0^1 \frac{\psi}{n}  \, \Re \left(\sum_{p=1}^N \rho_p\overline{\Delta \phi^\eps_p} \phi_p^\eps  \right)dx.
$$
Define now
$$
n_{N,\eps}:=\sum_{p=1}^N \rho_p |\phi_p^\eps|^2,
$$
so that
 $$\Delta n_{N,\eps}= 2 \sum_{p=1}^N \rho_p |\nabla \phi^\eps_p|^2+ 2 \Re \left(\sum_{p=1}^N \rho_p \overline{\Delta \phi^\eps_p} \phi^\eps_p\right).$$
We can then recast $L_{N,\eps}$ as
$$
L_{N,\eps}\left(\psi\right)=\int_0^1  \frac{\psi}{n} \left(-\frac{1}{2}\Delta n_{N,\eps}+ \sum_{p=1}^N \rho_p |\nabla \phi^\eps_p|^2\right)dx.
$$
Owing to the inclusion $\Hunper \subset L^\infty(0,1)$, it is clear that $\nabla n_{N,\eps} \to \nabla n_{N} $  in $L^2(0,1)$ as $\eps \to 0$. Taking the limit $\eps \to 0$ after an integration by parts, we find
$$
\lim_{\eps \to 0} L_{N,\eps}\left(\psi\right)=\frac{1}{2} \int_0^1 \nabla \left(\frac{\psi}{n} \right) \cdot \nabla  n_{N}\,dx+ \int_0^1\frac{\psi}{n} \sum_{p=1}^N \rho_p |\nabla \phi_p|^2dx,
$$
with obvious notation for $n_N$. Since
$$\sum_{p \in \NN^*}\rho_p\left\| \phi_p \right\|_{L^2}^2+\sum_{p \in \NN^*}\rho_p\left\|\nabla \phi_p \right\|_{L^2}^2=\Tr \varrho+\Tr \big(\sqrt{H}\varrho\sqrt{H}\big)<+\infty,$$
the series 
$$
\nabla n_N=\sum_{p=1}^N \rho_p \nabla |\phi_p|^2 \qquad \textrm{and} \qquad \sum_{p=1}^N \rho_p |\nabla \phi_p|^2,
$$
are absolutely convergent in $L^1(0,1)$, and we can pass to the limit as $N\to +\infty$ to obtain
$$
 \left(A_2,\psi\right)_{\H^{-1}_{per},\Hunper}=-\frac{1}{2} \int_0^1 \nabla \left(\frac{\psi}{n} \right) \cdot \nabla  n \,dx-\int_0^1\frac{\psi}{n} \sum_{p=1}^\infty \rho_p |\nabla \phi_p|^2dx.
$$
Gathering the latter result and the expression of $A_1$ then ends the proof.
\end{proof}

 To prove Theorem \ref{regA}, we will need some estimates on the eigenvectors $\phi_p$ and eigenvalues $\mu_p$ of the quadratic form $Q_A$ defined in \fref{qu}, that we state in the following lemma. 
\begin{lemma}(Estimates for the eigenvalues and eigenvectors of the minimizer). Denote by $(\rho_p,\phi_p)_{p \in \NN^*}$ the eigenvalues and eigenvectors of the minimizer $\varrho[n]$. For $\rho_p=e^{-\mu_p}$, and $A \in \H^{-1}_{per}$ the chemical potential, we have the following statements, for all $p \in \NN^*$:
\begin{itemize}
\item If $A \in L^r$ with $r=1$ or $r=2$, then
\be \label{estimdeltaL1}
\|\Delta \phi_p\|_{L^r} \leq |\mu_p|+C \|A\|_{L^r} \left(1+\|\nabla \phi\|^{1/2}_{L^2}\right).
\ee
\item If $\lambda_p=(2 \pi p)^2$, then
\be \label{controlmu}
\frac{1}{2} \lambda_p-C \|A\|^4_{\H^{-1}_{per}}-C \leq \mu_p \leq \frac{3}{2} \lambda_p +C \|A\|^4_{\H^{-1}_{per}}+C.
\ee
\item Finally,
\be \label{estimgrad}
\| \nabla \phi_p\|^2_{L^2} \leq  C|\mu_p|+C\|A\|^4_{\H^{-1}_{per}} +C.
\ee
\end{itemize}
Above, $C$ is a constant independent of $p$ and $A$.
\end{lemma}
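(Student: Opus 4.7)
The plan is to treat the three bullets sequentially, reducing each to the eigenvalue equation for the quadratic form $Q_A$ together with standard one-dimensional Sobolev/Gagliardo--Nirenberg estimates.

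For the first estimate, I would start from the eigenvalue equation $(H+A)\phi_p=\mu_p\phi_p$, interpreted weakly through the form $Q_A$, and observe that it yields the distributional identity $-\Delta \phi_p = \mu_p \phi_p - A\phi_p$ once $A\in L^r$, so that $A\phi_p$ is in fact a function. Taking $L^r$ norms and using $\|\phi_p\|_{L^2}=1$ gives $\|\Delta\phi_p\|_{L^r}\leq |\mu_p|\|\phi_p\|_{L^r}+\|A\|_{L^r}\|\phi_p\|_{L^\infty}$, and for $r=1$ or $r=2$ the factor $\|\phi_p\|_{L^r}$ is bounded since $L^\infty \hookrightarrow L^r$ on $[0,1]$. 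The Gagliardo--Nirenberg inequality in one dimension,
\[
\|\phi_p\|_{L^\infty} \leq C\bigl(\|\phi_p\|_{L^2}+\|\phi_p\|_{L^2}^{1/2}\|\nabla\phi_p\|_{L^2}^{1/2}\bigr) \leq C\bigl(1+\|\nabla\phi_p\|_{L^2}^{1/2}\bigr),
\]
then produces exactly the claimed bound \eqref{estimdeltaL1}.

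For the second estimate, the strategy is to combine the min--max characterization of $\mu_p$ with sharp two-sided bounds on $Q_A(\phi,\phi)$ in terms of $\|\nabla\phi\|_{L^2}^2$. Since $A\in\H^{-1}_{per}$, I would estimate the dual pairing as $|(A,|\phi|^2)|\leq \|A\|_{\H^{-1}_{per}}\||\phi|^2\|_{\Hunper}$, and then use $\||\phi|^2\|_{L^2}\leq \|\phi\|_{L^\infty}$ and $\|\nabla|\phi|^2\|_{L^2}\leq 2\|\phi\|_{L^\infty}\|\nabla\phi\|_{L^2}$ combined with the one-dimensional Gagliardo--Nirenberg bound above (normalizing $\|\phi\|_{L^2}=1$) to obtain $\||\phi|^2\|_{\Hunper}\leq C(1+\|\nabla\phi\|_{L^2}^{3/2})$. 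Young's inequality with exponents $4/3$ and $4$ then absorbs the cubic term: $C\|A\|_{\H^{-1}_{per}}\|\nabla\phi\|_{L^2}^{3/2}\leq \tfrac12\|\nabla\phi\|_{L^2}^2+C\|A\|_{\H^{-1}_{per}}^4$. This yields the two-sided bound
\[
\tfrac12\|\nabla\phi\|_{L^2}^2-C\|A\|_{\H^{-1}_{per}}^4-C \;\leq\; Q_A(\phi,\phi) \;\leq\; \tfrac32\|\nabla\phi\|_{L^2}^2+C\|A\|_{\H^{-1}_{per}}^4+C.
\]
Feeding this into the min--max formula $\mu_p=\min_{\dim V=p}\max_{\phi\in V}Q_A(\phi,\phi)/\|\phi\|_{L^2}^2$, compared against the corresponding min--max for $-\Delta$ on $\Hunper$ whose $p$-th eigenvalue is $\lambda_p=(2\pi p)^2$, gives \eqref{controlmu}.

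The third estimate is then immediate: applying $Q_A(\phi_p,\phi_p)=\mu_p$ to the lower bound from the previous step yields $\|\nabla\phi_p\|_{L^2}^2\leq 2\mu_p+C\|A\|_{\H^{-1}_{per}}^4+C$, hence \eqref{estimgrad}. The main technical point to be careful about is the rigorous justification that $\phi_p\in\Hunper$ is admissible as a test function in $Q_A$ and that the operator-theoretic eigenvalue equation indeed translates, under the integrability hypothesis on $A$, into the pointwise identity needed for the first estimate; this is where the mildly regularized expression \eqref{repform} for $A$ from Proposition \ref{expA} will be invoked to guarantee that $A\phi_p$ makes sense as an $L^r$ function.
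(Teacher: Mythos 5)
Your proposal follows essentially the same approach as the paper: the weak eigenvalue equation $-\Delta\phi_p=(\mu_p-A)\phi_p$ together with the one-dimensional Gagliardo--Nirenberg bound $\|\phi_p\|_{L^\infty}\leq C(1+\|\nabla\phi_p\|_{L^2}^{1/2})$ for the first estimate, then the two-sided bound on $Q_A$ via the $\H^{-1}_{per}$-$\Hunper$ duality pairing plus Young's inequality combined with the min--max principle for the second, and evaluation at $\varphi=\phi_p$ for the third. One small slip in the justification of the first estimate: the factor $\|\phi_p\|_{L^r}\leq 1$ for $r\in\{1,2\}$ comes from the normalization $\|\phi_p\|_{L^2}=1$ together with H\"older on the bounded interval (so $\|\phi_p\|_{L^1}\leq\|\phi_p\|_{L^2}$), not from the embedding $L^\infty\hookrightarrow L^r$ as you write; also the invocation of Proposition \ref{expA} at the end is unnecessary, since the hypothesis $A\in L^r$ together with $\phi_p\in\Hunper\subset L^\infty$ already makes $A\phi_p$ a well-defined $L^r$ function. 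Neither affects the validity of the argument.
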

\begin{proof}
First of all, since $A$ is assumed to belong to $L^1$ for \fref{estimdeltaL1}, we have the identification
$$ (A, \psi)_{\H^{-1}_{per},\Hunper}=\int_0^1 A \psi dx $$
and we find, for all $\varphi \in \Hunper$,
\begin{equation}
\label{a1}
Q_A(\varphi,\phi_p)=(\nabla \varphi, \nabla \phi_p)+\int_0^1\overline{\varphi}A \phi_pdx=\mu_p(\varphi,\phi_p),
\end{equation}
which implies that 
\be \label{vpe}-\Delta \phi_p=(\mu_p-A)\phi_p\ee
 in the distribution sense. Hence, using the Gagliardo-Nirenberg inequality for $\H^1$ functions (with $L^2$ norm equal to one),
\be \label{gag}
 \|\phi\|_{L^\infty} \leq C \|\phi\|^{1/2}_{L^2}\|\phi\|^{1/2}_{\H^1}=C\left(1+\|\nabla \phi\|^{1/2}_{L^2}\right),
\ee
we deduce \eqref{estimdeltaL1}. In order to prove \eqref{controlmu}, we write, for any function $\varphi \in \Hunper$ with $L^2$ norm equal to one,  
\begin{eqnarray}
\left|\int_0^1 A |\varphi|^2dx \right|&\leq& \|A\|_{\H^{-1}_{per}}\| |\varphi|^2 \|_{\H^1}\leq C \|A\|_{\H^{-1}_{per}}\ \left(\| \varphi \|_{L^4}^2+\|\varphi\|_{L^\infty}\left\|\nabla \varphi\right\|_{L^2}\right)\nonumber\\
&\leq & C \|A\|_{\H^{-1}_{per}}\left(\left\| \varphi\right\|_{\H^1}^{1/2}+\left\| \varphi \right\|^{3/2}_{\H^1} \right)\nonumber\\
&\leq&\frac{1}{2}\left\|\nabla \varphi \right\|_{L^2}^2+C\|A\|^4_{\H^{-1}_{per}}+C.\label{a2}
\end{eqnarray}
This yields
$$
\frac{1}{2} (\nabla\varphi,\nabla \varphi)-C \|A\|^4_{\H^{-1}_{per}}-C \leq Q_{A}(\varphi,\varphi) \leq \frac{3}{2}(\nabla \varphi,\nabla \varphi)+C \|A\|^4_{\H^{-1}_{per}} +C
$$
and \eqref{controlmu} follows from the minimax principle. Finally, \fref{estimgrad} is deduced from \eqref{a1} and \eqref{a2} by choosing $\varphi=\phi_p$. 
\end{proof}

\begin{proof}[Proof of Theorem \ref{regA}]  Let $n\in \Hunper$ such that $n(x)\geq \underline n>0$ on $[0,1]$ and $\Delta n \in W^{k,r}(0,1)$. Clearly, we have $\frac{\Delta n}{n}\in W^{k,r}(0,1)$ so, by Proposition \ref{expA}, it suffices to show that the two series in the expression \eqref{repform} of $A$ are convergent in $W^{k,r}(0,1)$. We first notice that, by Proposition \ref{severalestimates}, both series are absolutely convergent in $L^1$. Hence,  we already have $A \in L^1(0,1)$ with
\bea \nonumber 
\|A\|_{L^1}&\leq& \frac{C}{\underline{n}} \left( \|\Delta n \|_{L^1} +\Tr\sqrt{H}\varrho \sqrt{H}+\Tr|\varrho \log \varrho |\right)\\ 
&\leq& \frac{C}{\underline{n}} \left(\|\Delta n \|_{L^2} +\mathfrak{H}_0(n)\right)<\infty.
\label{estimAL1}
\eea

Let us estimate the $L^r$ norm of $A$. We treat the case $r=2$ separately, since refined estimates will be needed for the analysis of the Liouville equation.
\bs

\paragraph{\textit{The case $k=0$, $r=2$.}} By the Gagliardo-Nirenberg inequality
$$
\|\nabla \phi\|_{L^4} \leq C \|\nabla \phi\|^{1/2}_{L^2}\|\nabla \phi\|^{1/2}_{W^{1,1}}
$$
and Cauchy-Schwarz, we have
\begin{align*}
\sum_{p=1}^N \rho_p \| \nabla\phi_p \|^2_{L^4}
&\leq C \left(\sum_{p=1}^N \rho_p  \| \nabla\phi_p \|^2_{L^2}\right)^{1/2}
\left(\sum_{p=1}^N \rho_p  \| \nabla\phi_p \|^2_{W^{1,1}}\right)^{1/2}\\
&\leq C\bigg(\Tr\sqrt{H}\varrho \sqrt{H}\bigg)^{1/2} \left(\sum_{p=1}^N \rho_p  (\| \nabla\phi_p \|^2_{L^1}+\| \Delta \phi_p \|^2_{L^1})\right)^{1/2}.
\end{align*}
Let
$$
S=\sum_{p=1}^N \rho_p  \Big(\| \nabla\phi_p \|^2_{L^1}+\| \Delta \phi_p \|^2_{L^1}\Big).
$$
Using  \fref{estimdeltaL1} and the Young inequality, recalling that $\rho_p=e^{-\mu_p}$, we find
\begin{align*}
S&  \leq C \sum_{p=1}^N \rho_p  |\mu_p|^2+C \sum_{p=1}^N \rho_p \Big(1+\|A\|_{L^1}^2\Big)\Big(1 +\| \nabla \phi_p \|_{L^2}^2\Big)\\
& \leq C \sum_{p=1}^N e^{-\mu_p}  |\mu_p|^2+C\Big(1+\|A\|_{L^1}^2\Big)\Big(\Tr \varrho+\Tr \sqrt{H}\varrho \sqrt{H}\Big).
\end{align*}
The eigenvalues $e^{- \mu_p}$ can then be estimated thanks to \fref{controlmu},
$$e^{- \mu_p} \leq C e^{- \lambda_p/2}  \exp \big(C \|A\|^4_{\H^{-1}_{per}}\big) \qquad \textrm{and} \qquad |\mu_p|^2 \leq C\big(1+\lambda_p^2+\|A\|^8_{\H^{-1}_{per}}\big). $$ Therefore, using that $\lambda_p=(2 \pi p)^2$, 
\begin{align}
\sum_{p=1}^N e^{-\mu_p}  |\mu_p|^2&\leq C \exp\big(C \|A\|^4_{\H^{-1}_{per}}\big) \sum_{p=1}^N e^{-\lambda_p/2} \left( 1+\lambda_p^2+\|A\|^8_{\H^{-1}_{per}}\right)\nonumber\\
&\leq C \exp\big(C \|A\|^4_{\H^{-1}_{per}}\big).\label{a3}
\end{align}
Hence, going back to the definition of $S$, we find
$$
S\leq C \exp\big(C \|A\|^4_{\H^{-1}_{per}}\big)+C\big(1+\|A\|^2_{L^1}\big)\| \varrho \|_\calE,
$$
which yields
$$
 \sum_{p=1}^N \rho_p \| \nabla\phi_p \|^2_{L^4} \leq C\| \varrho \|^{1/2}_{\calE} \exp\big(C\|A\|^4_{\H^{-1}_{per}}\big)+C \left(1+\|A\|_{L^1} \right) \| \varrho \|_\calE.
$$
This implies that the series $\sum_p \rho_p | \nabla\phi_p |^2$ is absolutely convergent in $L^2(0,1)$.  Using \fref{estimsolmom1}, \fref{estimsolmom2} and \fref{estimAL1} together with the Young inequality, we obtain finally
\begin{equation}
\sum_{p=1}^\infty \rho_p \| \nabla\phi_p \|^2_{L^4}\leq C \mathfrak{H}_0(n)\left(1+\frac{1}{\underline{n}} \left({\| \Delta n \|_{L^2}}+\mathfrak{H}_0(n)\right)\right)+C \exp\left( C \mathfrak{H}_1(n)^4\right). \label{estimserie1} 
\end{equation}
We estimate now the entropy term in \eqref{repform}. Using a Gagliardo-Nirenberg inequality and the fact that $\rho_p=e^{-\mu_p}$, we get
\bea \nonumber
\sum_{p=1}^N | \rho_p \log \rho_p| \| \phi_p \|^2_{L^4} &\leq& C \sum_{p=1}^N |\rho_p \log \rho_p| \| \phi_p \|_{\H^1} \\\nonumber
&\leq& C \Bigg(\Tr \varrho+\Tr \sqrt{H}\varrho \sqrt{H}\Bigg)^{1/2} \Bigg(
\sum_{p=1}^N e^{- \mu_p} |\mu_p|^2  \Bigg)^{1/2}\\
&\leq& C \|\varrho\|_{\calE}^{1/2} \exp(C \|A\|^4_{\H^{-1}_{per}})\nonumber\\
&\leq& C \sqrt{\mathfrak{H}_0(n)} \exp\left( C \mathfrak{H}_1(n)^4\right)
, \label{estimserie2}
\eea
where we used \eqref{a3}, \eqref{estimsolmom1} and \eqref{estimsolmom2}. The series $ \sum_p \rho_p \log \rho_p |\phi_p|^2$ is absolutely convergent in $L^2(0,1)$ and, gathering \fref{estimserie1}, \fref{estimserie2} and \eqref{repform}, we obtain the estimate of Theorem \ref{regA}. This concludes the case $k=0$, $p=2$.\\

\paragraph{\textit{Case $k=0$, $p\in [1,\infty]$}} Let us prove that the two series in \eqref{repform} are converging in $L^\infty(0,1)$. Recall first the Sobolev embedding $W^{1,1} \subset L^\infty$, which, together with \fref{estimdeltaL1} yields
\begin{align*}
\|\nabla \phi_p\|_{L^\infty} \leq C \|\nabla \phi_p\|_{W^{1,1}} &\leq C (\|\Delta \phi_p\|_{L^1}+\|\nabla \phi_p\|_{L^1})\\
&\leq  C (|\mu_p|+\|A\|_{L^1}^2+\|\nabla \phi_p\|_{L^2}).
\end{align*}
Therefore,
\bee
\sum_{p=1}^N \rho_p \| \nabla\phi_p \|^2_{L^\infty} &\leq& C \sum_{p=1}^N \rho_p (|\mu_p|^2+\|A\|_{L^1}^4+\|\nabla \phi_p\|_{L^2}^2)\\
 &\leq & C \sum_{p=1}^N e^{- \mu_p} |\mu_p|^2+ C\|A\|_{L^1}^4\Tr \varrho +C \Tr\sqrt{H}\varrho \sqrt{H},
\eee
which shows that the series $\sum_p \rho_p | \nabla\phi_p |^2$ is absolutely convergent in $L^\infty(0,1)$, since $\mu_p$ satisfies \fref{controlmu}. Similarly, using again \fref{gag} and Cauchy-Schwarz, we have
\bee
\sum_{p=1}^N |\rho_p \log \rho_p| \| \phi_p \|^2_{L^\infty} &\leq& C \sum_{p=1}^N |\rho_p \log \rho_p| \left(1+\|\nabla \phi_p \|_{L^2}\right)\\
&\leq& C \Bigg(\sum_{p=1}^N e^{- \mu_p} |\mu_p|^2 \Bigg)^{1/2}\Bigg(\Tr \varrho+\Tr\sqrt{H}\varrho \sqrt{H}\Bigg)^{1/2}
\eee
so the series $ \sum_p \rho_p \log \rho_p |\phi_p|^2$ is absolutely convergent in $L^\infty(0,1)$. We have  proved that if $\Delta n \in L^1(0,1)$, then both series
$$
\sum_{p\in\NN^*} \rho_p |\nabla \phi_p|^2\quad\mbox{and}\quad\sum_{p\in\NN^*} \rho_p \log \rho_p |\phi_p|^2
$$
are convergent in $L^\infty(0,1)$.
Hence, we conclude with Proposition \ref{expA} that $\Delta n \in L^p(0,1)$ implies $A \in L^p(0,1)$ for $p \in [1,\infty]$. \\

\paragraph{\textit{General case and conclusion}} Let us prove by induction that, if $\Delta n \in W^{k,r}$, then $A \in W^{k,r}$. The case $k=0$ was just addressed. Suppose then that $\Delta n \in W^{k,r}$ for $k \geq 1$, and that $A \in W^{k-1,r}$. In order to obtain that $A \in W^{k,r}$, we differentiate \eqref{repform} $k$ times. We shall only prove that the following series is absolutely convergent in $L^\infty$, the other terms can be estimated similarly and are left to the reader: 
$$
T_N:= \sum^N_{p=1} \rho_p (\partial^{k+1}_x \phi_p )(\overline{\partial_x \phi_p}). 
$$ 
Using \eqref{vpe}, we get the estimate
$$
\| \partial^2_x \phi_p\|_{L^1} \leq \left(|\mu_p| +\|A\|_{L^\infty}\right)\|\phi_p\|_{L^1}
$$
and, using that $A \in  W^{k-1,r}\cap L^\infty$ and the interpolation inequality 
$$\|\pa_x \phi_p\|_{L^1}\leq C\|\phi_p\|_{L^1}^{1/2}\|\pa_x^2\phi_p\|_{L^1}^{1/2},$$
we obtain
$$
\| \phi_p \|_{W^{2,1}}\leq C\left(1+|\mu_p|\right)\|\phi_p\|_{L^1} \leq C\left(1+|\mu_p|\right).
$$
Now, coming back to \eqref{vpe}, we get, for all $1\leq \ell \leq k-1$, 
\begin{align*}
\| \partial^{\ell+2}_x \phi_p\|_{L^1} &\leq \left(|\mu_p| +\|A\|_{L^\infty}\right)\| \partial^\ell_x \phi_p\|_{L^1}+C \|A\|_{W^{\ell,1}} \| \phi_p \|_{W^{\ell-1,\infty}}\\
&\leq |\mu_p| \| \partial^\ell_x \phi_p\|_{L^1}+C \|A\|_{W^{\ell,1}} \| \phi_p \|_{W^{\ell,1}},
\end{align*}
where we used that $W^{\ell,1}\hookrightarrow W^{\ell-1,\infty}$. This leads to
$$
\| \phi_p \|_{W^{\ell+2,1}}\leq  C(1+|\mu_p|) \| \phi_p \|_{W^{\ell,1}}.
$$
After iterating, this yields
\begin{align*}
\|\phi_p\|_{W^{\ell+2,1}} &\leq C(1+|\mu_p|)^{\ell/2+1} &&\mbox{if $\ell$ is even,}\\
\|\phi_p\|_{W^{\ell+2,1}} &\leq C(1+|\mu_p|)^{(\ell-1)/2+1} \| \phi_p \|_{W^{1,1}} &&\mbox{if $\ell$ is odd,}\\
&\leq C(1+|\mu_p|)^{(\ell-1)/2+1} \| \phi_p \|_{L^2}^{1/2} \| \phi_p \|_{W^{2,1}}^{1/2}&&\mbox{(by interpolation)}\\
&\leq C(1+|\mu_p|)^{\ell/2+1} 
\end{align*}
 Therefore, using that $W^{k+1,1}\hookrightarrow W^{1,\infty}$, we get
\bee
\| T_N\|_{L^1} \leq C \sum_{p \in \NN^*} \rho_p \|\phi_p\|_{W^{k+1,1}}\|\na \phi_p\|_{L^\infty} &\leq &C \sum_{p \in \NN^*} \rho_p \|\phi_p\|_{W^{k+1,1}}^2 \\
&\leq &C \sum_{p \in \NN^*} \rho_p  (1+|\mu_p|)^{k+1}<+\infty\\
\eee
since $\rho_p=e^{-\mu_p}$ and $\mu_p$ satisfies \fref{controlmu}: we obtain that $A \in W^{k,1}$. Hence, the above estimates are also valid for $\ell=k$ and we conclude with
\begin{align*}
\| T_N\|_{L^\infty} \leq C \sum_{p \in \NN^*} \rho_p \|\phi_p\|_{W^{k+1,\infty}}^2&\leq C \sum_{p \in \NN^*} \rho_p \|\phi_p\|_{W^{k+2,1}}^2 \\
&\leq C\sum_{p \in \NN^*} \rho_p (1+|\mu_p|)^{k+2}<+\infty.
\end{align*}

This ends the proof of Theorem \ref{regA}.
\end{proof}
\section{More results on the moment problem} \label{further}

An application of Theorem \ref{regA} is the following proposition, which provides additional regularity on the minimizer $\varrho[n]$ if $A$ belongs to $L^2$.
\begin{proposition} \label{regmin} (Regularity of the minimizer) Consider a density $n\in \Hunper$ such that $n\geq \underline{n}>0$ on $[0,1]$ and $\Delta n \in L^2$. Then, for any $\alpha>0$, the minimizer $\varrho[n]=e^{-H_A}$, with $H_A=H+A$, belongs to $\calH$, satisfies $(\varrho[n])^\alpha \in \calJ_1$ and satisfies the estimate, for any $a \in \RR$,
\be \label{estimFA}
\Tr\big((H+a\II) (\varrho[n])^\alpha (H+a\II) \big) \leq C \exp\big( C \|A\|^4_{\H^{-1}_{per}}\big) \big(1+\|A\|_{L^2}^2\big),
\ee
where $C$ is a constant independent of $n$. Moreover, $H \varrho[n] \in \calJ_1$ and $\varrho[n] H\in \calJ_1$.
\end{proposition}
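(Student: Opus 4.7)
\medskip
\noindent
\emph{Proof proposal.} The plan is to leverage Theorem \ref{regA}, which gives us $A \in L^2$ with the quantitative bound stated there, in order to upgrade the control on each eigenfunction $\phi_p$ from the $H^1$ level (already available via \fref{estimgrad}) to the $H^2$ level. The starting point is the eigenvalue equation \eqref{vpe}, namely $-\Delta \phi_p = (\mu_p - A)\phi_p$, which, since $A\phi_p \in L^2$, actually holds in the $L^2$ sense and gives $\phi_p \in D(H)$ together with the pointwise bound
\[
\|\Delta \phi_p\|_{L^2} \leq |\mu_p| + \|A\|_{L^2}\,\|\phi_p\|_{L^\infty}.
\]
Combined with the Gagliardo-Nirenberg estimate \fref{gag} and the $H^1$ control \fref{estimgrad}, this yields a bound of the form $\|\Delta\phi_p\|_{L^2}^2 \leq C|\mu_p|^2 + C\|A\|_{L^2}^2\bigl(1 + |\mu_p| + \|A\|_{\H^{-1}_{per}}^4\bigr)$, i.e.\ a polynomial in $|\mu_p|$ whose coefficients depend on the two norms of $A$ already controlled.

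Next, for the main trace estimate \fref{estimFA}, I would expand
\[
\Tr\bigl((H+a\II)(\varrho[n])^\alpha (H+a\II)\bigr) = \sum_{p\in\NN^*} e^{-\alpha\mu_p}\,\|(H+a\II)\phi_p\|_{L^2}^2,
\]
using the spectral decomposition of $\varrho[n]=e^{-H_A}$ (which shares eigenfunctions with $H_A$ but not with $H$; nevertheless the right-hand side is well defined because each $\phi_p$ is in $D(H)$ by the previous step). Bounding $\|(H+a\II)\phi_p\|_{L^2}^2 \leq 2\|\Delta\phi_p\|_{L^2}^2 + 2a^2$ and inserting the polynomial bound on $\|\Delta\phi_p\|_{L^2}^2$ reduces the problem to estimating sums of the form $\sum_p e^{-\alpha\mu_p}|\mu_p|^k$ for $k \in \{0,1,2\}$. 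The lower bound on $\mu_p$ in \fref{controlmu} gives $e^{-\alpha\mu_p} \leq C_\alpha \exp(C\alpha\|A\|^4_{\H^{-1}_{per}}) e^{-\alpha \lambda_p/2}$, and since $\lambda_p = (2\pi p)^2$, these sums all converge with a prefactor of the form $\exp(C\|A\|^4_{\H^{-1}_{per}})$, exactly as in the estimate \eqref{a3} already used in the proof of Theorem \ref{regA}. Collecting factors and applying Young's inequality on the resulting product of $\|A\|_{L^2}^2$ and $\exp(C\|A\|^4_{\H^{-1}_{per}})$ gives \fref{estimFA}. Taking $\alpha=1$ and $a=0$ shows in particular that $\varrho[n]\in\calH$, and taking $a=0$ with arbitrary $\alpha$ combined with $\sum_p e^{-\alpha\mu_p}<\infty$ shows $(\varrho[n])^\alpha\in\calJ_1$.

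Finally, for $H\varrho[n]\in\calJ_1$, the natural approach is to factor $H\varrho[n] = (H\sqrt{\varrho[n]})\sqrt{\varrho[n]}$ and invoke the Hölder inequality for Schatten classes, $\|AB\|_{\calJ_1} \leq \|A\|_{\calJ_2}\|B\|_{\calJ_2}$. Here $\|\sqrt{\varrho[n]}\|_{\calJ_2}^2 = \Tr \varrho[n]$ is finite, and $\|H\sqrt{\varrho[n]}\|_{\calJ_2}^2 = \Tr(H\varrho[n] H)$ is controlled by \fref{estimFA} with $\alpha=1$, $a=0$. The statement $\varrho[n] H\in\calJ_1$ then follows by taking the adjoint. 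I do not expect any serious obstacle: the whole argument is essentially a careful bootstrapping of the pointwise bounds on $\phi_p$ and $\mu_p$ that were already set up in the proof of Theorem \ref{regA}, together with the summability over $p$ that the rapid decay of $e^{-\mu_p}$ provides. The only delicate point is to make sure that the final constant depends on $n$ only through $\|A\|_{L^2}$ and $\|A\|_{\H^{-1}_{per}}$, so that the estimate is genuinely usable in the analysis of the Liouville-BGK equation.
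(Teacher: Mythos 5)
Your proposal is correct and follows essentially the same route as the paper: expand the trace in the eigenbasis of $\varrho[n]$, bound $\|(H+a\II)\phi_p\|_{L^2}^2$ via the eigenvalue equation $-\Delta\phi_p=(\mu_p-A)\phi_p$, use \fref{estimdeltaL1}, \fref{controlmu}, \fref{estimgrad} and the Gagliardo–Nirenberg inequality to reduce to sums of $e^{-\alpha\mu_p}|\mu_p|^k$, and invoke the Gaussian decay coming from \fref{controlmu}. The only genuine divergence is the final claim $H\varrho[n]\in\calJ_1$: the paper uses the polar decomposition $H\varrho[n]=U|H\varrho[n]|$ and then inserts $(H+\II)^{-1}(H+\II)$ on both sides to bound $\Tr|H\varrho[n]|$ by $C\,\Tr\big((H+\II)\varrho[n](H+\II)\big)$, whereas you factor $H\varrho[n]=(H\sqrt{\varrho[n]})\sqrt{\varrho[n]}$ and apply the Schatten Hölder inequality $\|AB\|_{\calJ_1}\le\|A\|_{\calJ_2}\|B\|_{\calJ_2}$, observing $\|H\sqrt{\varrho[n]}\|_{\calJ_2}^2=\Tr(H\varrho[n]H)$; both arguments hinge on the same already-established bound and are equally standard, yours being slightly more economical.
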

\begin{proof} First of all, by Theorem \ref{regA}, we have $A \in L^2$. Then, direct computations yield
\bea
\Tr \big((H+a\II) (\varrho[n])^\alpha (H+a\II) \big)&=&\sum_{p\in \NN^*} (\rho_p)^\alpha \| (H+a\II) \phi_p \|^2_{L^2}\nonumber\\
&\leq & 2\sum_{p\in \NN^*} (\rho_p)^\alpha \| \Delta \phi_p \|^2_{L^2}+2a^2\sum_{p\in \NN^*}(\rho_p)^\alpha.\label{a4}
\eea
According to \fref{estimdeltaL1}, \fref{controlmu}, and \fref{estimgrad}, we can control the first term of the right-hand-side by
\begin{align*}
&\sum_{p\in \NN^*} (\rho_p)^\alpha \| \Delta \phi_p \|^2_{L^2}\\
&\hspace{1cm}\leq C \sum_{p\in \NN^*} e^{-\alpha \mu_p} \Big( |\mu_p|^2+\|A\|^2_{L^2}  \left(1+\|\nabla \phi_p\|_{L^2}\right)\Big)\\
&\hspace{1cm}\leq C\exp\Big(C \|A\|^4_{\H^{-1}_{per}}\Big)\sum_{p\in \NN^*} e^{-C \alpha p^2} \Big( |\mu_p|^2+\|A\|^2_{L^2} \big(1+|\mu_p|^{1/2}+\|A\|^2_{\H^{-1}_{per}}\big) \Big)\\
&\hspace{1cm}\leq C\exp\Big(C \|A\|^4_{\H^{-1}_{per}}\Big) \Big(1+\|A\|^2_{L^2}\Big).
\end{align*}
With  \fref{controlmu}, the second term of the r.h.s. of \eqref{a4} is straightforwardly controlled by
$$
 \sum_{p\in \NN^*}(\rho_p)^\alpha \leq C\exp\Big(C \|A\|^4_{\H^{-1}_{per}}\Big) \sum_{p\in \NN^*} e^{-C \alpha p^2} \leq  C\exp\Big(C \|A\|^4_{\H^{-1}_{per}}\Big).
$$
The fact that $H \varrho[n] \in \calJ_1$ is easily established by using the polar decomposition $H \varrho[n]= U|H \varrho[n]|$, where $U$ is a partial isometry on $L^2$ {\em i.e.} an isometry on $\mbox{Ker\,}U^\perp=\overline{\mbox{Ran\,}|H\varrho[n]|}$),  and the following computation
\bee
\Tr |H \varrho[n]|&=&\Tr \big(U^* H \varrho[n] \big)=\Tr \big(U^* H (H+\II)^{-1}(H+\II) \varrho[n](H+\II)(H+\II)^{-1} \big)\\
&\leq& \|U^* H (H+\II)^{-1}\|_{\calL(L^2)} \Tr \big((H+\II) \varrho[n](H+\II)\big)\|(H+\II)^{-1}\|_{\calL(L^2)}\\
&\leq& C\Tr \big((H+\II) \varrho[n](H+\II)\big)<\infty.
\eee
Therefore, $H \varrho[n] \in \calJ_1$ and its adjoint $\varrho[n] H$ is also trace-class. This ends the proof.
\end{proof}

In the analysis of the Liouville equation, we will need the following result.
\begin{lemma} \label{H2n}
Let $\varrho \in \calH$ with $\varrho \geq 0$. Then, the corresponding local density $n[\varrho]$ belongs to $\H^2(0,1)$, and we have the estimate
$$
\|\Delta n[\varrho]\|_{L^2} \leq C \| \varrho \|^{1/2}_{\calE} \big(\Tr H\varrho H \big)^{1/2}.
$$
\end{lemma}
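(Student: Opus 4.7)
The plan is to exploit the spectral decomposition $\varrho=\sum_{k\in \NN^*}\rho_k|\phi_k\rangle\langle\phi_k|$, where $(\phi_k)$ is an orthonormal basis of $L^2(0,1)$ and $\phi_k\in D(H)$ whenever $\rho_k>0$: this regularity is available because $\varrho\in\calH$ together with $\varrho\geq 0$ forces $\sum_k\rho_k\|\Delta\phi_k\|_{L^2}^2=\Tr(H\varrho H)<+\infty$. Starting from $n[\varrho]=\sum_k\rho_k|\phi_k|^2$, termwise double differentiation yields the pointwise decomposition
\be\label{plandecomp}
\Delta n[\varrho]=2T_1+2T_2,\qquad T_1:=\sum_{k\in\NN^*}\rho_k|\nabla\phi_k|^2,\qquad T_2:=\Re\sum_{k\in\NN^*}\rho_k\overline{\phi_k}\Delta\phi_k,
\ee
so the task reduces to estimating $\|T_1\|_{L^2}$ and $\|T_2\|_{L^2}$ separately.

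The term $T_2$ is handled by a pointwise Cauchy-Schwarz in $k$, which immediately yields $|T_2(x)|^2\leq n[\varrho](x)\,N(x)$ with $N(x):=\sum_k\rho_k|\Delta\phi_k(x)|^2$ satisfying $\|N\|_{L^1}=\Tr(H\varrho H)$. Integrating and bounding $\|n[\varrho]\|_{L^\infty}\leq C\|n[\varrho]\|_{W^{1,1}}\leq C\|\varrho\|_{\calE}$ via \fref{estW11} and the one-dimensional embedding $W^{1,1}\hookrightarrow L^\infty$, one obtains $\|T_2\|_{L^2}^2\leq C\|\varrho\|_{\calE}\Tr(H\varrho H)$, which is of the required form.

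The term $T_1$ is the delicate one, because $|\nabla\phi_k|^2$ admits no pointwise factorization. I would instead resort to the elementary interpolation $\|T_1\|_{L^2}^2\leq \|T_1\|_{L^1}\|T_1\|_{L^\infty}$. The $L^1$-norm equals $E_1:=\Tr(\sqrt{H}\varrho\sqrt{H})$. For the $L^\infty$-norm, I would apply the one-dimensional Gagliardo-Nirenberg inequality $\|f\|_{L^\infty}^2\leq C\|f\|_{L^2}^2+C\|f\|_{L^2}\|\nabla f\|_{L^2}$ to each $f=\nabla\phi_k$, sum against $\rho_k$, and use Cauchy-Schwarz in $k$ on the cross term to arrive at $\|T_1\|_{L^\infty}\leq CE_1+C\sqrt{E_1 E_2}$ with $E_2:=\Tr(H\varrho H)$. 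Combining, $\|T_1\|_{L^2}^2\leq CE_1^2+CE_1^{3/2}E_2^{1/2}$.

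To close the argument, I invoke the operator Cauchy-Schwarz in $\calJ_2$, which gives $E_1^2=|\Tr(\sqrt{\varrho}\cdot H\sqrt{\varrho})|^2\leq \|\sqrt{\varrho}\|_{\calJ_2}^2\|H\sqrt{\varrho}\|_{\calJ_2}^2=\Tr\varrho\cdot E_2\leq \|\varrho\|_{\calE}E_2$, together with $E_1\leq \|\varrho\|_{\calE}$ and the Young-type inequality $E_1^{3/2}E_2^{1/2}\leq \tfrac12(E_1^2+E_1E_2)$. Both contributions then collapse to $C\|\varrho\|_{\calE}\Tr(H\varrho H)$, and adding the $T_1$ and $T_2$ estimates yields the claim. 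The main obstacle is precisely this control of $T_1$: unlike $T_2$ it does not admit a direct pointwise Cauchy-Schwarz, so the argument must go through an $L^\infty$ bound, and the key point is that the one-dimensional Gagliardo-Nirenberg inequality allows one to express this bound solely in terms of the trace quantities $E_1$ and $E_2$, avoiding uncontrollable quantities such as $\sup_k\|\Delta\phi_k\|_{L^2}$.
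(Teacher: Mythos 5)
Your proof is correct and follows the same overall structure as the paper's: both start from the pointwise decomposition of $\Delta n[\varrho]$ into the series $\sum_p\rho_p|\nabla\phi_p|^2$ and $\Re\sum_p\rho_p\overline{\Delta\phi_p}\phi_p$, bound the cross term $\overline{\phi_p}\Delta\phi_p$ by Cauchy--Schwarz combined with $n[\varrho]\in L^\infty$, and use a one-dimensional Gagliardo--Nirenberg inequality for the gradient-squared term. The only real divergence is in the treatment of $T_1=\sum_p\rho_p|\nabla\phi_p|^2$: the paper controls $\|T_1\|_{L^2}$ directly via the $L^4$ inequality $\|\nabla\phi_p\|_{L^4}\leq C\|\nabla\phi_p\|_{L^2}^{1/2}\|\Delta\phi_p\|_{L^2}^{1/2}$ (valid for the zero-mean function $\nabla\phi_p$ on $[0,1]$), which after one Cauchy--Schwarz in $p$ gives $\|T_1\|_{L^2}\leq C\|\varrho\|_\calE^{1/2}(\Tr H\varrho H)^{1/2}$ in a single step. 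You instead interpolate $\|T_1\|_{L^2}^2\leq\|T_1\|_{L^1}\|T_1\|_{L^\infty}$, estimate $\|T_1\|_{L^\infty}$ by the $L^\infty$ Gagliardo--Nirenberg inequality, and then need the extra operator Cauchy--Schwarz step $E_1^2=\big(\Tr\sqrt H\varrho\sqrt H\big)^2\leq\Tr\varrho\cdot\Tr(H\varrho H)$ plus a Young inequality to collapse the resulting $E_1^2$ and $E_1^{3/2}E_2^{1/2}$ terms into $\|\varrho\|_\calE\Tr(H\varrho H)$. Both routes yield the same estimate; the paper's $L^4$ shortcut is simply more direct, while yours makes visible the operator-level Cauchy--Schwarz between $\Tr\sqrt H\varrho\sqrt H$ and $\Tr H\varrho H$ that the $L^4$ route implicitly encodes.
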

\begin{proof} Using the embedding $\H^1(0,1) \subset L^\infty(0,1)$, it is direct to deduce  from \eqref{a5}  that  $n:=n[\varrho] \in \H^1$. Let us show that $\Delta n\in L^2$. We have
$$\Delta n= 2 \sum_{p=1}^\infty \rho_p |\nabla \phi_p|^2+ 2 \Re \left(\sum_{p=1}^\infty \rho_p \overline{\Delta \phi_p} \phi_p\right),$$
where $(\rho_p, \phi_p)_{i \in \NN^*}$ denote the spectral elements of $\varrho$. Let us show that these series are  absolutely  convergent in $L^2$. First of all, by the Gagliardo-Nirenberg inequality
$$\|\nabla \phi_p\|_{L^4}\leq C\|\nabla \phi_p\|_{L^2}^{1/2}\|\Delta\phi_p\|_{L^2}^{1/2},$$
we find
\bee
\sum_{p=1}^\infty \rho_p \|\nabla \phi_p\|^2_{L^4} &\leq & C \sum_{p=1}^\infty \rho_p \|\nabla \phi_p\|_{L^2}\|\Delta \phi_p\|_{L^2}\\
&\leq & C\|\varrho\|_{\calE}^{1/2}\left(\Tr H\varrho H\right)^{1/2}.
\eee
Similarly,
\bee
\sum_{p=1}^\infty \rho_p \|\overline{\phi_p} \Delta \phi_p \|_{L^2} &\leq & \left(\sum_{p=1}^N \rho_p \| \phi_p\|^2_{L^\infty}\right)^{1/2} \left(\sum_{p=1}^N \rho_p \| \Delta \phi_p\|^2_{L^2}\right)^{1/2},\\
&\leq & C\|\varrho\|_{\calE}^{1/2}\left(\Tr H\varrho H\right)^{1/2},
\eee
where we used the embedding $\H^1(0,1) \subset L^\infty(0,1)$. This ends the proof of the lemma.
\end{proof}

The next lemma provides us with Lieb-Thirring type inequalities that will be important in the analysis of quantum Maxwellians. Note that the lemma applies to self-adjoint operators that do not have a particular sign.

\begin{lemma} \label{LT} Let $\varrho \in \calE$ with $\varrho=\varrho^*$. Then, the following estimates hold:
\begin{align}
\label{ninfty}\|n[\varrho]\|_{L^\infty} \leq C \| \varrho\|^{1/4}_{\calJ_2}  \|\varrho\|_{\calE}^{3/4}\\
\label{gradnl2}\|\nabla n[\varrho]\|_{L^2} \leq C \| \varrho\|^{1/4}_{\calJ_1}  \|\varrho\|_{\calE}^{3/4}.
\end{align} 
\end{lemma}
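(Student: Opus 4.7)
The strategy is to reduce to the nonnegative case by means of the Jordan decomposition $\varrho = \varrho_+ - \varrho_-$. Because $\varrho_\pm \geq 0$ are mutually orthogonal and dominated by $|\varrho|$, one has $\|\varrho_\pm\|_{\calJ_p}\leq \|\varrho\|_{\calJ_p}$ for every $p\in[1,\infty]$ and $\|\varrho_\pm\|_{\calE}\leq\|\varrho\|_{\calE}$, while $n[\varrho]=n[\varrho_+]-n[\varrho_-]$. Proving the two inequalities for $\varrho\geq 0$ and applying the triangle inequality to $\varrho_+$ and $\varrho_-$ will then deliver the general case.

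For $\varrho\geq 0$ I would first establish the intermediate bound $\|n\|_{L^\infty} \leq C\|\varrho\|_{\calJ_1}^{1/2}\|\varrho\|_\calE^{1/2}$. The key tool is the Hoffmann-Ostenhof bound \fref{sqrt}: $\sqrt{n}\in \Hunper$ with $\|\nabla\sqrt{n}\|_{L^2}\leq\|\varrho\|_\calE^{1/2}$. Applying the one-dimensional Gagliardo-Nirenberg inequality $\|f\|_{L^\infty}^2 \leq C\bigl(\|f\|_{L^2}^2 + \|f\|_{L^2}\|\nabla f\|_{L^2}\bigr)$ to $f=\sqrt{n}$, together with $\|\sqrt{n}\|_{L^2}^2 = \|n\|_{L^1}\leq\|\varrho\|_{\calJ_1}$ and Young's inequality, yields the claim. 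The gradient estimate \fref{gradnl2} is then immediate: since $\nabla n = 2\sqrt{n}\,\nabla\sqrt{n}$, one has $\|\nabla n\|_{L^2}^2 \leq 4\|n\|_{L^\infty}\|\nabla\sqrt{n}\|_{L^2}^2 \leq C\|n\|_{L^\infty}\|\varrho\|_\calE$, and substituting the intermediate bound gives $\|\nabla n\|_{L^2}^2\leq C\|\varrho\|_{\calJ_1}^{1/2}\|\varrho\|_\calE^{3/2}$, as required.

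For the sharper bound \fref{ninfty} it remains to upgrade $\|\varrho\|_{\calJ_1}^{1/2}$ into $\|\varrho\|_{\calJ_2}^{1/4}\|\varrho\|_\calE^{1/4}$, which reduces to the spectral inequality
\[
\|\varrho\|_{\calJ_1}^2 \leq C\,\|\varrho\|_{\calJ_2}\,\|\varrho\|_\calE. \qquad (\star)
\]
I plan to prove $(\star)$ by a weighted Cauchy-Schwarz on the sum $\|\varrho\|_{\calJ_1}=\sum_p|\rho_p|$ with weights $(1+\|\nabla\phi_p\|^2)^{\pm 1/2}$:
\[
\|\varrho\|_{\calJ_1}^2\leq \Big(\sum_p \rho_p^2\bigl(1+\|\nabla\phi_p\|^2\bigr)\Big)\Big(\sum_p\frac{1}{1+\|\nabla\phi_p\|^2}\Big).
\]
The first factor is dominated by $\|\varrho\|_{\calL}\,\|\varrho\|_\calE$ and by $\max_p|\rho_p|\leq(\sum_p\rho_p^2)^{1/2}=\|\varrho\|_{\calJ_2}$ this is at most $\|\varrho\|_{\calJ_2}\,\|\varrho\|_\calE$. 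For the second factor, the elementary operator Cauchy-Schwarz $1=\langle\phi_p,\phi_p\rangle\leq\langle\phi_p,(1+H)\phi_p\rangle^{1/2}\langle\phi_p,(1+H)^{-1}\phi_p\rangle^{1/2}$ gives $1/(1+\|\nabla\phi_p\|^2)\leq\langle\phi_p,(1+H)^{-1}\phi_p\rangle$; summing over the orthonormal family of eigenfunctions bounds the whole factor by $\Tr\bigl((1+H)^{-1}\bigr)$, which is finite because the eigenvalues of $H$ on $[0,1]$ with periodic boundary conditions grow like $(2\pi k)^2$. Substituting $(\star)$ back into the intermediate bound yields \fref{ninfty}.

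The main obstacle I anticipate is the spectral inequality $(\star)$: the weights in Cauchy-Schwarz must be tuned so that one factor exactly reproduces the kinetic energy entering $\|\varrho\|_\calE$, while the other factor remains summable over any orthonormal family in $L^2(0,1)$. This latter summability is essentially the trace-class property of $(1+H)^{-1}$ in one dimension, and is the step that will not generalize verbatim to higher-dimensional settings.
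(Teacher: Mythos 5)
Your proof is correct, but it takes a genuinely different route from the paper's. The paper never reduces to $\varrho\geq 0$: working directly with the spectral decomposition $(\rho_k,\phi_k)$, it bounds $\|n[\varrho]\|_{L^\infty}\leq\sum_k|\rho_k|\|\phi_k\|_{L^\infty}^2\leq C\sum_k|\rho_k|\|\phi_k\|_{\H^1}$, splits by Cauchy-Schwarz into $(\sum_k|\rho_k|)^{1/2}\|\varrho\|_{\calE}^{1/2}$, and then interpolates $\sum_k|\rho_k|\leq(\sum_k|\rho_k|^{2/3})^{3/4}(\sum_k|\rho_k|^2)^{1/4}$, controlling the $2/3$-quasinorm via the Lieb--Thirring bound of Lemma \ref{lieb}. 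The gradient estimate is obtained similarly from $\|\nabla n\|_{L^2}\leq 2\sum_k|\rho_k|\|\phi_k\|_{L^\infty}\|\nabla\phi_k\|_{L^2}$ and H\"older with exponents $4$ and $4/3$. You instead pass through the Jordan decomposition $\varrho=\varrho_+-\varrho_-$ in order to invoke the Hoffmann--Ostenhof bound \fref{sqrt} for $\sqrt{n}$ (which requires positivity), and you replace the $2/3$-Schatten interpolation by the weighted Cauchy--Schwarz inequality $(\star)$, which packages the same one-dimensional spectral information as $\Tr(1+H)^{-1}<\infty$ rather than as $\sum_p\lambda_p[H]^{-2}<\infty$. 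Both schemes are equivalent in strength and equally tied to $d=1$; the paper's is slightly leaner because it avoids the splitting into $\varrho_\pm$, while your $(\star)$ is a self-contained spectral lemma that avoids any appeal to the $\calJ_{2/3}$ quasinorm and Lemma \ref{lieb}, and your derivation of \fref{gradnl2} from $\nabla n=2\sqrt{n}\,\nabla\sqrt{n}$ together with the already-proved $L^\infty$ bound is shorter than the paper's separate H\"older computation.
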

\begin{proof} Denote by $(\rho_k,\phi_k)_{k\in \NN^*}$ the spectral elements of $\varrho$. For the first estimate, we have, using \fref{gag},
\bee
\|n[\varrho]\|_{L^\infty} &\leq & \sum_{k \in \NN^*} |\rho_k| \|\phi_k\|^2_{L^\infty} \leq C \sum_{k \in \NN^*} |\rho_k| \|\phi_k\|_{\H^1}\\
&\leq & C\left(\sum_{k \in \NN^*} |\rho_k|\right)^{1/2} \left(\sum_{k \in \NN^*} |\rho_k|\|\phi_k\|^2_{\H^1}\right)^{1/2}.\\
\eee
The second term in the r.h.s. is controlled by $\|\varrho\|^{1/2}_{\calE}$, while we write for the first one
\bee
\sum_{k \in \NN^*} |\rho_k| &\leq& \left(\sum_{k \in \NN^*} |\rho_k|^{2/3} \right)^{3/4} \left(\sum_{k \in \NN^*} |\rho_k|^2 \right)^{1/4}\\
&\leq& \left( \Tr \sqrt{H} |\varrho| \sqrt{H}\right)^{1/2} \|\varrho\|_{\calJ_2}^{1/2},
\eee
where we used Lemma \ref{lieb} in the second line. Regarding the second estimate, we have directly, using once more \fref{gag},
\bee
\|\nabla n[\varrho]\|_{L^2} &\leq & 2 \sum_{k \in \NN^*} |\rho_k| \|\phi_k\|_{L^\infty}\|\nabla \phi_k\|_{L^2} \leq C \sum_{k \in \NN^*} |\rho_k| \| \phi_k\|^{3/2}_{\H^1} \\
&\leq & C \left(\sum_{k \in \NN^*} |\rho_k|\right)^{1/4} \left(\sum_{k \in \NN^*} |\rho_k|\|\phi_k\|^2_{\H^1}\right)^{3/4}.\\
\eee
\end{proof}

The next proposition shows the continuity of the application $n \to \varrho[n]$ and is a crucial ingredient in the proof of Theorem \ref{thliou}. It is mostly a consequence of the convexity of the free energy and relies on the characterization of the minimizer as a quantum Maxwellian.

\begin{proposition} \label{contsol} Let $n_1$ and $n_2$ two densities in $\Hunper$ such that, for $i \in \{1,2\}$, $\Delta n_i$ belongs to $L^2$, and $n_i(x) > 0$, $\forall x \in [0,1]$. Let $\varrho[n_i]=e^{-(H+A_i)}$ be the solution to the moment problem for the density $n_i$. Then, the following inequality holds: 
$$
 \|\varrho[n_1]-\varrho[n_2]\|_{\calJ_2}^2 \leq C (A_2-A_1,n_1-n_2),
$$
where the constant $C$ is independent of $n_1$ and $n_2$.
\end{proposition}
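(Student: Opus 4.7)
The plan is to identify the right-hand side $(A_2-A_1, n_1-n_2)$ with a trace of the form $\Tr\bigl((\log\varrho_1 - \log\varrho_2)(\varrho_1-\varrho_2)\bigr)$, and then to control $\|\varrho_1-\varrho_2\|_{\calJ_2}^2$ from above by this trace via a Klein-type operator inequality. The former is purely algebraic, resting on the characterization of $\varrho_i := \varrho[n_i]$ as a quantum Maxwellian; the latter is the analytic core of the proof.

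Since by Theorem~\ref{theo1} and Proposition~\ref{regmin} we have $\varrho_i = e^{-(H+A_i)}$ with $A_i \in L^2$ (Theorem~\ref{regA}, using $\Delta n_i \in L^2$), the Hamiltonian $H+A_i$ is well-defined as a self-adjoint operator and $\log\varrho_i = -(H+A_i)$ holds as an operator identity. Subtracting yields $\log\varrho_1 - \log\varrho_2 = A_2-A_1$, i.e.\ multiplication by an $L^2$ function. Using the spectral decomposition of $\varrho_i$ together with $H\varrho_i \in \calJ_1$ (Proposition~\ref{regmin}) to justify the trace manipulations, I would then derive
\begin{equation*}
\Tr\bigl((\log\varrho_1 - \log\varrho_2)(\varrho_1-\varrho_2)\bigr) = \Tr\bigl((A_2-A_1)(\varrho_1-\varrho_2)\bigr) = (A_2-A_1,\, n_1-n_2),
\end{equation*}
the last equality being the defining relation of the local density applied to the self-adjoint operator $\varrho_1-\varrho_2 \in \calJ_1$.

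The key analytic step is the operator estimate: for bounded positive operators with $M := \max(\|\varrho_1\|_{\calL(L^2)}, \|\varrho_2\|_{\calL(L^2)})$,
\begin{equation*}
\|\varrho_1-\varrho_2\|_{\calJ_2}^2 \leq M \,\Tr\bigl((\log\varrho_1 - \log\varrho_2)(\varrho_1-\varrho_2)\bigr).
\end{equation*}
I would prove this by writing $\log\varrho_1-\log\varrho_2 = \int_0^1 \frac{d}{ds}\log P_s\, ds$ with $P_s := s\varrho_1+(1-s)\varrho_2$, and using the standard Fr\'echet derivative identity $\frac{d}{ds}\log P_s = \int_0^\infty (P_s+t)^{-1}(\varrho_1-\varrho_2)(P_s+t)^{-1}\, dt$. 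Inserting into the trace produces a double integral of the non-negative quantity $\Tr\bigl((P_s+t)^{-1}C(P_s+t)^{-1}C\bigr)$, where $C:=\varrho_1-\varrho_2$ is self-adjoint. Since $P_s \leq M I$ gives $(P_s+t)^{-1} \geq (M+t)^{-1}I$, and since $\Tr(R_1 C R_2 C) \geq \Tr(R_1'CR_2'C)$ whenever $R_j \geq R_j' \geq 0$ (using the factorization $\Tr(R_1 C R_2 C)=\|R_2^{1/2}CR_1^{1/2}\|_{\calJ_2}^2$ and a two-step comparison), the integrand is bounded from below by $(M+t)^{-2}\|C\|_{\calJ_2}^2$; integrating $\int_0^\infty (M+t)^{-2}dt = 1/M$ produces the claimed prefactor. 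Finally, $M \leq \max(\Tr\varrho_1, \Tr\varrho_2) = \max(\|n_1\|_{L^1}, \|n_2\|_{L^1})$ is absorbed into the constant $C$.

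The main obstacle is this operator inequality: the lack of commutation between $\varrho_1$ and $\varrho_2$ prevents a direct reduction to the scalar identity $(\log a-\log b)(a-b) \geq (a-b)^2/\max(a,b)$, and the whole argument therefore rests on the integral representation of the operator logarithm together with the non-commutative trace monotonicity $\Tr(R_1CR_2C)\geq\Tr(R_1'CR_2'C)$. The algebraic identity of the preceding step is by contrast essentially a reformulation of the Euler--Lagrange equation $\log\varrho_i+H+A_i=0$ satisfied by the minimizer, which makes sense once $A_i \in L^2$ has been established through Theorem~\ref{regA}.
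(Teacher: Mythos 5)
Your proof is correct in substance and takes a genuinely different route from the paper's. The paper argues in two stages: it first invokes the Lewin--Sabin version of the Klein inequality (Theorem~\ref{klein}) with $\varphi=\beta$ to dominate $\|\varrho_1-\varrho_2\|_{\calJ_2}^2$ by the Bregman divergence $\Tr\big(\beta(\varrho_1)-\beta(\varrho_2)-\log(\varrho_2)(\varrho_1-\varrho_2)\big)$, which it rewrites as $F(\varrho_1)-F(\varrho_2)+(A_2,n_1-n_2)$; it then uses the Euler--Lagrange identity $F(\varrho_i)=-(A_i+1,n_i)$ together with a second, unbounded Klein inequality (Theorem~\ref{klein2}) applied to $e^{-x}$ and $H_i=H+A_i$ to bound that expression by $(A_2-A_1,n_1-n_2)$. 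You instead observe directly that $\log\varrho_1-\log\varrho_2=A_2-A_1$ so that the \emph{symmetrized} Bregman divergence $\Tr\big((\log\varrho_1-\log\varrho_2)(\varrho_1-\varrho_2)\big)$ is exactly $(A_2-A_1,n_1-n_2)$, and you prove the coercivity bound $\|\varrho_1-\varrho_2\|_{\calJ_2}^2\leq M\,\Tr\big((\log\varrho_1-\log\varrho_2)(\varrho_1-\varrho_2)\big)$ from scratch via the integral representation of the operator logarithm, the positivity of $\Tr(R_1CR_2C)$, and its monotonicity in $R_1,R_2$. This replaces both Klein inequalities by one elementary and self-contained argument; the exact identity for the symmetrized divergence is also cleaner than the paper's chain of inequalities. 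What you give up is the guarantee that all the operator-theoretic manipulations are licensed: the Fr\'echet-derivative/resolvent representation of $\log$ is standard for positive operators bounded \emph{away} from zero, but here $\varrho_i$ and hence $P_s$ are compact with $0$ in the spectrum, so the formula and the interchange of trace and $\int_0^\infty dt$ near $t=0$ need a regularization (e.g.\ replace $\varrho_i$ by $\varrho_i+\eps$ and pass to the limit), which you do not carry out; similarly the identification $\Tr\big((A_2-A_1)(\varrho_1-\varrho_2)\big)=(A_2-A_1,n_1-n_2)$ for $A_i$ only in $L^2$ relies on $\varrho_i\in\calH$ exactly as in the paper's verification that $A_j\varrho_i\in\calJ_1$, and should be flagged as such. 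Finally, your constant $M\leq\max(\|n_1\|_{L^1},\|n_2\|_{L^1})$ does depend on the data; this is in tension with the wording ``independent of $n_1$ and $n_2$,'' although the paper's own constant, coming from the Lewin--Sabin theorem on $[0,M]$, carries the same implicit $M$-dependence, and the downstream Corollary~\ref{corcont} is stated with an $M$-dependent constant anyway, so this is a shared imprecision rather than a defect peculiar to your argument.
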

\begin{proof} The proof hinges  upon two versions of the Klein inequality. The first result applies to unbounded self-adjoint operators.
\begin{theorem}[\cite{ruelle}, Chap. 2, \S2.5.2 (Klein inequality I)]\label{klein2} Let $\varphi$ be a convex function. Then, for two (unbounded) self-adjoint operators $H_1$ and $H_2$ with spectrum in the domain of definition of $\varphi$, the following inequality holds:
$$
\Tr\left(\varphi(H_1)-\varphi(H_2)-(H_1-H_2)\varphi'(H_2)\right) \geq 0.
$$
\end{theorem}
The second result refines the first result when the operators are bounded. 
\begin{theorem} [\cite{LS}, Theorem 3 (Klein inequality II)] \label{klein} Let $\varphi \in \calC^0([0,M],\RR)$ such that $\varphi'$ is not constant and is operator monotone on $(0,M)$. Then, for two nonnegative bounded self-adjoint operators $\varrho_1$ and $\varrho_2$ with spectrum in $[0,M]$, the following inequality holds:
$$
\Tr\left(\varphi(\varrho_1)-\varphi(\varrho_2)-\varphi'(\varrho_2)(\varrho_1-\varrho_2)\right) \geq C\Tr\left((1+|\varphi'(\varrho_2)|)(\varrho_1-\varrho_2)^2\right).
$$
Above, the constant $C>0$ only depends on $\varphi$.
\end{theorem}
Note that Theorem \ref{klein} is stated for $M=1$ in \cite{LS} but holds for any $M>0$. Let us apply this theorem to the entropy function $\varphi(x)=\beta(x)= x \log x-x$.
To this aim, let us first verify that the term
$$
T:=\Tr\big(\varphi'(\varrho_2)(\varrho_1-\varrho_2)\big)
$$
is finite. We have $\varphi'(\varrho_2)=-H-A_2$ and recall that, by Proposition \ref{regmin}, we have $H\varrho_1\in \calJ_1$ and $H\varrho_2\in \calJ_1$. Let us check that $A_j \varrho_i\in \calJ_1$ for $i,j\in\{1,2\}$. Using the polar decomposition $A_j \varrho_i= U|A_j \varrho_i|$, where $U$ is a partial isometry on $L^2$, we compute
\bee
\Tr |A_j \varrho_i|&=&\Tr \big(U^* A_j \varrho_i \big)=\Tr \big(U^* A_j (H+\II)^{-1}(H+\II) \varrho_i(H+\II)(H+\II)^{-1} \big)\\
&\leq& \|U^* A_j (H+\II)^{-1}\|_{\calL(L^2)} \Tr \big((H+\II) \varrho_i(H+\II)\big)\|(H+\II)^{-1}\|_{\calL(L^2)}\\
&\leq& C\|A_j\|_{L^2}\Tr \big((H+\II) \varrho_i(H+\II)\big)<\infty,
\eee
where we used \eqref{estimFA} with $\alpha=a=1$, Theorem \ref{regA}, and the fact that $(H+\II)^{-1}$ is a bounded operator from $L^2(0,1)$ to $L^\infty(0,1)$. Therefore, we have $A_j \varrho_i\in \calJ_1$. Moreover, if we introduce a regularization $A^k_j\in L^\infty(0,1)$ such that $A^k_j\to A_j$ as $k\to +\infty$, the same calculation yields
$$\left|\Tr \left((A_j^k-A_j)\varrho_i\right)\right|\leq C\|A_j^k-A_j\|_{L^2}\Tr \big((H+\II) \varrho_i(H+\II)\big)\to 0\mbox{ as }k\to +\infty,$$
thus
$$\Tr \left(A_j\varrho_i\right)=\lim_{k\to +\infty}\Tr \left(A_j^k\varrho_i\right)=\lim_{k\to +\infty}\left(A_j^k,n_i\right)=\left(A_j,n_i\right).$$
We have proved in particular that 
$$
T=-\Tr\big((H+A_2)(\varrho_1-\varrho_2)\big)=-\Tr\big(H (\varrho_1-\varrho_2)\big)-(A_2,n_1-n_2),
$$
where all the terms are finite. Noticing that
$$
\Tr\big(H \varrho_i\big)=\Tr\big(\varrho_i H\big)=\Tr\big(\sqrt{H} \varrho_i \sqrt{H}\big)
$$
and introducing the free energies $F(\varrho_i)$, we deduce from Theorem \ref{klein} with $\varphi(x)=\beta(x)= x \log x-x$ that
\begin{align}
\|\varrho[n_1]-\varrho[n_2]\|_{\calJ_2}^2&\leq \Tr\left((1+|\varphi'(\varrho_2)|)(\varrho_1-\varrho_2)^2\right)\nonumber\\
& \leq \Tr\big(\beta(\varrho_1)-\beta(\varrho_2)-\log(\varrho_2)(\varrho_1-\varrho_2)\big) \nonumber\\
&\quad =F(\varrho_1)-F(\varrho_2)+(A_2,n_1-n_2).\label{a7}
\end{align}
We can then recast $F(\varrho_i)$ in terms of $A_i$ and $n_i$ as follows:
\bee
F(\varrho_i)&=& \Tr \big( \varrho_i \log \varrho_i -\varrho_i\big) + \Tr \big( \varrho_i H\big)\\
&=&-\Tr \big( \varrho_i H\big) -(A_i+1,n_i)+ \Tr \big( \varrho_i H\big)\\
&=&-(A_i+1,n_i).
\eee
As a consequence,
\begin{align*}
&&F(\varrho_1)-F(\varrho_2)+(A_2,n_1-n_2)=(A_2+1,n_2)-(A_1+1,n_1)+(A_2,n_1-n_2)\\
&&=(A_2-A_1,n_1-n_2)+(A_2-A_1,n_2)+(1,n_2-n_1).
\end{align*}
We apply now Theorem \ref{klein2} to the operators $H_i:=H+A_i$, $i \in \{1,2\}$, with $\varphi(x)=e^{-x}$, and $\varphi(H_i)=\varrho[n_i]$. Note that, according to the Kato-Rellich theorem, $H_i$ is self-adjoint with domain $D(H)$ since $A_i \in L^2$ (we use here Theorem \ref{regA}). It follows that
$$
\Tr\big(\varrho[n_1]-\varrho[n_2]+(A_1-A_2)\varrho[n_2]\big) \geq 0,
$$
which is equivalent to
$$
(1,n_1-n_2) +(A_1-A_2,n_2)\geq 0.
$$
Therefore
$$
F(\varrho_1)-F(\varrho_2)+(A_2,n_1-n_2) \leq (A_2-A_1,n_1-n_2),
$$
and \eqref{a7} enables to conclude the proof.

\end{proof}

\begin{remark} \label{rem} Proposition \ref{contsol} is stated in terms of the solutions to the moment problem with constraints $n_1$ and $n_2$. It could have been directly stated in terms of quantum Maxwellians $\varrho_i=\exp(-(H+A_i))$, with only assumptions $A_i \in L^2$, $i \in \{1,2\}$, and with $n_i$ equal to the local trace of $\varrho_i$. In that context, it follows easily from \fref{estimdeltaL1}-\fref{controlmu}-\fref{estimgrad} that $\varrho_i \in \calH$, and from \fref{ninfty}-\fref{gradnl2} that $n_i \in \H^1$, which shows that the term $(A_2-A_1,n_1-n_2)$ is well-defined.
\end{remark} 

As an application of the previous proposition, we estimate the difference $n_1-n_2$ when $n_i$ is the local density associated to an operator $\varrho_i \in \calH$.

\begin{corollary} \label{corcont} Let $\varrho_1$ and $\varrho_2$ two density operators in $\calH$. Let $M\in(0,\infty)$ be such that 
$$
\|\varrho_1 \|_{\calH}+\|\varrho_2\|_{\calH} \leq M, \quad \textrm{and} \quad  M^{-1}\leq n_i(x), \qquad \forall x \in [0,1], \qquad i=1,2,
$$
where $n_i$ is the local density associated to $\varrho_i$. Denote by $\varrho[n_i]$ the solution to the moment problem with density constraint $n_i$. Then, there exists a constant $C_{M}$, that only depends on $M$, such that
$$ \|\varrho[n_1]-\varrho[n_2]\|_{\calJ_2} \leq C_M \|\varrho_1-\varrho_2\|^{1/8}_{\calJ_2}.
$$
\end{corollary}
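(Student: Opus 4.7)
The plan is to combine three ingredients already established in the paper: Proposition \ref{contsol} (which turns a $\calJ_2$-difference of minimizers into a duality pairing between the chemical potentials and the densities), Theorem \ref{regA} together with Lemma \ref{H2n} (to control $\|A_i\|_{L^2}$ by an $M$-dependent constant), and Lemma \ref{LT} (to control the $L^\infty$ norm of $n_1-n_2$ by a fractional power of $\|\varrho_1-\varrho_2\|_{\calJ_2}$).

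First, I would check that the hypotheses of Proposition \ref{contsol} are satisfied. Both $\varrho_1$ and $\varrho_2$ are self-adjoint and nonnegative, with $\|\varrho_i\|_{\calH}\leq M$, so by Lemma \ref{H2n}
$$
\|\Delta n_i\|_{L^2}\leq C\|\varrho_i\|_{\calE}^{1/2}\big(\Tr H\varrho_i H\big)^{1/2}\leq C M,
$$
and by assumption $n_i\geq M^{-1}>0$. In particular $n_i\in \Hunper$ (via \eqref{estW11}) and the hypotheses of Theorem \ref{regA} and Proposition \ref{contsol} hold. Proposition \ref{contsol} then gives
$$
\|\varrho[n_1]-\varrho[n_2]\|_{\calJ_2}^2\leq C\,(A_2-A_1,\,n_1-n_2).
$$

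Second, I would bound the right-hand side via Cauchy--Schwarz,
$$
(A_2-A_1,n_1-n_2)\leq \big(\|A_1\|_{L^2}+\|A_2\|_{L^2}\big)\|n_1-n_2\|_{L^2}.
$$
The chemical potentials are controlled by Theorem \ref{regA}: the functionals $\mathfrak{H}_0(n_i)$ and $\mathfrak{H}_1(n_i)$ depend only on $\|n_i\|_{L^1}$, $\|\sqrt{n_i}\|_{\H^1}$ and $\underline{n}_i$, all of which are bounded by functions of $M$ (using $\|\varrho_i\|_{\calE}\leq M$, $n_i\geq M^{-1}$ and \eqref{sqrt}); together with the bound on $\|\Delta n_i\|_{L^2}$ above, this yields $\|A_i\|_{L^2}\leq C_M$.

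Third, I would bound $\|n_1-n_2\|_{L^2}\leq \|n_1-n_2\|_{L^\infty}$ (the domain has measure one) and apply \eqref{ninfty} of Lemma \ref{LT} to the self-adjoint operator $\varrho_1-\varrho_2\in\calE$:
$$
\|n_1-n_2\|_{L^\infty}\leq C\|\varrho_1-\varrho_2\|_{\calJ_2}^{1/4}\|\varrho_1-\varrho_2\|_{\calE}^{3/4}\leq C_M\,\|\varrho_1-\varrho_2\|_{\calJ_2}^{1/4},
$$
since $\|\varrho_1-\varrho_2\|_{\calE}\leq 2M$. Chaining the three estimates,
$$
\|\varrho[n_1]-\varrho[n_2]\|_{\calJ_2}^2\leq C_M\,\|\varrho_1-\varrho_2\|_{\calJ_2}^{1/4},
$$
and taking the square root yields the exponent $1/8$ of the statement.

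The argument is essentially a bookkeeping exercise once Proposition \ref{contsol} is in hand; the only subtle point is the chain of dependencies on $M$: one must verify that every intermediate quantity (the $\H^1$ norm of $\sqrt{n_i}$, the $L^2$ norm of $\Delta n_i$, the lower bound on $n_i$, and eventually $\|A_i\|_{L^2}$) is bounded purely in terms of $M$, so that the final constant $C_M$ depends only on $M$ and not on the operators $\varrho_i$ themselves. No other difficulty appears.
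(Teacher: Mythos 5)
Your proof is correct and follows essentially the same route as the paper: apply Proposition \ref{contsol}, bound the pairing $(A_2-A_1,n_1-n_2)$ via Cauchy--Schwarz using the uniform $L^2$ bound on $A_i$ from Theorem \ref{regA} combined with Lemma \ref{H2n}, and control $\|n_1-n_2\|_{L^\infty}$ via the Lieb--Thirring-type estimate \eqref{ninfty}. The only minor slip is the citation of \eqref{estW11} to conclude $n_i\in\Hunper$ — that estimate only gives $W^{1,1}$ control; the $\H^1$ regularity of $n_i$ actually follows from \eqref{gradnl2} or directly from Lemma \ref{H2n} — but this does not affect the argument.
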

\begin{proof}
 First of all, since $\varrho_i \in \calH$, we deduce from Lemma \ref{H2n} that $\Delta n_i \in L^2$. Theorem \ref{regA} then shows that $A_i \in L^2$ with $\|A_i\|_{L^2} \leq C_M$. From Proposition \ref{contsol}, we have
$$
\|\varrho[n_1]-\varrho[n_2]\|_{\calJ_2}^2 \leq 2 C_M \|n_1-n_2\|_{L^2}. 
$$
Estimate \fref{ninfty} then yields
$$
\|n_1-n_2\|_{L^2} \leq \|n_1-n_2\|_{L^\infty} \leq C \| \varrho_1-\varrho_2\|_{\calJ_2}^{1/4}\| \varrho_1-\varrho_2\|_{\calE}^{3/4}.
$$
Since $\varrho_i \in \calH \subset \calE$, this yields the desired result. 
\end{proof}

\section{Proof of Theorem \ref{thliou}} \label{proofthliou}
In this section, we prove the existence of solution of the Liouville-BGK equation \eqref{liouville2}. The idea is to define a sequence $(\varrho_k)_{k \in \NN}$ of density operators such that $\varrho_0:=\varrho^0$ (the initial condition), and, for $k \geq 0$,
\be \label{mild}
 \partial_t \varrho_{k+1}=L_0(\varrho_{k+1})-\frac{1}{\tau}(\varrho_{k+1}-\varrho_e[\varrho_k]) \qquad \textrm{on } [0,T].
\ee
Above, $\varrho_e[\varrho_{k}]$ denotes the solution to the moment problem with constraint $n[\varrho_e[\varrho_{k}]]=n[\varrho_k]$, and $L_0(\varrho)=-i (\overline{H \varrho-\varrho H})$ is the infinitesimal generator of the group $\calL(t)\varrho=e^{-iHt} \varrho e^{iHt}$ defined in Section \ref{main}. The proof then goes as follows: the first step is to show that the sequence $(\varrho_k)_{k \in \NN}$ is well-defined; this amounts to solve the moment problem for each $k$ in order to obtain $\varrho_e[\varrho_k]$. For this, we need a bound from below on the density $n[\varrho_k]$. Once $\varrho_e[\varrho_k]$ is constructed, $\varrho_{k+1}$ is obtained from the following simple result given without proof (see \cite{arnold}, it is just a consequence of the fact that $\calL$ is an isometry on $\calJ_1$, $\calE$ or $\calH$):
\begin{lemma} \label{anton}
 Let $\varrho^0\in \calH$ and $\hat \varrho \in L^1((0,T),\calH)$. Then, the following density operator 
$$
\varrho(t)=e^{-\frac{t}{\tau}}\calL(t) \varrho^0+\int_0^t e^{-\frac{t-s}{\tau}}\calL(t-s) \hat \varrho(s) ds
$$
is the unique classical solution $\varrho \in \calC^0([0,T],\calH)$ of
$$ \partial_t \varrho_{k+1}=L_0(\varrho_{k+1})-\frac{1}{\tau}(\varrho_{k+1}-\hat \varrho) \qquad \textrm{on } [0,T].$$
Moreover, if $E$ is one of the following spaces, $\calJ_1$, $\calE$ or $\calH$, then this solution satisfies the estimate
\begin{equation}
\|\varrho(t)\|_{E} \leq e^{-\frac{t}{\tau}} \| \varrho^0\|_{E} +\int_0^t e^{-\frac{t-s}{\tau}} \|\hat \varrho(s)\|_{E}\, ds.
\label{estianton}
\end{equation}
\end{lemma}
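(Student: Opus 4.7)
The plan is to deduce the lemma from the fact that $\calL(t)$ is a $\calC^0$-group of isometries on each of $\calJ_1$, $\calE$, and $\calH$. Since $U(t):=e^{-iHt}$ is unitary on $L^2$ and commutes with every spectral projection of $H$, the operators $|\calL(t)\varrho|$, $\sqrt{H}|\calL(t)\varrho|\sqrt{H}$ and $H|\calL(t)\varrho|H$ are unitarily equivalent to $|\varrho|$, $\sqrt{H}|\varrho|\sqrt{H}$ and $H|\varrho|H$ respectively, so taking traces yields $\|\calL(t)\varrho\|_E=\|\varrho\|_E$ for $E\in\{\calJ_1,\calE,\calH\}$. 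Strong continuity of $t\mapsto\calL(t)\varrho$ in each of these spaces follows from the strong continuity of $U(t)$ on $L^2$ together with a density argument using finite-rank operators built from the eigenbasis of $H$.

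Once these facts are in hand, the estimate \eqref{estianton} is a direct consequence of the triangle inequality applied to the Bochner integral representation of $\varrho(t)$, and the continuity $\varrho\in\calC^0([0,T],\calH)$ follows from standard results on convolutions of $L^1$ functions with strongly continuous groups. To check that $\varrho$ is a classical solution, I first verify the embedding $\calH\subset D(L_0)$: if $\sigma\in\calH$, the polar decomposition argument used at the end of the proof of Proposition \ref{regmin} (together with the boundedness of $H(H+\II)^{-1}$ on $L^2$) shows that $H\sigma$ and $\sigma H$ both belong to $\calJ_1$, which implies $\sigma D(H)\subset D(H)$ and makes $L_0\sigma=-i(H\sigma-\sigma H)\in\calJ_1$ well defined. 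Differentiating the Duhamel representation in $\calJ_1$ then gives
$$\partial_t\varrho(t)=-\tfrac{1}{\tau}e^{-t/\tau}\calL(t)\varrho^0+L_0\big(e^{-t/\tau}\calL(t)\varrho^0\big)+\hat\varrho(t)-\tfrac{1}{\tau}\psi(t)+L_0\psi(t),$$
where $\psi(t)$ denotes the integral term; using that $L_0$ commutes with $\calL(t)$ on $\calH$ and regrouping yields exactly $\partial_t\varrho=L_0\varrho-\tau^{-1}(\varrho-\hat\varrho)$.

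Uniqueness follows by considering the difference $\omega$ of two classical solutions sharing the same data: the function $v(t):=e^{t/\tau}\omega(t)$ is a classical solution of the free Liouville equation $\partial_t v=L_0 v$ with $v(0)=0$, and since $\calL$ is an isometry on $\calJ_1$ one must have $v\equiv 0$. The only genuinely delicate point in the whole argument is the rigorous handling of the differentiation under the integral sign for the Duhamel term, which requires $\hat\varrho$ to take values in $\calH\subset D(L_0)$ in an integrable way; this is precisely the hypothesis $\hat\varrho\in L^1((0,T),\calH)$, and all remaining manipulations reduce to standard properties of strongly continuous groups as developed in \cite[Ch.~5]{dlen5}.
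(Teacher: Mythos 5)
Your proof is correct in substance and follows the approach the authors have in mind: reduce everything to the fact that $\calL(t)=e^{-iHt}\,\cdot\,e^{iHt}$ is a strongly continuous group of isometries on $\calJ_1$, $\calE$, and $\calH$ (using that $e^{-iHt}$ commutes with functions of $H$, so $|\calL(t)\varrho|$, $\sqrt{H}|\calL(t)\varrho|\sqrt{H}$ and $H|\calL(t)\varrho|H$ are unitarily conjugate to the corresponding quantities for $\varrho$), then invoke the standard Duhamel/semigroup machinery. The paper itself gives no proof and simply refers to \cite{arnold}, so there is no ``official'' argument to compare against; yours supplies the missing details along the expected lines. Your observation that $\calH\subset D(L_0)$ (via $H\sqrt{|\sigma|}\in\calJ_2$, hence $H\sigma=(H\sqrt{|\sigma|})(\sqrt{|\sigma|}U^*)\in\calJ_1$ and similarly for $\sigma H$, after splitting a self-adjoint $\sigma$ into positive and negative parts) is the right ingredient, and your uniqueness argument via the gauge transformation $v=e^{t/\tau}\omega$ and the constancy of $\calL(-t)v(t)$ is clean.

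One point deserves a caveat, although it is inherited from the lemma's own statement rather than introduced by you. With $\hat\varrho$ merely in $L^1((0,T),\calH)$, the identity
$\partial_t\varrho=L_0\varrho-\tau^{-1}(\varrho-\hat\varrho)$
holds only for almost every $t$, and $\partial_t\varrho\in L^1((0,T),\calJ_1)$ rather than $\calC^0([0,T],\calJ_1)$ — so in the paper's own terminology (which requires $\partial_t\varrho$ continuous) what one obtains is a strong solution a.e., not a classical solution, unless one also assumes $\hat\varrho\in\calC^0([0,T],\calJ_1)$. Your remark that ``this is precisely the hypothesis $\hat\varrho\in L^1((0,T),\calH)$'' slightly overstates what that hypothesis buys. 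In the paper's application the source term is in fact continuous (it is $\varrho_e[\varrho_k]$ with $\varrho_k\in\calC^0([0,T],\calH)$, whose continuity is established by the continuity of the moment map), so the gap is harmless there, but it is worth flagging.
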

The second step is to derive a uniform bound for $\varrho_k$ in $\calH$ that will provide some compactness. Using \fref{estimsolmom1} and the estimate of the lemma above, we show first that $\varrho_k(t)$ is uniformly bounded in $\calE$. This allows us to control the terms $\mathfrak{H}_0$ and $\mathfrak{H}_1$ in Proposition \ref{severalestimates} and, as a consequence, to control $A_k(t)$ (the chemical potential such that $\varrho_e[\varrho_k(t)]=\exp -(H+A_k(t))$)  uniformly in $\H_{per}^{-1}$. Combining then \fref{estimFA}, Theorem \ref{regA}, and Lemma \ref{H2n}, we obtain a \textit{sublinear} estimate of the form
$$
\|\varrho_e[\varrho_{k}(s)]\|_{\calH} \leq C \|\varrho_{k}(s)]\|_{\calH}.
$$
The last step is to pass to the limit in \fref{mild} using compactness arguments. The key ingredient is the continuity the map $\varrho \mapsto \varrho_e[\varrho]$ in the space $\calJ_2$ when $\varrho \in \calH$.

\subsection*{Step 1: Construction of the sequence $(\varrho_k)_{k \in \NN}$ and uniform bounds} In this first step, we prove the following proposition.

\begin{proposition} \label{unifbound}There exists a unique sequence $(\varrho_k)_{k \in \NN} \in \calC^0([0,T],\calH)$ defined by \fref{mild},  that satisfies the following uniform bound, for some constant $M$ independent of $k$,
\be \label{estimkH}
\sup_{t\in[0,T]}\|\varrho_k(t) \|_{\calH} \leq M, \qquad \forall k \in \NN,
\ee
and the following lower bound on the local density
$$
n[\varrho_k(t)](x) \geq \underline{n}>0, \quad \forall k \in \NN, \quad \forall x \in [0,1], \quad \forall t\in[0,T],
$$
where the constant $\underline{n}$ is independent of $k$. Finally, the solution $\varrho_e[\varrho_k(t)]$ to the moment problem with constraint $n[\varrho_k(t)]$ belongs to $\calH$ for every  $t \in [0,T]$ and verifies the estimate
\be \label{estimrhoe}
 \forall k \in \NN, \qquad \sup_{t\in[0,T]}\|\varrho_e[\varrho_k(t)] \|_{\calH} \leq M'',
\ee
for some $M''>0$ independent of $k$.
\end{proposition}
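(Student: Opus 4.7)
The plan is to proceed by induction on $k$, establishing simultaneously the $\calC^0([0,T],\calH)$ regularity of $\varrho_k$, the pointwise lower bound on $n[\varrho_k]$, and the uniform $\calH$-bounds \fref{estimkH}--\fref{estimrhoe}. The base case $\varrho_0(t) \equiv \varrho^0$ is covered directly by Assumption \ref{ass1} and the discussion following it. For the inductive step at level $k$, Theorem \ref{theo1} produces the minimizer $\varrho_e[\varrho_k(t)]$ at each $t$; Corollary \ref{corcont} yields $\calJ_2$-continuity of $t\mapsto\varrho_e[\varrho_k(t)]$; together with the $\calH$-bound to be derived below (and strong measurability in $\calH$, inherited from the $\calJ_2$-continuity by Pettis' theorem using separability of $\calH$), this gives $\varrho_e[\varrho_k(\cdot)]\in L^1((0,T),\calH)$, so that Lemma \ref{anton} produces $\varrho_{k+1}\in \calC^0([0,T],\calH)$.

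For the lower bound, I would exploit the mild formulation and the decomposition $\varrho^0=f(H)+\delta\rho$ from Assumption \ref{ass1}. Since $f(H)$ commutes with $e^{\pm iHt}$, one has $\calL(t)f(H)=f(H)$, and because the eigenfunctions of $H$ on the periodic interval are plane waves of constant modulus, $n[f(H)]$ is a strictly positive \emph{constant}, bounded below by $\gamma$. The integral term $\tau^{-1}\int_0^t e^{-(t-s)/\tau}\calL(t-s)\varrho_e[\varrho_k(s)]\,ds$ has pointwise non-negative density, because $\calL(t-s)$ preserves positivity. Bounding the remaining term $\calL(t)\delta\rho$ via \fref{estW11} and $\H^1\hookrightarrow L^\infty$ by $\|n[\calL(t)\delta\rho]\|_{L^\infty}\leq 2\|\delta\rho\|_\calE=\eps\gamma$, I obtain
\begin{equation*}
n[\varrho_{k+1}(t)](x)\;\geq\;e^{-t/\tau}(1-\eps)\gamma\;\geq\;e^{-T/\tau}(1-\eps)\gamma\;=:\;\underline{n},
\end{equation*}
uniformly in $k$, $t\in[0,T]$ and $x$.

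Next I would derive a uniform $\calE$-bound by a linear Gronwall argument. Mass conservation $\|n[\varrho_k(t)]\|_{L^1}=\Tr\varrho^0$ follows by induction using $\Tr\varrho_e[\varrho_k]=\Tr\varrho_k$ and the mild formula. Together with \fref{sqrt} and Proposition \ref{severalestimates}, this gives $\|\varrho_e[\varrho_k(t)]\|_\calE\leq C\mathfrak{H}_0(n[\varrho_k(t)])\leq C(1+\|\varrho_k(t)\|_\calE)$. Inserting into \fref{estianton} with $E=\calE$ produces a Volterra-type inequality whose Gronwall resolution yields $\sup_{k,\,t\in[0,T]}\|\varrho_k(t)\|_\calE\leq M'$. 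Combined with $n[\varrho_k(t)]\geq\underline{n}$, this bounds $\mathfrak{H}_1(n[\varrho_k(t)])$ uniformly, and Proposition \ref{severalestimates} gives $\|A_k(t)\|_{\H^{-1}_{per}}\leq C$.

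The decisive step, and where I expect the main obstacle, is the \emph{sublinear} $\calH$-estimate, which must close the bootstrap. Lemma \ref{H2n} gives $\|\Delta n[\varrho_k(t)]\|_{L^2}\leq C\|\varrho_k(t)\|_\calE^{1/2}\|\varrho_k(t)\|_\calH^{1/2}\leq C(1+\|\varrho_k(t)\|_\calH^{1/2})$. Since $\|A_k\|_{\H^{-1}_{per}}$ is already uniformly controlled, Theorem \ref{regA} yields $\|A_k(t)\|_{L^2}\leq C(1+\|\varrho_k(t)\|_\calH^{1/2})$, the exponential factor $\exp(C\mathfrak{H}_1^4)$ being a uniform constant. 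Feeding this into \fref{estimFA} of Proposition \ref{regmin} with $\alpha=1$ gives
\begin{equation*}
\|\varrho_e[\varrho_k(t)]\|_\calH\;\leq\;C\bigl(1+\|A_k(t)\|_{L^2}^2\bigr)\;\leq\;C\bigl(1+\|\varrho_k(t)\|_\calH\bigr),
\end{equation*}
again because $\exp(C\|A_k\|_{\H^{-1}_{per}}^4)$ is absorbed into the uniform constant. A final application of \fref{estianton} with $E=\calH$ and Gronwall then delivers \fref{estimkH}, and \fref{estimrhoe} follows from the sublinear estimate. The hard part is precisely to arrange the chain Lemma \ref{H2n} $\to$ Theorem \ref{regA} $\to$ Proposition \ref{regmin} so that the dependence on $\|\varrho_k\|_\calH$ remains at most linear: any superlinear dependence, or any residual $\calH$-dependence hidden inside the exponential $\exp(C\|A_k\|_{\H^{-1}_{per}}^4)$, would cause Gronwall to blow up and destroy the global-in-$T$ bound.
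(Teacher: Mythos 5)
Your proposal is correct and follows essentially the same path as the paper's proof: induction via the mild formulation, positivity plus the splitting $\varrho^0=f(H)+\delta\rho$ and $\calL(t)f(H)=f(H)$ to propagate the lower bound $n[\varrho_k(t)]\geq(1-\eps)\gamma e^{-T/\tau}$, an iterated bound in $\calJ_1$ and $\calE$ via Proposition \ref{severalestimates} and \eqref{estianton}, a uniform $\H^{-1}_{per}$ bound on $A_k$, and then the decisive sublinear chain $\text{Lemma \ref{H2n}}\to\text{Theorem \ref{regA}}\to\text{Proposition \ref{regmin}}$ giving $\Tr(H\varrho_e[\varrho_k]H)\leq C(1+\Tr(H\varrho_k H))$, which closes the Gronwall/iteration argument in $\calH$. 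Your identification of the exponential factor $\exp(C\|A_k\|^4_{\H^{-1}_{per}})$ as the obstruction to be neutralized by the prior $\calE$-bound is exactly the paper's key point; the only cosmetic differences are your explicit use of mass conservation $\Tr\varrho_k(t)=\Tr\varrho^0$ (where the paper iterates an inequality) and your remark on strong $\calH$-measurability via Pettis, which the paper leaves implicit.
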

\begin{proof} 
We proceed by induction and first decompose $\varrho_{k+1}$ in \fref{mild} into
\be \label{decomprhok}
\varrho_{k+1}=\varrho^1+\varrho_{k+1}^2,
\ee
with 
\begin{equation}
\varrho^1(t)= e^{-\frac{t}{\tau}}\calL(t) \varrho^0, \qquad \varrho_{k+1}^2(t)=\frac{1}{\tau}\int_0^t e^{-\frac{t-s}{\tau}}\calL(t-s) \varrho_e[\varrho_{k}(s)] ds.
\label{b3}
\end{equation}
\bs

\paragraph{\it Initial step $k=0$} Owing to Assumption \ref{ass1}, we have $\varrho_0 \in \calH$, so that $n[\varrho_0] \in \H^2$ according to Lemma \ref{H2n}. Moreover, $n_0:=n[\varrho_0] > 0$, so that the moment problem with constraint $n_0$ admits a unique solution $\varrho_e[\varrho_0]=\exp -(H+A_0)$ in $\calE_+$, with, according to Theorem \ref{regA}, $A_0 \in L^2$. In turn, Proposition \ref{regmin} yields $\varrho_e[\varrho_0] \in \calH$. We can thus apply Lemma \ref{anton} and obtain the existence and uniqueness of the solution $\varrho_1\in\calC^0([0,T],\calH)$ to \fref{mild} for $k=0$. Since the propagator $\calL$ preserves positivity,  $\varrho_{1}$ and $\varrho_{1}^2$ defined by \fref{b3} are positive operators. By linearity of the trace, this yields
$$
n[\varrho_1](x)=n[\varrho^{1}](x)+n[\varrho_{1}^2](x) \geq n[\varrho^{1}](x), \qquad \forall x \in [0,1].
$$
Furthermore, using \fref{estW11}, Assumption \ref{ass1}, and the fact that $\calL(t)$ is an isometry on $\calE$, we find
$$
\|n[\calL(t) \delta \rho]\|_{L^\infty} \leq \|n[\calL(t) \delta \rho]\|_{W^{1,1}} \leq 2 \| \calL(t) \delta \rho \|_{\calE}=2 \|\delta \rho \|_{\calE}<\gamma.
$$
Finally, remarking that $\calL(t)f(H)=e^{-iHt}f(H)e^{iHt}=f(H)$, Assumption \ref{ass1} yields
\bee
e^{\frac{t}{\tau}}n[\varrho^1(t)]&=& n[\calL(t)f(H)]+n[\calL(t) \delta \rho]= n[f(H)]+n[\calL(t) \delta \rho]\\
&\geq & (1-\eps) \gamma,
\eee
for some $\eps \in (0,1)$, and therefore, $\forall t \in [0,T]$,
$$n[\varrho^1(t)]\geq (1-\eps) \gamma e^{-\frac{T}{\tau}} :=\underline{n}.
$$
This completes the initial step.
\bs

\paragraph{\it From step $k$ to $k+1$} Let now $\varrho_k \in L^\infty((0,T),\calH)$ with $n[\varrho_{k}(t)] \geq \underline{n}>0$ for all $t\in[0,T]$. Then, Lemma \ref{H2n} and Proposition \ref{regmin} imply that $\varrho_e[\varrho_k]$ belongs to $ L^\infty((0,T),\calH)$. We can then apply Lemma $\ref{anton}$ and obtain the existence  of a unique $\varrho_{k+1}$ in $\calC^0([0,T],\calH)$ solution to \fref{mild}. We have immediately the lower bound
$$
n[\varrho_{k+1}(t)] \geq n[\varrho^1(t)]\geq \underline{n}, \qquad \forall k \in \NN, \qquad \forall t\in[0,T].
$$
\bs

\paragraph{\it Uniform bounds} We start with uniform estimates in $\calJ_1$ and in $\calE$, that will in turn be used to obtain a uniform bound in $\calH$. The estimate \eqref{estianton} yields
\be \label{estant}
\| \varrho_{k+1} \|_{E} \leq e^{-\frac{t}{\tau}} \| \varrho^0\|_{E} +\frac{1}{\tau}\int_0^t e^{-\frac{t-s}{\tau}} \|\varrho_e [\varrho_k(s)]\|_{E}\, ds,
\ee
with $E=\calJ_1$ or $\calE$. Since by construction
$$
\|\varrho_e [\varrho_k(s)]\|_{\calJ_1}= \| n[\varrho_k(s)]\|_{L^1}=\|\varrho_k(s)\|_{\calJ_1},
$$
we find, iterating \fref{estant}, for all $t\in [0,T]$,
$$
\| \varrho_{k+1}(t) \|_{\calJ_1} \leq e^{-\frac{t}{\tau}}\| \varrho^0\|_{\calJ_1}\left(\sum_{p=0}^{k+1} \frac{(t/\tau)^p}{p!} \right)\leq \| \varrho^0\|_{\calJ_1}.
$$
This yields in particular $ \| n[\varrho_k(t)]\|_{L^1} \leq \| n[\varrho^0]\|_{L^1}$ for all $t\in [0,T]$, and, from \fref{estimsolmom1} and \fref{sqrt},
$$
 \|\varrho_e [\varrho_k(t)]\|_{\calE} \leq C \left(1+ \beta(\|n[\varrho_k(t)]\|_{L^1})+\| \sqrt{n[\varrho_k(t)]} \|^2_{\H^1}\right) \leq C+C \| \varrho_k(t)\|_{\calE}.
$$
Hence, according to \fref{estant},
$$
\| \varrho_{k+1}(t) \|_{\calE} \leq e^{-\frac{t}{\tau}}  \| \varrho^0\|_{\calE} + C \int_0^t  e^{-\frac{t-s}{\tau}}\big(1+\|\varrho_k(s)\|_{\calE}\big)\, ds,
$$
which yields by iterating, for all $t\in[0,T]$,
\be \label{estimrhoE}
\| \varrho_{k+1}(t) \|_{\calE} \leq e^{-\frac{t}{\tau}}\big(1+\| \varrho^0\|_{\calE}\big) \left(\sum_{p=0}^{k+1} \frac{(Ct)^p}{p!} \right)\leq e^{C T} (1+\| \varrho^0\|_{\calE}).
\ee
This provides a uniform bound in $\calC^0([0,T],\calE)$ for $\varrho_k$. Estimates \fref{estimsolmom2} and \fref{sqrt} then imply that $A_{k}(t)$ is bounded in $\H^{-1}_{per}$ by a constant independent of $k$ and $t$. Furthermore, the estimate of Theorem \ref{regA}, together with \fref{estimrhoE} and Lemma \ref{H2n}, yields, for all $t\in [0,T]$:
$$
\| A_k(t)\|_{L^2} \leq C \big(1+ \| \Delta n[\varrho_{k}(t)]\|_{L^2}\big) \leq C \big(1+ \big(\Tr \big(H\varrho_{k}(t) H \big) \big)^{1/2}\big),
$$
so that, according to Proposition \ref{regmin} with $a=0$ and $\alpha=1$,
\be \label{HrhoH}
\Tr \big(H \varrho_e[\varrho_{k}(t)]H \big) \leq C\big(1+\| A_k(t)\|^2_{L^2}\big) \leq C\big(1+\Tr \big(H\varrho_{k}(t) H \big)\big).
\ee
The estimate \eqref{estianton} with $E=\calH$ finally yields
$$
\|\varrho_{k+1}(t)\|_{\calH} \leq e^{-\frac{t}{\tau}} \| \varrho^0\|_{\calH} +C\int_0^t e^{-\frac{t-s}{\tau}} \big(1+\|\varrho_{k}(s)\|_{\calH}\big)\, ds,
$$
which gives, as above for \eqref{estimrhoE},
$$
\| \varrho_{k+1}(t) \|_{\calH} \leq e^{C T} (1+\| \varrho^0\|_{\calH}).
$$
Estimate \fref{estimrhoe} follows directly from  \fref{HrhoH}. This ends the proof of the proposition.

\end{proof}

Since $L^\infty((0,T),\calJ_1)$ is the dual of the space $L^1((0,T),\calK)$, the uniform bounds of Proposition \ref{unifbound} allow us to pass to the weak-$*$ limit in \fref{mild}, but not to identify the limit of the term $\varrho_e[\varrho_k]$. We need a stronger topology in order to take advantage of the H\"older estimate of Corollary \ref{corcont}. A first ingredient in this direction is the next proposition.
\begin{proposition} \label{hold} There exists $M^{''}$ independent of $k$ such that$$
\|\varrho_k\|_{\calC^{0,1/2}([0,T],\calE)} \leq M^{''},
$$
where $\calC^{0,1/2}([0,T],\calE)$ is the space of operators in $\calE$ with  H\"older continuity of order $1/2$ in the time variable. 
\end{proposition}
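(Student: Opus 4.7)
The plan is to combine a uniform $\calJ_1$ bound on $\partial_t\varrho_k$ with the uniform bound in $\calC^0([0,T],\calH)$ from Proposition \ref{unifbound}, and then interpolate between $\calJ_1$ and $\calH$ to reach $\calE$ regularity of order $1/2$ in time.

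First I would use the recursion equation
$$\partial_t \varrho_{k+1} = L_0(\varrho_{k+1}) - \frac{1}{\tau}(\varrho_{k+1} - \varrho_e[\varrho_k])$$
to bound $\partial_t \varrho_{k+1}$ in $\calJ_1$ uniformly in $k$. For the commutator $L_0(\varrho_{k+1}) = -i(H\varrho_{k+1} - \varrho_{k+1}H)$, the polar decomposition argument from the end of the proof of Proposition \ref{regmin} applies verbatim: since $(H+\II)^{-1}H$ belongs to $\calL(L^2)$, one gets $\|H\varrho_{k+1}\|_{\calJ_1} \leq C\|\varrho_{k+1}\|_{\calH} \leq CM$ by Proposition \ref{unifbound}. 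The remaining two contributions are controlled by $\|\varrho_{k+1}\|_{\calJ_1} + \|\varrho_e[\varrho_k]\|_{\calJ_1} \leq 2\|\varrho^0\|_{\calJ_1}$, since both are positive operators with common total mass by construction. Integrating then yields the Lipschitz-in-time bound
$$\|\varrho_k(t) - \varrho_k(t')\|_{\calJ_1} \leq C|t-t'|.$$

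The heart of the proof is the interpolation inequality
$$\|X\|_{\calE} \leq \|X\|_{\calJ_1} + \|X\|_{\calJ_1}^{1/2}\|X\|_{\calH}^{1/2},$$
valid for any self-adjoint $X \in \calH$. To establish it, I would write $|X|^{1/2}$ through functional calculus and use cyclicity of the trace to identify
$$\Tr(\sqrt{H}|X|\sqrt{H}) = \||X|^{1/2}\sqrt{H}\|_{\calJ_2}^2, \qquad \Tr(H|X|H) = \||X|^{1/2}H\|_{\calJ_2}^2.$$
Applying Hadamard's three-lines theorem to the analytic family $z \mapsto |X|^{1/2}(H+\eps)^z$ on the strip $\Re z \in [0,1]$, whose boundary values on $\Re z = 0$ and $\Re z = 1$ are respectively $\||X|^{1/2}\|_{\calJ_2}$ and $\||X|^{1/2}(H+\eps)\|_{\calJ_2}$, and then letting $\eps \to 0$ to take care of the zero mode of $H$ on the periodic interval, produces at $z = 1/2$ the bound $\||X|^{1/2}\sqrt{H}\|_{\calJ_2}^2 \leq \||X|^{1/2}\|_{\calJ_2}\, \||X|^{1/2}H\|_{\calJ_2}$, which rewrites as the announced inequality.

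To conclude, I would apply this interpolation to $X = \varrho_k(t) - \varrho_k(t')$, which is self-adjoint and satisfies $\|X\|_{\calH} \leq 2M$ by Proposition \ref{unifbound}. Combined with the $\calJ_1$-Lipschitz estimate of the first step, this yields
$$\|\varrho_k(t) - \varrho_k(t')\|_{\calE} \leq C|t-t'| + C(2M)^{1/2}|t-t'|^{1/2} \leq M''|t-t'|^{1/2},$$
uniformly in $k$, which together with the sup bound $\sup_t\|\varrho_k(t)\|_{\calE}\leq \|\varrho_k\|_{\calC^0([0,T],\calH)} \leq M$ proves the proposition. I expect the main technical obstacle to be the interpolation step, both because the difference $X$ is self-adjoint but not positive (forcing the use of $|X|$ through polar decomposition) and because of the kernel of $H=-\Delta$ on the torus, which requires the harmless $H+\eps$ regularization above.
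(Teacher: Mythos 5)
Your proposal is correct but takes a genuinely different route from the paper in both key steps. For the $\calJ_1$-Lipschitz bound, the paper does not differentiate: it expands $\varrho_{k+1}(t)-\varrho_{k+1}(s)$ through the integral (mild) representation, decomposes it into three pieces $T_1,T_2,T_3$, and estimates each using the auxiliary Lemma \ref{hold0}, which asserts $\|\calL(t)\varrho-\varrho\|_{\calJ_1}\leq Ct\|\varrho\|_{\calH}$. Your approach of bounding $\partial_t\varrho_{k+1}$ directly in $\calJ_1$ via the ODE and the commutator estimate $\|H\varrho_{k+1}\|_{\calJ_1}\leq C\|\varrho_{k+1}\|_{\calH}$ is cleaner in spirit, and it is legitimate here because Lemma \ref{anton} guarantees the $\varrho_{k+1}$ are classical solutions with $\partial_t\varrho_{k+1}\in\calC^0([0,T],\calJ_1)$; it just transfers the work onto the classical-solution formalism rather than onto a group estimate. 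For the interpolation inequality, your Hadamard three-lines argument via $z\mapsto |X|^{1/2}(H+\eps)^z$ does work (the family is log-convex since $\||X|^{1/2}(H+\eps)^z\|_{\calJ_2}^2=\sum_p\sigma_p\|(H+\eps)^{\Re z}\psi_p\|^2$ and each $t\mapsto\|(H+\eps)^t\psi\|^2$ is log-convex by H\"older), but it is more machinery than needed: the paper's Lemma \ref{hold2} obtains the same inequality by a one-line Cauchy--Schwarz on the trace, $\Tr(|w|H)=\Tr(|w|^{1/2}\cdot|w|^{1/2}H)\leq\||w|^{1/2}\|_{\calJ_2}\,\||w|^{1/2}H\|_{\calJ_2}=(\Tr|w|)^{1/2}(\Tr H|w|H)^{1/2}$, after justifying the trace cyclicity by a resolvent regularization of $\sqrt H$. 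The conclusion and uniformity in $k$ are the same in both arguments.
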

\begin{proof} The proof essentially relies from an interpolation argument. We will use the following two lemmas, that are proved in Sections \ref{proofhold0} and \ref{proofhold2}.
\begin{lemma} \label{hold0} Let $\varrho \in \calH$, self-adjoint and nonnegative. Then,
$$
\|\calL(t)\varrho-\varrho\|_{\calJ_1} \leq C t \|\varrho\|_{\calH}\qquad \mbox{for all }t\geq 0.
$$
\end{lemma}

\begin{lemma} \label{hold2} Let $\varrho \in \calC^0([0,T],\calH)$. Then, we have the estimate, for all $(t,s)$ in $[0,T] \times [0,T]$:
$$\| \varrho(t)-\varrho(s)\|_{\calE} \leq \sqrt{2} \|\varrho\|^{1/2}_{\calC^0([0,T],\calH)}\| \varrho(t)-\varrho(s)\|^{1/2}_{\calJ_1}.$$
\end{lemma}

We have all the tools to proceed to the proof of Proposition \ref{hold}. According to Lemma \ref{hold2}, we only need to estimate $\varrho_k(t)-\varrho_k(u)$ in $\calJ_1$ since it is proved in Proposition \ref{unifbound} that $\varrho_k$ is uniformly bounded in $\calC^0([0,T],\calH)$.  We write then, for $(t,s) \in [0,T]\times[0,T]$:
 \bee
\varrho_{k+1}(t)-\varrho_{k+1}(s)&=&e^{-\frac{t}{\tau}}\calL(t) \varrho^0-e^{-\frac{s}{\tau}}\calL(s) \varrho^0\\
&&+\frac{1}{\tau}\int^t_s e^{-\frac{t-\sigma}{\tau}}\calL(t-\sigma) \varrho_e[\varrho_{k}(\sigma)] d\sigma\\
&&+\frac{1}{\tau}\int^s_0 e^{\frac{\sigma}{\tau}}(e^{-\frac{t}{\tau}}\calL(t-\sigma)-e^{-\frac{s}{\tau}}\calL(s-\sigma)) \varrho_e[\varrho_{k}(\sigma)] d\sigma\\
& :=&T_1+T_2+T_3.
\eee
We estimate $T_1$ using Lemma \ref{hold0} and the fact that $\calL$ is  an isometry on $\calJ_1$ as follows:
\bee
\|T_1\|_{\calJ_1} &\leq &|e^{-\frac{t}{\tau}}-e^{-\frac{s}{\tau}}| \| \calL(t) \varrho^0\|_{\calJ_1}+\| \calL(s)(\calL(t-s)-\II) \varrho^0\|_{\calJ_1}\\
&\leq & C |t-s| \|\varrho^0\|_{\calJ_1}+ C |t-s| \|\varrho^0\|_{\calH}\\
& \leq & C |t-s|.
\eee
Regarding $T_2$, we have, thanks to \fref{estimrhoe} of Proposition \ref{unifbound},
\bee
\|T_2\|_{\calJ_1} &\leq & C |t-s| \| \varrho_e [\varrho_k]\|_{L^\infty((0,T),\calJ_1)} \leq C|t-s|.
\eee
Finally, using again Lemma \ref{hold0} and \fref{estimrhoe}, 
\bee
\|T_3\|_{\calJ_1} &\leq &C |e^{-\frac{t}{\tau}}-e^{-\frac{s}{\tau}}| \| \varrho_e[\varrho_k]\|_{L^\infty((0,T),\calJ_1)}+\|(\calL(t-s)-\II) \varrho_e[\varrho_k]\|_{L^\infty((0,T),\calJ_1)}\\
&\leq & C |t-s| \| \varrho_e[\varrho_k]\|_{L^\infty((0,T),\calJ_1)}+ C |t-s| \|\varrho_e[\varrho_k]\|_{L^\infty((0,T),\calH)}\\
& \leq & C |t-s|.
\eee
As a consequence, according to Lemma \ref{hold2}, for all $(t,s) \in [0,T] \times [0,T]$, we have
$$
\|\varrho_k(t)-\varrho_k(s)\|_{\calE} \leq C |t-s|^{1/2},
$$ 
which concludes the proof of the proposition.
\end{proof}

\subsection*{Step 2: Compactness} The next step is to turn the uniform estimates of Proposition \ref{unifbound} and Proposition \ref{hold} into compactness results. This is the object of the next proposition.
\begin{proposition} \label{compact} Let $T>0$ and let $(\varrho_k)_{k \in \NN}$ be defined by \fref{mild}. Then, there exist  $\varrho \in \calC^0([0,T],\calE_+) \cap  L^\infty((0,T),\calH)$ and a subsequence, still denoted by $(\varrho_k)_{k\in \NN}$, such that
$$
\varrho_k \to \varrho \qquad \textrm{ in } \calC^0([0,T],\calJ_1)
$$
and 
$$
\sqrt{H}\varrho_k \sqrt{H} \to \sqrt{H}\varrho\sqrt{H} \qquad \textrm{ in } \calC^0([0,T],\calJ_1).
$$

\end{proposition}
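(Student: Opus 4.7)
The plan is to extract a subsequence converging in $\calC^0([0,T], \calE)$ via an Arzel\`a--Ascoli argument and then to recover the two stated $\calJ_1$-convergences from this. Note first that for any $\sigma \in \calE_+$ one has $\|\sigma\|_{\calE} = \Tr \sigma + \Tr(\sqrt{H}\sigma\sqrt{H}) = \|\sigma\|_{\calJ_1} + \|\sqrt{H}\sigma\sqrt{H}\|_{\calJ_1}$, so convergence in $\calC^0([0,T],\calE)$ of non-negative operators is exactly equivalent to the simultaneous convergence of $\varrho_k$ and $\sqrt{H}\varrho_k\sqrt{H}$ in $\calC^0([0,T],\calJ_1)$. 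It therefore suffices to produce a subsequence converging in $\calC^0([0,T], \calE)$.

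The two hypotheses required for Arzel\`a--Ascoli in $\calC^0([0,T], \calE)$ are uniform equicontinuity in $t$ and pointwise relative compactness in $\calE$. Equicontinuity is furnished directly by Proposition \ref{hold}, which provides H\"older-$1/2$ time regularity uniformly in $k$. Pointwise relative compactness would follow from the compact embedding $\calH \hookrightarrow \calE$, which I would establish by exploiting the fact that $H = -\Delta$ with periodic boundary conditions on $(0,1)$ has compact resolvent: its eigenvalues $\lambda_p = (2\pi p)^2$ cluster only at infinity, so $(H+I)^{-1}$ and $\sqrt{H}(H+I)^{-1}$ are compact operators on $L^2(0,1)$. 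Given a bounded sequence $(\sigma_k)$ in $\calH$, the operators $B_k := (H+I)\sigma_k(H+I)$ are bounded in $\calJ_1$; Banach--Alaoglu (using $\calJ_1 = \calK^*$) yields a subsequence with $B_k \rightharpoonup B$ in the weak-$*$ topology of $\calJ_1$. Writing $\sigma_k = (H+I)^{-1} B_k (H+I)^{-1}$ and $\sqrt{H}\sigma_k\sqrt{H} = [\sqrt{H}(H+I)^{-1}] B_k [(H+I)^{-1}\sqrt{H}]$, the standard fact that flanking a weak-$*$ convergent sequence in $\calJ_1$ on both sides by compact operators produces trace-norm convergence gives $\sigma_k \to (H+I)^{-1}B(H+I)^{-1}$ and a matching statement for $\sqrt{H}\sigma_k\sqrt{H}$, both in $\calJ_1$. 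Hence $\sigma_k$ converges in $\calE$.

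Combining this pointwise compactness with the equicontinuity of Proposition \ref{hold} via a diagonal extraction over a countable dense subset of $[0,T]$ and Arzel\`a--Ascoli produces a subsequence $\varrho_k \to \varrho$ in $\calC^0([0,T], \calE)$, which is exactly the pair of convergences claimed. To identify the limit: $\varrho(t) \in \calE_+$ since non-negativity is preserved under trace-norm limits, continuity $t \mapsto \varrho(t)$ in $\calJ_1$ and in $\sqrt{H}\cdot\sqrt{H}$-trace norm follows from uniform $\calE$-convergence of continuous maps, and $\varrho \in L^\infty((0,T),\calH)$ follows from the uniform $\calH$-bound of Proposition \ref{unifbound} combined with lower semicontinuity of the $\calH$-seminorm along the extracted subsequence (e.g.\ by testing $(H+I)\varrho_k(H+I)$ against compact operators and using Fatou).

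The principal obstacle is the rigorous justification of the ``flanking lemma'': that $K_1 B_k K_2 \to K_1 B K_2$ in trace norm whenever $B_k \rightharpoonup B$ weakly-$*$ in $\calJ_1$ and $K_1, K_2$ are compact on $L^2$. Naive weak-$*$ convergence in $\calJ_1$ does not in general imply trace-norm convergence, so one must approximate $K_1, K_2$ by finite-rank operators, control the remainders using the uniform $\calJ_1$-bound on $B_k$, and exploit the isometric identification $\calJ_1 = \calK^*$. This is the only non-formal step in the argument and is where the compact resolvent of $H$, rather than mere self-adjointness, enters in an essential way.
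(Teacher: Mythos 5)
Your proposal is correct and establishes the same conclusion, but it handles the pointwise relative compactness step differently from the paper. The paper invokes Lemma \ref{strongconv} (quoted from \cite{MP-JSP}, which asserts that bounded sequences in $\calE_+$ admit $\calJ_1$-convergent subsequences) twice: once applied directly to $\varrho_k(t)$, and once to $\sqrt{H}\varrho_k(t)\sqrt{H}$, which is itself a bounded sequence in $\calE_+$ thanks to the uniform $\calH$-bound. You instead prove, from scratch, that the embedding $\calH\hookrightarrow\calE$ is compact by writing $\sigma_k=(H+\II)^{-1}B_k(H+\II)^{-1}$ with $B_k=(H+\II)\sigma_k(H+\II)$ uniformly $\calJ_1$-bounded, extracting a weak-$*$ limit via $\calJ_1=\calK^*$, and upgrading to trace-norm convergence through the flanking lemma $K_1B_kK_2\to K_1BK_2$ in $\calJ_1$ with $K_i$ compact. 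Both routes hinge on the compact resolvent of $H$; yours is more self-contained and makes the mechanism explicit, while the paper's is terser because it reuses a prepackaged compactness lemma. One remark: the boundedness $B_k=(H+\II)\sigma_k(H+\II)\in\calJ_1$ requires $H\sigma_k$ and $\sigma_kH$ to be trace-class, which indeed follows from $\sigma_k\geq0$, $\sigma_k\in\calJ_1$ and $H\sigma_kH\in\calJ_1$ by writing $H\sigma_k=(H\sqrt{\sigma_k})\sqrt{\sigma_k}$ as a product of two Hilbert--Schmidt operators -- worth recording explicitly. The identification of the limit, the positivity, and the $L^\infty((0,T),\calH)$ bound by weak-$*$ lower semicontinuity are handled the same way in spirit as in the paper.
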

\begin{proof} We will show that the set 
\begin{align*}
&\calF=\{\varrho_k \in \calC^0([0,T],\calE_+), k \in \NN ,\; \textrm{such that } \|\varrho_k \|_{\calC^0([0,T],\calH)} \leq M,\\
 &\hspace{3cm} \textrm{and } \|\varrho_k\|_{\calC^{0,1/2}([0,T],\calE)} \leq M^{''} \} \subset \calC^0([0,T],\calE_+),
\end{align*}
 is equicontinuous, and pointwise relatively compact. The equicontinuity is a consequence of the uniform bound in $\calC^{0,1/2}([0,T],\calE)$. In order to prove that $\calF$ is pointwise relatively compact, we need to show that for each $t \in [0,T]$, the set $\calF_t=\{\varrho_k(t), \textrm{ such that }\varrho_k \in \calF,\, k\in \NN  \}$ is relatively compact in $\calE_+$, which is equivalent to the fact that any sequence in $\calF_t$ admits a subsequence converging in $\calE_+$. This will follow from Lemma \ref{strongconv} of the Appendix. 
 
 First of all, for each fixed $t\in[0,T]$, $\varrho_k(t)$ is bounded in $\calE_+$, so that according to Lemma \ref{strongconv}, there exists $\varrho(t) \in \calE_+ $, such that, up to an extraction of a subsequence,
$$
\varrho_k(t) \to \varrho(t) \qquad \textrm{strongly in } \calJ_1.
$$
In the same way, $\sqrt{H}\varrho_k(t)\sqrt{H}$ is bounded in $\calE_+$, so there exists $\widetilde{\varrho}(t) \in \calE_+ $, such that, up to an extraction of a subsequence,
$$
\sqrt{H}\varrho_k(t) \sqrt{H}\to \widetilde{\varrho}(t) \qquad \textrm{strongly in } \calJ_1.
$$
It is not difficult to identify $\widetilde{\varrho}(t)$ with $\sqrt{H}\varrho(t)\sqrt{H}$. This proves the pointwise relative compactness. The Arzel\`a-Ascoli theorem then yields that $\calF$ is relatively compact in $\calC^0([0,T],\calE_+)$. Therefore, since $(\varrho_k)_{k \in \NN}$ belongs to $\calF$ according to the uniform bounds of Proposition \ref{unifbound} and Proposition \ref{hold}, we can extract a subsequence such that $\varrho_k$ and $\sqrt{H}\varrho_k\sqrt{H}$ converge to $\varrho$ and $\sqrt{H}\varrho\sqrt{H}$ in $\calC^0([0,T],\calJ_1)$. The fact that $\varrho \in L^\infty((0,T),\calH)$ follows the uniform bound in $\calC^0([0,T], \calH)$ and from the compactness of $(\varrho_k)_{k \in \NN}$ for the weak-$*$ $L^\infty((0,T),\calH)$ topology.
\end{proof}

 A direct corollary of the previous proposition and of Lemma \ref{LT}, stated without proof, is the following result, that will help in the identification of the limit of $\varrho_e[\varrho_k]$. 
\begin{corollary} \label{convn} With the notation of Proposition \ref{compact}, $n[\varrho_k]$ converges strongly in $\calC^0([0,T],\H^1)$ to $n[\varrho]$.
\end{corollary}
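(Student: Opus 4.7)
The plan is to exploit the linearity of the density map $\varrho\mapsto n[\varrho]$ together with the Lieb--Thirring type inequalities of Lemma \ref{LT}. For each fixed $t\in[0,T]$, both $\varrho_k(t)$ and $\varrho(t)$ lie in $\calE_+$, hence the difference $\varrho_k(t)-\varrho(t)$ is a self-adjoint element of $\calE$, so Lemma \ref{LT} applies to it. Using $n[\varrho_k(t)]-n[\varrho(t)]=n[\varrho_k(t)-\varrho(t)]$ and the embedding $L^\infty(0,1)\hookrightarrow L^2(0,1)$, I would write
$$
\|n[\varrho_k(t)]-n[\varrho(t)]\|_{L^2}\le C\,\|\varrho_k(t)-\varrho(t)\|_{\calJ_2}^{1/4}\,\|\varrho_k(t)-\varrho(t)\|_{\calE}^{3/4},
$$
$$
\|\nabla n[\varrho_k(t)]-\nabla n[\varrho(t)]\|_{L^2}\le C\,\|\varrho_k(t)-\varrho(t)\|_{\calJ_1}^{1/4}\,\|\varrho_k(t)-\varrho(t)\|_{\calE}^{3/4}.
$$

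Proposition \ref{compact} provides the two ingredients needed to conclude. First, $\|\varrho_k(t)-\varrho(t)\|_{\calJ_1}\to 0$ uniformly in $t\in[0,T]$ (and $\|\cdot\|_{\calJ_2}\le\|\cdot\|_{\calJ_1}$), so the trace-class factor on the right-hand side vanishes uniformly in $t$. Second, $\varrho_k$ is uniformly bounded in $\calC^0([0,T],\calH)\subset\calC^0([0,T],\calE)$ by Proposition \ref{unifbound}, and $\varrho\in L^\infty((0,T),\calH)$ by Proposition \ref{compact}, so the $\calE$-factor on the right-hand side is bounded uniformly in $k$ and $t$. Combining these two facts yields $\sup_{t\in[0,T]}\|n[\varrho_k(t)]-n[\varrho(t)]\|_{\H^1}\to 0$, which is the claim. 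No step presents a real obstacle here; the corollary is essentially a packaging of Proposition \ref{compact} through the interpolation-type estimates of Lemma \ref{LT}.
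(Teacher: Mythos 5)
Your proof is correct and is exactly the argument the paper intends: the paper states the corollary as ``a direct corollary of the previous proposition and of Lemma \ref{LT}, stated without proof.'' Applying the Lieb--Thirring bounds of Lemma~\ref{LT} to the self-adjoint difference $\varrho_k(t)-\varrho(t)$, together with the uniform $\calJ_1$ convergence from Proposition~\ref{compact} and the uniform $\calE$ bounds, is precisely the intended route.
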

 
\subsection*{Step 3: Passing to the limit} We pass now to the limit in \fref{mild}. First of all, since $\varrho_e[\varrho_k]$ is uniformly bounded in $L^\infty((0,T),\calH)$ according to \fref{estimrhoe}, there exists $\widetilde{\varrho} \in L^\infty((0,T),\calH)$ such that $\varrho_e[\varrho_k]$ converges to $\widetilde{\varrho}$ in $L^\infty((0,T),\calH)$ weak-$*$. Together with Proposition \ref{compact}, this allows us to take the limit in \eqref{decomprhok}, \eqref{b3} and to obtain, for all $t\in[0,T]$,
\be \label{eqlim}
\varrho(t)=e^{-\frac{t}{\tau}}\calL(t) \varrho^0+\frac{1}{\tau}\int_0^t e^{-\frac{t-s}{\tau}}\calL(t-s) \widetilde{\varrho}(s) ds.
\ee
Note that actually $\varrho \in \calC^0([0,T],\calH)$ since $\widetilde{\varrho} \in L^\infty((0,T),\calH)$ in \fref{eqlim}. The next step is to identify $\widetilde{\varrho}$. We will apply Corollary \ref{corcont} for this identification. 

We first need to make sure that the moment problem with density constraint $n[\varrho(t)]$ can be solved. This follows from the fact that $\varrho(t) \in \calH$ for all $t \in[0,T]$, and from $n[\varrho(t)] \geq \underline{n}$, since $n[\varrho_k]$ converges to $n[\varrho]$ in $\calC^0([0,T],\H^1)$ (and therefore in $\calC^0([0,T] \times [0,1])$) according to Corollary \ref{convn}.  

Denote then by $\varrho_e[\varrho(t)]$ the solution to the moment problem with constraint $n[\varrho(t)]$ for every $t \in [0,T]$. Since $\varrho \in \calC^0([0,T],\calH)$, there exists $M'>0$ such that $\|\varrho(t)\|_{\calH} \leq M'$ for all $t\in[0,T]$. Together with \fref{estimkH}, this shows that the hypotheses of Corollary \ref{corcont} are satisfied. Therefore,
$$
\|\varrho_e[\varrho_k(t)]-\varrho_e[\varrho(t)]\|_{\calJ_2} \leq C \| \varrho_k(t) -\varrho(t)\|^{ 1/8}_{\calJ_2},
$$
which shows that $\widetilde{\varrho}=\varrho_e[\varrho]$ according to Proposition \ref{compact}.
\bs

\subsection*{ Step 4: Conclusion} We have proved that the equation
$$
\varrho(t)=e^{-\frac{t}{\tau}}\calL(t) \varrho^0+\frac{1}{\tau}\int_0^t e^{-\frac{t-s}{\tau}}\calL(t-s) \varrho_e[\varrho(s)] ds
$$
admits a solution $\varrho \in \calC^0([0,T],\calH)$.  Let us prove that $\varrho$ is a classical solution to \fref{liouville2}. We just give a sketch of the proof since the arguments are classical.  On the one hand, we first remark that since $\varrho^0 \in \calH$, $\hat \varrho(t):=e^{-\frac{t}{\tau}}\calL(t) \varrho^0 \in \calC^0([0,T],\calH)$ is continuously differentiable, with values in $\calJ_1$, and that $\hat \varrho$ satisfies
$$
\frac{\partial}{\partial t } \hat \varrho(t) =L_0(\hat \varrho(t)) -\frac{1}{\tau} \hat \varrho(t).
$$
On the other hand, according to \cite{pazy}, Chapter 4, Theorem 2.4, we need to show in addition that $\varrho_e[\varrho] \in \calC^0([0,T],\calJ_1)$ and that
$$
v(t):=\int_0^t e^{-\frac{t-s}{\tau}}\calL(t-s) \varrho_e[\varrho(s)] ds
$$
is continuously differentiable in $\calJ_1$. We start with the continuity of $\varrho_e[\varrho(t)]$. The continuity in $\calJ_2$ is a consequence of Corollary \ref{corcont} and of the continuity of $\varrho$ in $\calH$. 

The continuity in $\calJ_1$ follows from compactness arguments. Indeed, pick a time $t$ in $[0,T]$ and let $t_\eps$ such that $t_\eps \to t$. Let $n=n(t)=n[\varrho(t)]=n[\varrho_e[\varrho(t)]]$, as well as $n_\eps=n(t_\eps)$. We introduce similarly $\varrho_e^\eps=\varrho_e(t_\eps)$ and $\varrho_e=\varrho_e(t)$ to ease notations. We will prove that $\varrho_e^\eps$ converges to $\varrho_e$ in $\calJ_1$. First, since $\varrho \in \calC^0([0,T],\calH)$, the sequence $\varrho(t_\eps)$ is uniformly bounded in $\calH$. Consequently, according to Lemma \ref{H2n}, the sequence $n_\eps$ is bounded in $\H^2$. Therefore, following Theorem \ref{regA}, the sequence $A_\eps$ is uniformly bounded in $L^2$, and finally, according to Proposition \ref{regmin}, the sequence $\varrho_e^\eps$ is bounded in $\calH$ .  Lemma \ref{strongconv} then shows that there exists $\widetilde \varrho_e \in \calJ_1$ such that $\varrho_e^\eps \to \widetilde \varrho_e$ strongly in $\calJ_1$. We conclude by identifying $\widetilde \varrho_e$ with $\varrho_e(t)$ thanks to the continuity in $\calJ_2$. Hence, we have just proved that $\varrho_e[\varrho] \in \calC^0([0,T],\calJ_1)$.

We turn now to the differentiability of $v$. We write first
\begin{align*}
\frac{v(t+h)-v(t)}{h}&=\frac{1}{h}\int^{t+h}_t e^{-\frac{t+h-s}{\tau}}\calL(t+h-s) \varrho_e[\varrho(s)] ds\\
&\qquad+\frac{1}{h}\int^t_0 e^{\frac{s}{\tau}}(e^{-\frac{t+h}{\tau}}\calL(t+h-s)-e^{-\frac{t}{\tau}}\calL(t-s)) \varrho_e[\varrho(s)] ds\\
&=T_1(h)+T_2(h).
\end{align*}
{}From the continuity of $\varrho_e[\varrho]$ in $\calJ_1$, from the fact that $\calL$ is a $\calC^0$ unitary group on $\calJ_1$, one deduces that the limit in $\calJ_1$ of $T_1(h)$ as $h\to 0$ is equal to $\varrho_e[\varrho(t)]$. For the term $T_2$, we use the fact that $\varrho_e[\varrho](s) \in \calH$ for all $s \in [0,T]$.  This yields the differentiability of $v$, and that, for all $t \in [0,T]$,
$$
\frac{\partial}{\partial t} v(t)= L_0(v(t))-\frac{1}{\tau} v(t) +\varrho_e[\varrho(t)].
$$
It only remains to establish the continuity of $\partial_t v$ in $\calJ_1$, and therefore that of $L_0(v(t))$. Since $\frac{1}{\tau} v(t)=\varrho(t)-\hat \varrho(t)$ by definition, and both $\varrho$ and $\hat \varrho$ are in $\calC^0([0,T],\calH)$, it follows that $v \in \calC^0([0,T],\calH)$, which is enough to conclude that $L_0(v(t)) \in \calC^0([0,T],\calJ_1)$. This concludes the proof of Theorem \ref{thliou}.

\section{Proof of Theorem \ref{thliou2}} \label{proofthliou2}

We will need the following lemma, which is a direct adaptation of the results of \cite{DFM}, Section 4, and provides us with a solution to the moment problem with a \textit{global} density constraint (see as well \cite{Dolbeault-Loss}):
\begin{lemma} \label{momN} (The global moment problem) For some $n_0\in \RR_+^*$, let $\calE_0=\{ \varrho \in \calE_+, \; \Tr \varrho=n_0\}$. Then, the free energy  $F$ defined by \eqref{F} admits  a unique minimizer $\varrho_g$ in $\calE_0$, which admits the expression
$$
\varrho_g=e^{-(H+A_0)}, 
$$ 
where the chemical potential $A_0$ is a constant and verifies $n_0= \Tr(e^{-H}) e^{-A_0}$.
\end{lemma}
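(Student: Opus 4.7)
The global constraint $\Tr \varrho = n_0$ is a single scalar condition, in stark contrast with the local constraint $n[\varrho] = n$ of Theorem \ref{theo1}, and this simplifies matters considerably. The strategy is the direct method of the calculus of variations for existence, strict convexity of $F$ for uniqueness, and a scalar Lagrange multiplier argument for the characterization. The proof is a streamlined version of the one of Theorem \ref{theo1} given in \cite{MP-JSP} (and essentially already carried out in \cite{DFM}); hence we only sketch the main steps.

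For existence, I would start with a minimizing sequence $(\varrho_k)_{k\in\NN} \subset \calE_0$. Writing $\beta(x) = x\log x - x \geq -1$ on $[0,\infty)$ gives $\Tr(\beta(\varrho_k)) \geq -\Tr \varrho_k - $ (const. from the tail) via standard Lieb-Thirring type estimates, so the bound on $F(\varrho_k)$ together with $\Tr \varrho_k = n_0$ yields a uniform bound on $\|\varrho_k\|_\calE$. Applying Lemma \ref{strongconv} (the compactness result used in Proposition \ref{compact}) delivers strong convergence in $\calJ_1$ of a subsequence to some $\varrho_g \in \calE_+$, which in particular preserves $\Tr \varrho_g = n_0$, so $\varrho_g \in \calE_0$. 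Lower semi-continuity of the kinetic part $\Tr(\sqrt{H}\varrho\sqrt{H})$ and of the entropy $\Tr(\beta(\varrho))$ under strong $\calJ_1$ convergence (with bounded $\calE$-norm) then shows that $\varrho_g$ is a minimizer. Uniqueness follows from the strict convexity of $\beta$ and the linearity of the constraint and of the kinetic term.

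For the characterization, I would verify first that $\varrho_g$ has full rank by exactly the argument of \cite{MP-JSP}: replacing $\varrho_g$ by $(1-t)\varrho_g + t \varrho_*$ for a carefully chosen $\varrho_* \in \calE_0$ of full rank and exploiting the fact that $\beta'(0^+) = -\infty$. This enables one to perform admissible variations $\varrho_g + \epsilon \delta\varrho$ with $\delta \varrho = \delta\varrho^*$ and $\Tr \delta\varrho = 0$, and to differentiate $F$ to obtain the Euler-Lagrange condition
\[
\Tr\bigl((\log \varrho_g + H)\,\delta\varrho\bigr) = 0
\]
for every such $\delta\varrho$. Since the orthogonal (in $\calJ_2$) of the hyperplane $\{\delta\varrho = \delta\varrho^*,\ \Tr \delta\varrho = 0\}$ consists of scalar multiples of the identity, there exists a constant $A_0 \in \RR$ with $\log \varrho_g + H = -A_0 \,\II$, i.e., $\varrho_g = e^{-(H+A_0)}$.

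Finally, since $A_0$ is a scalar, $\varrho_g = e^{-A_0}\,e^{-H}$, and the constraint $\Tr \varrho_g = n_0$ reads $n_0 = \Tr(e^{-H})\,e^{-A_0}$, which uniquely determines $A_0$. The only point to check is that $e^{-H} \in \calJ_1$, but this is immediate since $H = -\Delta$ on $[0,1]$ with periodic boundary conditions has eigenvalues $\lambda_p = (2\pi p)^2$, giving $\Tr(e^{-H}) = \sum_{p \in \ZZ} e^{-(2\pi p)^2} < \infty$. The main technical point throughout is the full-rank property needed to justify the Euler-Lagrange derivation, exactly as in Theorem \ref{theo1}; all other steps are routine once existence and compactness are in hand.
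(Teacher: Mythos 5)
The paper does not give a proof of Lemma \ref{momN}; it is stated as ``a direct adaptation of the results of \cite{DFM}, Section 4'' with an additional pointer to \cite{Dolbeault-Loss}. There is therefore no in-paper proof to compare against, and your sketch has to be assessed on its own merits.

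Your overall plan --- direct method via Lemma \ref{strongconv}, uniqueness by strict convexity of $\beta$, full rank by the $(1-t)\varrho_g + t\varrho_*$ perturbation, then a scalar Lagrange multiplier --- is the natural argument and very plausibly what \cite{DFM} does; it is also the global-constraint analogue of the proof of Theorem \ref{theo1} in \cite{MP-JSP}. A few points, however, are imprecise as written. First, the inequality $\beta(x)\geq -1$ on $[0,\infty)$ is true but by itself gives no lower bound on $\Tr\beta(\varrho_k)$, since the trace is an infinite sum with arbitrarily many eigenvalues near $1$. The coercivity estimate you actually need is exactly \eqref{souslin} of Lemma \ref{propentropie}, namely $\Tr(\varrho\log\varrho - \varrho)\geq -C\bigl(\Tr\sqrt{H}\varrho\sqrt{H}\bigr)^{1/2}$, which, combined with the constraint $\Tr\varrho_k=n_0$, bounds $\|\varrho_k\|_\calE$ from above along a minimizing sequence. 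Second, the phrase ``lower semi-continuity of the entropy'' should be replaced by the stronger fact in Lemma \ref{propentropie}(ii): under a uniform $\calE_+$ bound and $\calJ_1$ convergence, the entropy term actually converges. Third, the identification of the multiplier by $\calJ_2$-orthogonality is formally wrong because $\II\notin\calJ_2(L^2)$ in infinite dimensions; the correct route is to test the Euler--Lagrange identity $\Tr\bigl((\log\varrho_g+H)\delta\varrho\bigr)=0$ against self-adjoint trace-free rank-two perturbations of the form $|\phi_1\rangle\langle\phi_1|-|\phi_2\rangle\langle\phi_2|$, $|\phi_1\rangle\langle\phi_2|+|\phi_2\rangle\langle\phi_1|$, and $i\bigl(|\phi_1\rangle\langle\phi_2|-|\phi_2\rangle\langle\phi_1|\bigr)$ with $\phi_1,\phi_2$ in the appropriate form domain, concluding that $\log\varrho_g+H$ is a real multiple of the identity; the full-rank property (and the bound $\beta'(0^+)=-\infty$) is needed here exactly as you indicate, to keep the perturbed operator in $\calE_+$ for small $\epsilon$. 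With these repairs the argument is complete; the remaining steps (existence/positivity of $\Tr e^{-H}$ from $\lambda_p=(2\pi p)^2$ and solving for $A_0$) are immediate and you state them correctly.
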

The proof of Theorem \ref{thliou2} involves the relative entropy between two density operators $u$ and $v$ defined by
$$
S(u,v)=\Tr \big((\log u-\log v) u \big).
$$
Note that for all $(u,v)$, $S(u,v) \in [0, \infty]$, and by Theorem \ref{klein}, for two density operators $u$ and $v$ with the same trace, we have $S(u,v)=0$ if and only if $u=v$. We will use the notation $\varrho_e(\tau):=\varrho_e[\varrho(\tau)]$. The key ingredient of the proof is the following lemma (proved in Section \ref{secother}), which shows that the free energy is not increasing.
\begin{lemma} (Entropy relation) \label{eqfreeE}For $t,s \geq 0$, for $\varrho$ a classical solution to \fref{liouville2}, and $\varrho_e$ the solution to the moment problem with density constraint $n[\varrho(t)]$, let 
$$
\calS(t,s)=\frac{1}{\tau}\int_s^t \left(S(\varrho(\sigma),\varrho_e(\sigma))+S(\varrho_e(\sigma),\varrho(\sigma))\right) d\sigma.
$$
Then, the free energy verifies the relation
\be \label{eqentropie}
F(\varrho(t))+\calS(t,s)=F(\varrho(s)), \qquad \forall t,s \geq 0.
\ee
\end{lemma}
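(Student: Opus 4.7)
The identity will follow by differentiating $F(\varrho(t))$ along the flow and integrating. Formally, since $\beta'(x)=\log x$, the chain rule for traces gives
\[
\frac{d}{dt}F(\varrho(t))=\Tr\bigl((\log\varrho+H)\,\partial_t\varrho\bigr),
\]
and substituting the Liouville--BGK equation $\partial_t\varrho=L_0(\varrho)-\tau^{-1}(\varrho-\varrho_e)$ splits the right-hand side into a conservative piece and a dissipative piece. The conservative piece $\Tr((\log\varrho+H)L_0(\varrho))$ vanishes: the term $\Tr(H\,L_0(\varrho))=-i\Tr(H^2\varrho-H\varrho H)=0$ by cyclicity, while $\Tr(\log\varrho\,L_0(\varrho))=0$ follows from the fact that $\log\varrho$ commutes with $\varrho$, so that $\Tr(\log\varrho\,H\varrho)=\Tr(\varrho\log\varrho\,H)=\Tr(\log\varrho\,\varrho H)$. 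Hence
\[
\frac{d}{dt}F(\varrho(t))=-\frac{1}{\tau}\Tr\bigl((\log\varrho+H)(\varrho-\varrho_e)\bigr).
\]

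The next step is to recognize the right-hand side as a sum of relative entropies. Using the characterization of the local equilibrium as a quantum Maxwellian $\varrho_e=\exp(-(H+A_e))$ given in Theorem \ref{theo1}, we have $H=-\log\varrho_e-A_e$ (in the sense that their quadratic forms coincide), so
\[
\Tr\bigl((\log\varrho+H)(\varrho-\varrho_e)\bigr)=\Tr\bigl((\log\varrho-\log\varrho_e)(\varrho-\varrho_e)\bigr)-\Tr\bigl(A_e(\varrho-\varrho_e)\bigr).
\]
The second trace equals $\int_0^1 A_e\,(n[\varrho]-n[\varrho_e])\,dx$, which vanishes by the density constraint $n[\varrho_e]=n[\varrho]$, while the first trace expands precisely to $S(\varrho,\varrho_e)+S(\varrho_e,\varrho)$. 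Integrating between $s$ and $t$ then yields \eqref{eqentropie}.

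The main technical obstacle is to rigorously justify the chain-rule computation above, since $\log\varrho$ and $H$ are unbounded and a priori only make sense as closed operators, while the products $H\varrho$, $\varrho H$, $A_e\varrho$ must be trace class for the individual terms to be well defined. The ingredients needed are already in place: the classical solution $\varrho\in \calC^0([0,T],\calH)$ guarantees $H\varrho,\varrho H\in\calJ_1$; Theorem \ref{regA} gives $A_e\in L^2$, and Proposition \ref{regmin} yields $H\varrho_e,\varrho_e H\in\calJ_1$ and also $(H+\II)\varrho_e(H+\II)\in\calJ_1$, so that $A_e\varrho_e$ and $A_e\varrho$ are trace class (by the resolvent argument already used in the proof of Proposition \ref{contsol} to bound $\Tr|A_j\varrho_i|$). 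To bypass the issue that $\log\varrho$ need not be bounded, I would regularize the eigenvalues of $\varrho$ by spectral truncation, i.e.\ replace $\log\varrho$ by $\log(\varrho+\eta)$ or by cutting off the spectrum of $\varrho$ at levels $\varepsilon\le\rho_k\le\varepsilon^{-1}$, carry out the differentiation and the cyclic manipulations on the regularized object, and pass to the limit using the $\calJ_1$-continuity of $t\mapsto\varrho(t)$, $t\mapsto\varrho_e(t)$ (established in the proof of Theorem \ref{thliou}) together with the summability of $\rho_k\log\rho_k$ provided by Proposition \ref{severalestimates}.

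Once the time derivative identity is established pointwise in $t$, the fact that both sides are continuous in $t$ (by the $\calJ_1$-continuity of $\varrho,\varrho_e$ and dominated convergence applied to the spectral representations) allows integration over $[s,t]$, which produces the claimed equality $F(\varrho(t))+\calS(t,s)=F(\varrho(s))$. In particular $\calS(t,s)\ge 0$ and $F$ is non-increasing, which is the key input for the long-time convergence argument of Theorem \ref{thliou2}.
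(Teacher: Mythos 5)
Your plan follows the paper's proof almost step for step: differentiate $F(\varrho(t))$ along the flow, kill the commutator contribution by cyclicity, rewrite $H=-\log\varrho_e-A_e$ via the quantum Maxwellian form, cancel the $A_e$ term using the density constraint, and recognize the remainder as $S(\varrho,\varrho_e)+S(\varrho_e,\varrho)$. The formal algebra and the identification of the relative-entropy sum are correct, and the trace-class ingredients you cite ($H\varrho,\varrho H,H\varrho_e\in\calJ_1$, $A_e\in L^2$ uniformly, $(H+\II)\varrho_e(H+\II)\in\calJ_1$, the resolvent argument for $A_e\varrho\in\calJ_1$) are exactly the ones the paper uses.

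Where your plan is short is the regularization. You propose to regularize only $\log\varrho$, but the paper also replaces the kinetic term $\Tr(\sqrt{H}\varrho\sqrt{H})$ by $\Tr\big(\sqrt{H}(\II+\eta\sqrt{H})^{-1}\varrho(\II+\eta\sqrt{H})^{-1}\sqrt{H}\big)$ and further introduces $H_\eps=H(\II+\eps H)^{-1}$ when invoking cyclicity. Both steps are necessary, not cosmetic. Your formal identity $\Tr\big(H\,L_0(\varrho)\big)=-i\,\Tr(H^2\varrho-H\varrho H)=0$ by cyclicity presupposes $H^2\varrho\in\calJ_1$, which does not follow from $\varrho\in\calH$ (that space only gives $H\varrho H, H\varrho,\varrho H\in\calJ_1$). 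Similarly, even after replacing $\log\varrho$ by the bounded $\log(\varrho+\eta)$, the cyclic manipulation of $\Tr\big(\log(\varrho+\eta)[H,\varrho]\big)$ still mixes a bounded operator with the unbounded $H$, and the paper sidesteps this by first passing to $H_\eps$, applying cyclicity for bounded operators, and then letting $\eps\to 0$ using $\|\II-(\II+\eps H)^{-1}\|_{\calL(L^2)}\to 0$ together with $H\varrho\in\calJ_1$. Finally, passing to the limit $\eta\to 0$ in $\Tr\big(\log(\varrho+\eta)\varrho_e\big)$ is not covered by your dominated-convergence argument alone: the paper has to split the spectrum of $\varrho(t)$ into eigenvalues above and below a threshold, use uniform boundedness on the upper part, and monotone convergence on the lower part, because $\log\varrho$ is not in any reasonable sense trace class against $\varrho_e$ without that care. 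These are all fixable within your outline, but as written the plan omits the two additional regularizations and the monotone-convergence step, and the cyclicity claims would fail without them.
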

The proof of Theorem \ref{thliou2} then goes formally as follows. Let $(t_k)_{k \in \NN}$ be a sequence such that $t_k \to \infty$ and define $\varrho_k(t):=\varrho(t+t_k)$ for $t\in [0,T]$. Here $T>0$ is a fixed constant. In a first step, we show that $\varrho_k(t)$ converges to a \textit{local} quantum Maxwellian (i.e. a solution to the moment problem with a local density constraint); this is a consequence of entropy dissipation via \fref{eqentropie}, and of continuity results for the moment problem that enable us to identify the limit of $\varrho_e[\varrho_k(t)]$. In a second step, we use the free Liouville equation to show that the local Maxwellian is actually a global Maxwellian: since $\varrho_k$ converges to a local Maxwellian $\varrho_\infty(t):=\exp(-(H+A_\infty(t)))$, the Liouville-BGK equation becomes a free Liouville equation of the form
$$
i\partial_t \varrho_\infty=[H,\varrho_\infty].
$$
Then, writing 
$$
[H,\varrho_\infty]=[H+A_\infty(t),\varrho_\infty]-[A_\infty(t),\varrho_\infty]=-[A_\infty(t),\varrho_\infty],
$$
where the first commutator vanishes since $\varrho_\infty(t)=\exp(-(H+A_\infty(t)))$, we obtain that $n[\varrho_\infty(t)]$ is constant in time since the local trace of the second commutator is zero. Therefore, since $\varrho_\infty(t)$ is the solution to the moment problem with constraint $n[\varrho_\infty(t)]$, $\varrho_\infty(t)$ is itself independent of time. As a consequence, $[H,\varrho_\infty]=0$, so that $H$ and $\varrho_\infty$ can be diagonalized simultaneously. In particular, since the eigenspace of $H$ corresponding to the zero eigenvalue is of dimension one, the corresponding eigenvector, namely the constant one, is also an eigenvector of $H+A_\infty(t)$, say associated with the eigenvalue $\lambda_{q_0}$. Denoting by $\phi_1=1$ this eigenvector, we have as a consequence,
$$
(H+A_\infty(t)) \phi_1 = \lambda_{q_0} \phi_1= A_\infty(t) \phi_1,
$$ 
which shows that for all $t\geq 0$, $A_\infty(t,x)$ is a constant (note that we use here the particular form of the free Hamiltonian $H$; if $H$ included a potential term, we would resort to the Krein-Rutman theorem to show that $A_\infty(t,x)$ is a constant). Hence, for all $t\geq 0$, $\varrho_\infty(t)$ reads $\varrho_\infty(t)=e^{-(H+\lambda_{q_0})}$, verifies moreover $\Tr \varrho^0=\Tr \varrho_\infty(t)= \Tr(e^{-H}) e^{-\lambda_{q_0}}$, which, according to Lemma \ref{momN}, shows that  $\varrho_\infty(t)=\varrho_g$, with $n_0=\Tr \varrho^0$. 

The rigorous proof starts with some estimates.

\subsection{Estimates} Since $\calS(t,s) \geq 0$ for all $t,s \geq 0$, we have from \fref{eqentropie},
\be \label{decF} 
F(\varrho(t))\leq F(\varrho^0), \qquad \forall t \geq 0.
\ee
Using the fact that the Liouville equation preserves the trace, which yields $\Tr \varrho(t)= \Tr \varrho(0)$, and using \fref{decF} along with \fref{souslin}, we can conclude that 
\be \label{rhon}
\sup_{t \in [0,\infty)} \|\varrho_k(t)\|_{\calE} \leq C, \qquad \forall k \in \NN.
\ee
In turn, we deduce from \fref{sqrt} and \fref{estimsolmom1} that
\be \label{rhoen}
\sup_{t \in [0,\infty)} \|\varrho_e[\varrho_k(t)]\|_{\calE} \leq C, \qquad \forall k \in \NN.
\ee
We need more estimates in order to guarantee sufficient compactness. We adapt Proposition \ref{hold} and Lemma \ref{hold0} to the case where $\varrho(t)$ is only in $\calE$ and not in $\calH$. The estimate of Lemma \ref{hold0} then becomes, for $\varrho \in \calE$, nonnegative and self-adjoint,  
\be \label{hold00}
\|\calL(t)\varrho-\varrho\|_{\calJ_1} \leq C \sqrt{t} \|\varrho\|_{\calE}\qquad \mbox{for all }t\geq 0.
\ee
A proof of the above estimate is given in Section \ref{proofhold00} for completeness. Moreover, following step by step the proof of Proposition \ref{hold} and using \fref{hold00},  we obtain, for all $s,t$ in $[0,T]$,
\be \label{hold3}
\|\varrho_k(t)-\varrho_k(s)\|_{\calJ_1} \leq C |t-s|^{1/2}  \left( \|\varrho_k(0)\|_{\calE}+ \|\varrho_e[\varrho_k]\|_{L^\infty((0,\infty),\calE)} \right) \leq C |t-s|^{1/2}.
\ee
In the latter estimate, we used \eqref{rhon} and \fref{rhoen} and the constant $C$ does not depend on $k$ (but depends on $T$). Estimates \fref{rhon} and \fref{hold3} are then enough to conclude that, in the same way as in Proposition \ref{compact}, there exists $\varrho_\infty \in \calC^0([0,T],\calJ_1) \cap L^\infty((0,T),\calE)$, nonnegative, and a subsequence, still denoted by $(\varrho_k)_k$ such that, as $k \to \infty$,
\be \label{convinfty}
\varrho_k \to \varrho_\infty \qquad \textrm{strongly in } \calC^0([0,T],\calJ_1).
\ee

\subsection{Convergence to a local quantum Maxwellian} We identify now the limit $\varrho_\infty$. For this, we need to characterize the solution to the moment problem with constraint $n[\varrho_\infty(t)]$.  This requires $n[\varrho_\infty(t)]$ to be uniformly bounded from below, and will allow us to exploit continuity results for the minimizer. Note that the lower bound we obtained in the existence theory is useless here since it vanishes in the limit $t_k \to \infty$.

\subsection*{Step 1: bound from below} We obtain a new bound as follows: denote by $(\mu_p,e_p)_{p \in \NN^*}$ the spectral elements of the Hamiltonian $H+A_0$, with domain $D(H)$ explicited in \fref{domainH}, that defines the Gibbs state $\varrho_g$ of Lemma \ref{momN}. The eigenfunctions of $H+A_0$ are the Fourier basis $e^{2\pi i px}$, $p\in \ZZ$. Hence, the density of the Gibbs state is constant since
\be \label{lowerg}
n[\varrho_g](x)\equiv n_g:=e^{-A_0}\sum_{p\in \ZZ} e^{-4\pi^2p^2}>0.
\ee
We show now that if the relative entropy between the initial condition $\varrho^0$ and $\varrho_g$ is sufficiently small, then the difference $n[\varrho_\infty(t)]-n[\varrho_g]$ remains small as well in $L^\infty$. This will enable us to exploit \fref{lowerg} in order to obtain the bound from below.
Knowing that $\varrho_g=e^{-(H+A_0)}$, $A_0$ being constant, and using that $\varrho_k(t) \in \calH$ for all $t \geq 0$, we write
\bee
F(\varrho_k(t))&=& \Tr \big (H \varrho_k(t)\big) + \Tr \big( (\log \varrho_k(t)-\II) \varrho_k(t) \big)\\
&=& -\Tr \big (\log (\varrho_g) \; \varrho_k(t)\big)- A_0 \Tr \big( \varrho_k (t)\big)  + \Tr \big( (\log \varrho_k(t)-\II) \varrho_k(t) \big)\\
&=&-(A_0+1) \Tr \big (\varrho_k(t)\big)+\Tr \big( (\log \varrho_k(t)-\log \varrho_g ) \varrho_k(t) \big)\\
&=&F(\varrho_g)+S(\varrho_k(t),\varrho_g),
\eee 
where we used the fact that $\Tr \big (\varrho_k(t)\big)=\Tr \big (\varrho^0\big)=\Tr \big (\varrho_g\big)$ in the last line. We then deduce from \fref{decF} that
$$
S(\varrho_k(t),\varrho_g) \leq S(\varrho^0,\varrho_g),  \qquad \forall t \geq 0, \qquad \forall k \in \NN.
$$
The Klein inequality of Theorem \ref{klein} then yields
$$
\| \varrho_k(t)-\varrho_g\|_{\calJ_2}^2 \leq C S(\varrho^0,\varrho_g), \qquad \forall t \geq 0, \qquad \forall k \in \NN,
$$
which, with \fref{convinfty} and the fact that $\calJ_1 \subset \calJ_2$, leads to
\be \label{relent}
\| \varrho_\infty(t)-\varrho_g\|_{\calJ_2}^2 \leq C S(\varrho^0,\varrho_g), \qquad \forall t \in [0,T].
\ee
Finally, estimate \fref{ninfty} gives
$$
\| n[\varrho_\infty(t)] - n[\varrho_g] \|_{L^\infty} \leq C \| \varrho_\infty(t)-\varrho_g\|_{\calJ_2}^{1/4} \left( \|\varrho_\infty(t)\|_{\calE}+\| \varrho_g\|_\calE\right)^{3/4}.
$$
Combining the latter estimate with \fref{relent} and the fact that $\varrho_\infty \in L^\infty((0,T),\calE)$, we obtain
$$
\| n[\varrho_\infty] - n[\varrho_g] \|_{L^\infty((0,T) \times (0,1))} \leq C_0 (S(\varrho^0,\varrho_g))^{1/8} =C_0 (F(\varrho^0)-F(\varrho_g))^{1/8}.
$$
Setting then $F(\varrho^0)-F(\varrho_g)\leq (\eps_0)^8$, with $C_0 \eps_0<n_g$, we can conclude from \fref{lowerg} that $n[\varrho_\infty(t)]$ is bounded from below a.e. in $(0,T) \times (0,1)$. 

\subsection*{Step 2: solution to the limiting moment problem} Since $\varrho_\infty \in L^\infty((0,T),\calE)$, we have from \fref{ninfty}-\fref{gradnl2} that $n[\varrho_\infty(t)] \in \H^1$, $t$ a.e., and we can therefore solve the moment problem for the constraint $n[\varrho_\infty(t)]$. Note that since we only know that $\varrho_\infty(t)$ is in $\calE$ almost everywhere in $[0,T]$ and not for all $t\in[0,T]$, the solution to the moment problem with constraint $n[\varrho_\infty(t)]$ is only defined almost everywhere in $[0,T]$.  We denote the corresponding solution by $\varrho_e[\varrho_\infty(t)]$. Since we just showed in the previous step that $n[\varrho_\infty(t)]$ is uniformly bounded from below, we can characterize $\varrho_e[\varrho_\infty(t)]$ as a quantum Maxwellian with chemical potential $A_\infty(t)$. Following \fref{estimsolmom1}-\fref{estimsolmom2}, we have moreover the estimates
\be \label{rhoen2}
\|\varrho_e[\varrho_\infty]\|_{L^\infty((0,T),\calE)} \leq C, \qquad \|A_\infty\|_{L^\infty((0,T),\H^{-1}_{per})} \leq C.
\ee
Note that the same analysis yields that $n[\varrho_k(t)]$ is uniformly bounded from below at all times, which, using \fref{estimsolmom2} and \fref{rhon}, leads to 
\be \label{rhoAn2}
\sup_{t \in [0,\infty)} \|A_k(t)\|_{\H^{-1}_{per}} \leq C, \qquad \forall k\in \NN.
\ee

\subsection*{Step 3: continuity of quantum Maxwellians for $\H^{-1}_{per}$ potentials} The third step in the identification of $\varrho_\infty$ is to use once more the entropy relation \fref{eqentropie}, now for the integral term involving the relative entropies, to arrive at 
\begin{align*}
\frac{1}{\tau}\int_0^{t+t_k} S(\varrho(s),\varrho_e(s)) ds &\leq F(\varrho^0)-F(\varrho(t+t_k))\\
&\leq F(\varrho^0)+C\left(\Tr(\sqrt{H}\varrho_k(t)\sqrt{H})\right)^{1/2}\leq C
\end{align*}
where we used \eqref{souslin} and \eqref{rhon}. Hence, since $S(\varrho(s),\varrho_e(s))$ is nonnegative, one deduces that
$$\forall t\in [0,T],\qquad \lim_{k\to \infty}\int_0^{t+t_k} S(\varrho(s),\varrho_e(s)) ds=\int_0^{+\infty} S(\varrho(s),\varrho_e(s)) ds<+\infty.$$
Therefore, one gets from Theorem \ref{klein} that, for any $T \in (0,\infty)$,
\begin{align*}  \nonumber
&\lim_{k\to \infty} \int_{0}^T \| \varrho(s+t_k)-\varrho_e(s+t_k)\|^2_{\calJ_2} d s \\ \nonumber
&\qquad \leq   C\lim_{k\to \infty} \int_{t_k}^{t_k+T} S(\varrho(s),\varrho_e(s)) d s\\
&\qquad =C\lim_{k\to \infty} \int_0^{t_k+T} S(\varrho(s),\varrho_e(s)) d s-C\lim_{k\to \infty} \int_0^{t_k} S(\varrho(s),\varrho_e(s)) d s=0.
\end{align*}
 We have proved that
\begin{equation}
\label{convintrho}
\lim_{k\to \infty} \int_{0}^T \| \varrho_k(s)-\varrho_e[\varrho_k(s)]\|^2_{\calJ_2}=0
\end{equation} 
If we can characterize the limit of $\varrho_e[\varrho_k(s)]$, then we will be able to identify $\varrho_\infty$ using \fref{convinfty}. For this, we will exploit the continuity of the minimizer with respect to the constraint. When the Lagrange parameter $A_\infty(t)$ is in $L^2$, then the continuity follows from Proposition \ref{contsol} and Corollary \ref{corcont}. Unfortunately, since $\varrho_\infty(t)$ is only in $\calE$ at this point, we only know that $A_\infty(t)$ is in $\H^{-1}_{per}$ and not in $L^2$, and Proposition \ref{contsol} cannot be used as stated. We generalize it to potentials in $\H^{-1}_{per}$ as follows: take two potentials $A_1$ and $A_2$ in $\H^{-1}_{per}$, with regularizations $A_1^\eps$ and $A_2^\eps$ in $\calC^\infty([0,1])$ that converge strongly in $\H^{-1}_{per}$ to $A_1$ and $A_2$. Let $H_{A_1^\eps}=H+A_1^\eps$ and $H_{A_2^\eps}=H+A_2^\eps$ be self-adjoint operators with domain $D(H)$. According to Remark \ref{rem}, we have from Proposition \ref{contsol},
\be \label{estimreg}
\| \exp (-H_{A_1^\eps})- \exp(-H_{A_2^\eps})\|_{\calJ_2}^2 \leq C \big(A^\eps_2-A^\eps_1,n^\eps_1-n^\eps_2\big),
\ee
where $C$ is independent of $A_1^\eps$ and $A_2^\eps$, and $n_1^\eps$ and $n_2^\eps$ are the local densities of $\exp(-H_{A_1^\eps})$ and $\exp(-H_{A_2^\eps})$. We then pass to the limit in \fref{estimreg}. Consider the following quadratic forms, defined on $\H^1_{per}$,
\be \label{qquad}
Q_{A}(\varphi,\psi)=(\nabla \varphi,\nabla \psi)+(A, \overline{\varphi} \psi)_{\H^{-1}_{per},\Hunper},
\ee
for $A=A_1, A_2$. These forms are closed and semibounded, and according to Theorem VIII.15 of \cite{RS-80-I}, they are uniquely associated to self-adjoint operators that we denote by $H_{A_1}$ and $H_{A_2}$. In the same way, the operators $H_{A_1^\eps}$ and $H_{A_2^\eps}$ are associated to quadratic forms that we denote by  $Q_{A_1^\eps}$ and $Q_{A_2^\eps}$. We have then the following estimate, for $j \in \{1,2\}$,
\bea \nonumber
|Q_{A_j}(\varphi,\varphi)-Q_{A_j^\eps}(\varphi,\varphi)| &\leq& \|A_j-A_j^\eps\|_{\H^{-1}_{per}}  \| |\varphi|^2\|_{\H^1}\\ \nonumber
&\leq& C \|A_j-A_j^\eps\|_{\H^{-1}_{per}}  \left( \| \varphi\|^2_{L^4}+\|\varphi\|_{L^\infty} \|\nabla \varphi\|_{L^2} \right)\\ 
&\leq &C \|A_j-A_j^\eps\|_{\H^{-1}_{per}} \left(\left\|\nabla \varphi \right\|_{L^2}^2+\|\varphi\|^2_{L^2} \right), \label{convQ}
\eea
where we used the Gagliardo-Nirenberg inequality \fref{gag}. Theorem 3.6 of \cite{kato}, Chapter 6, shows that $H_{A_j^\eps}$ converges to $H_{A_j}$ in the generalized sense. This in turn implies, according to \cite{kato}, Theorem 2.25, Chapter 4, that the corresponding resolvents converge in $\calL(L^2)$. Finally, since $H_{A_j^\eps}$ is bounded from below uniformly in $\eps$, Theorem VIII.20 of \cite{RS-80-I} then yields that $\exp (-H_{A_j^\eps})$ converges to $\exp (-H_{A_j})$ in $\calL(L^2)$. We show below that the convergence actually holds in $\calJ_1$. We remark first that thanks to \fref{controlmu}-\fref{estimgrad},
\be \label{estimHA}
\|\exp (-H_{A_j^\eps})\|_{\calE} \leq C \exp(C \|A_j^{\eps}\|^4_{\H^{-1}_{per}})\leq C.
\ee
Let then $K$ be a compact operator, and let $P_m$ a sequence of increasing finite-dimensional orthogonal projections on $L^2$ such that $P_m \to \un$ strongly in $\calL(L^2)$. We have
$$
\Tr (\delta \varrho^\eps K)=\Tr ((\un-P_m) \delta \varrho^\eps K)+\Tr (P_m \delta \varrho^\eps K):= T^\eps_1+T^\eps_2,
$$
where
$$
 \delta \varrho^\eps:=\exp (-H_{A_j^\eps})-\exp (-H_{A_j}).
$$
Using \fref{estimHA}, we estimate $T^\eps_1$ by
$$
|T_1^\eps| \leq \|\un-P_m\|_{\calL(L^2)} \|\delta \varrho^\eps \|_{\calJ_1} \|K\|_{\calK} \leq C \|\un-P_m\|_{\calL(L^2)} \|K\|_{\calK} \leq \eta \|K\|_{\calK}
$$
for $\eta>0$ arbitrary and $m \equiv m(\eta)$ sufficiently large. For this very $m$, using the strong convergence of $\delta \varrho^\eps$ to zero in $\calL(L^2)$, we estimate $T_2^\eps$ by
$$
|T_2^\eps| \leq \|P_m\|_{\calJ_1} \|\delta \varrho^\eps \|_{\calL(L^2)} \|K\|_{\calK} \leq C_m \|\delta \varrho^\eps \|_{\calL(L^2)}  \|K\|_{\calK} \leq \eta \|K\|_{\calK},
$$ 
for $\eps$ sufficiently small. We then obtain the strong convergence in $\calJ_1$ by a duality argument,  using that
$$
\| u\|_{\calJ_1} = \sup_{\|K\|_\calK\leq 1} |\Tr (u K)|.
$$
In order to conclude, we remark that \fref{ninfty} and \fref{gradnl2}, together with \fref{estimHA} and $\delta \varrho^\eps \to 0$ in $\calJ_1 \subset \calJ_2$, show that $n_j^\eps$ converges to $n_j$ strongly in $\H^1$. This finally allows us to pass to the limit in \fref{estimreg} and to obtain
\be \label{estimreg2}
\| \exp (-H_{A_1})- \exp(-H_{A_2})\|_{\calJ_2}^2 \leq C \big(A_2-A_1,n_1-n_2\big)_{\H^{-1}_{per},\Hunper}
\ee
for $A_j \in \H^{-1}_{per}$.

\subsection*{Step 4: conclusion} We have everything needed now to identifty $\varrho_\infty$. First of all, from \fref{rhon}, \fref{convinfty}, and the facts that $\varrho_\infty \in L^\infty((0,T),\calE)$ and $\calJ_1 \subset \calJ_2$, we conclude from \fref{ninfty}-\fref{gradnl2} that $n[\varrho_k]$ converges to $n[\varrho_\infty]$ strongly in $L^\infty((0,T),\H^1)$. Applying then \fref{estimreg2} with $A_1=A_\infty(t)$, $A_2=A_k(t)$, $n_1=n[\varrho_\infty(t)]$, and $n_2=n[\varrho_k(t)]$, and using the uniform bound on $A_k$ given in \fref{rhoAn2}, we deduce that $\exp(-H_{A_k(t)})=\varrho_e[\varrho_k(t)]$ converges to $\exp(-H_{A_\infty(t)})=\varrho_e[\varrho_\infty(t)]$ in $L^\infty((0,T),\calJ_2)$. The convergence results \fref{convinfty} and \fref{convintrho} then allow us to conclude that $\varrho_\infty(t)=\varrho_e[\varrho_\infty(t)]=\exp(-H_{A_\infty(t)})$ a.e. in $[0,T]$, and therefore that $\varrho_\infty$ is a local quantum Maxwellian a.e. in $[0,T]$. 

In the next section, we will use the facts that, $t$ a.e., 
\be \label{relQH}
Q_{A_\infty(t)}(\varphi,\psi)=(\varphi,H_{A_\infty(t)} \psi)=(H_{A_\infty(t)} \varphi,\psi), \qquad \forall \varphi,\psi \in D(H_{A_\infty(t)}) \subset \H^1_{per},
\ee
and that, for all $\varphi \in L^2$,
\be \label{rhoD}
\varrho_\infty(t) \varphi \in D(H_{A_\infty(t)}) \qquad \textrm{ with } \qquad  \varrho_\infty(t) \varphi \in \H^1_{per}.
\ee
We have in particular the following estimate, denoting by $(\rho_p,\phi_p)_{p \in \NN^*}$ the spectral elements of $\varrho_\infty$,
\bea \nonumber
 \|\varrho_\infty(s) \varphi\|_{\H^1} &\leq& \left( \sum_{p \in \NN^*} (\rho_p(s))^2 \|\phi_p(s)\|_{\H^1}^2 \right)^{1/2} \|\varphi\|_{L^2}\\ \nonumber
&\leq & \left(\sup_{p \in \NN^*} |\rho_p(s)|\right)^{1/2}\|\varrho_\infty\|^{1/2}_{L^\infty((0,T),\calE)}\|\varphi\|_{L^2}\\
&\leq & C (\Tr \varrho_\infty(s))^{1/2} \|\varrho_\infty\|^{1/2}_{L^\infty((0,T),\calE)}\|\varphi\|_{L^2}<+\infty. \label{rhoH1}
\eea
\subsection{From the local to the global quantum Maxwellian} The analysis would be greatly simplified if we knew that $A_\infty(t) \in L^2$, but since we only have  $A_\infty(t) \in \H^{-1}_{per}$, some additional technicalities are necessary. We first pass  to the limit in the Liouville equation: let $\varphi$ and $\psi$ be two functions in $\calC^\infty_{per}([0,1])$, then, for all $t \geq 0$,
\bee
(\varrho_k(t) \varphi, \psi)&=&(\varrho_k(0) \varphi, \psi)-i\int_0^t \big[(\varrho_k(s) \varphi,H \psi)-(H \varphi, \varrho_k(s) \psi) \big] ds\\
&&+\frac{1}{\tau}\int_0^t ((\varrho_e[\varrho_k(s)]-\varrho_k(s))\varphi,\psi)ds.
\eee
Using \fref{convinfty} and \fref{convintrho}, we find, for all $t\in[0,T]$,
\bea \label{freeL}
(\varrho_\infty(t) \varphi, \psi)&=&(\varrho_\infty(0) \varphi, \psi)-i\int_0^t \big[(\varrho_\infty(s) \varphi,H \psi)-(H \varphi, \varrho_\infty (s)\psi) \big] ds.
\eea

\subsection*{Step 1: rewriting the free Liouville equation} We exploit here the fact that $\varrho_\infty$ is a quantum Maxwellian to simplify the free Liouville equation \fref{freeL}.  Consider $A_\infty \in L^\infty((0,T),\H^{-1}_{per})$ and a regularization $A^\eps_\infty \in \calC^\infty([0,T] \times [0,1])$ such that $A_\infty^\eps(t) \to A_\infty(t)$,  $t$ a.e. strongly in $\H^{-1}_{per}$. We then rewrite the integral term of \fref{freeL} as
\be \label{sumL}
\int_0^t \calL^\eps_1(s,\varphi,\psi)ds+\int_0^t \calL^\eps_2(s,\varphi,\psi)ds
\ee
with
\bee
 \calL^\eps_1(s,\varphi,\psi)&=&\big(\varrho_\infty(s) \varphi,H_{A^\eps_\infty(s)} \psi\big)-\big(H_{A^\eps_\infty(s)} \varphi, \varrho_\infty(s) \psi\big)\\
&=& Q_{A_\infty^\eps(s)}(\varrho_\infty(s) \varphi,\psi)-Q_{A_\infty^\eps(s)}(\varphi,\varrho_\infty(s)  \psi)\\
 \calL^\eps_2(s,\varphi,\psi)&=&\big(A^\eps_\infty(s) \varphi, \varrho_\infty(s) \psi\big)-\big(\varrho_\infty(s) \varphi,A_\infty^\eps(s) \psi\big).
\eee
Above, $H_{A_{\infty}^\eps(s)}=H+A_{\infty}^\eps(s)$ and $Q_{A^\eps_\infty(s)}$ is as in \fref{qquad}. Notice that $\calL_1^\eps$ and $\calL_2^\eps$ are well-defined since we have seen in \fref{rhoD} that $\varrho_\infty(s) \varphi \in \H^1_{per}$ for every $\varphi \in L^2$. 

We now pass to the limit in \fref{sumL}. With a similar estimate as in \fref{convQ}, we obtain, $s$ a.e., 
$$ \calL^\eps_1(s,\varphi,\psi) \to \calL_1(s,\varphi,\psi)=Q_{A_\infty(s)}(\varrho_\infty(s) \varphi,\psi)-Q_{A_\infty(s)}(\varphi,\varrho_\infty(s)  \psi).$$
In order to pass to the limit in the first integral of \fref{sumL}, we derive the following estimate, obtained from \fref{rhoH1} and the embedding $\H^1 \subset L^\infty$,
\bee
|\calL^\eps_1(s,\varphi,\psi)| &\leq&  \|\nabla (\varrho_\infty(s) \varphi)\|_{L^2}\|\nabla \psi\|_{L^2}+ \|A_\infty^\eps(s)\|_{\H^{-1}_{per}}\| (\varrho_\infty(s) \varphi) \psi\|_{\H^1}\\
&&+ \|\nabla (\varrho_\infty(s) \psi)\|_{L^2}\|\nabla \varphi\|_{L^2}+ \|A_\infty^\eps(s)\|_{\H^{-1}_{per}}\| (\varrho_\infty(s) \psi) \varphi\|_{\H^1}\\
&\leq & C.
\eee
Hence, for all $t\in [0,T]$, the first term in \fref{sumL} converges by dominated convergence to
$$
\int_0^t \left[Q_{A_\infty(s)}(\varrho_\infty(s) \varphi,\psi)-Q_{A_\infty(s)}(\varphi,\varrho_\infty(s)  \psi) \right]ds.
$$
Then, since $\varrho_\infty(t)$ and $H_{A_\infty(t)}$ commute, we have $(H_{A_\infty(s)} \varrho_\infty(s))^*=H_{A_\infty(s)} \varrho_\infty(s)$, and according to \fref{relQH} and \fref{rhoD}, the integrand becomes
\begin{align*}
&\big(H_{A_\infty(s)} \varrho_\infty(s) \varphi,\psi\big)-\big(\varphi, H_{A_\infty(s)} \varrho_\infty(s)  \psi \big)\\
&\hspace{2cm}=\big(H_{A_\infty(s)} \varrho_\infty(s) \varphi,\psi\big)-\big((H_{A_\infty(s)} \varrho_\infty(s ) )^*\varphi,   \psi \big)\\
&\hspace{2cm}=\big(H_{A_\infty(s)} \varrho_\infty(s) \varphi,\psi\big)-\big(H_{A_\infty(s)} \varrho_\infty(s) \varphi,   \psi \big)=0.
\end{align*}
The term involving $\calL_1^\eps$ therefore vanishes. It remains to treat $\calL_2^\eps$, that we recast as follows
$$
\calL^\eps_2(s,\varphi,\psi)=\big(A^\eps_\infty(s), \overline{\varphi}\, \varrho_\infty(s) \psi\big)_{\H^{-1}_{{per}},\H^{1}_{{per}}}-\big(A^\eps_\infty(s), \psi\, \overline{\varrho_\infty(s) \varphi} \big)_{\H^{-1}_{{per}},\H^{1}_{{per}}}.
$$
In a similar way as for the term $\calL^\eps_1$, we can pass to the limit in second term of \fref{sumL}, and obtain as limit
$$
\int_0^t \left[\big(A_\infty(s), \overline{\varphi}\, \varrho_\infty(s) \psi\big)_{\H^{-1}_{\textrm{per}},\H^{1}_{{per}}}-\big(A_\infty(s), \psi\, \overline{\varrho_\infty(s) \varphi} \big)_{\H^{-1}_{{per}},\H^{1}_{{per}}}\right]ds.
$$
Finally, since $\varrho_\infty \in L^\infty((0,T),\calE)$, the series
$$
\varrho_\infty^N(s)\varphi:=\sum_{p=1}^N \rho_p(s) \phi_p(s) (\phi_p(s),\varphi), \qquad \forall \varphi \in L^2,
$$
converges absolutely in $\H^1$, $s$ a.e.,  and we rewrite the integrand as 
\bee
\calL_2(s,\varphi,\psi)&=&\sum_{p \in \NN^*} \rho_p(s)\big(A_\infty(s), \overline{\varphi} \phi_p(s)\big)_{\H^{-1}_{{per}},\H^{1}_{{per}}} (\phi_p(s),\psi)\\
&&-\sum_{p \in \NN^*} \rho_p(s)\big(A_\infty(s), \overline{\phi_p(s)} \psi \big)_{\H^{-1}_{{per}},\H^{1}_{{per}}} \overline{(\phi_p(s),\varphi)}.
\eee
We therefore obtain the following equation for $\varrho_\infty$: for all $\varphi, \psi \in \calC^\infty_{{per}}([0,1])$, and all $t \in [0,T]$,
\be \label{reprhoinf}
(\varrho_\infty(t)\varphi,\psi)=(\varrho_\infty(0)\varphi,\psi)- i\int_0^t \calL_2(s,\varphi,\psi) ds.
\ee
Note that the above relation can be extended to $\varphi, \psi \in \H^1_{per}$ since $\calL_2$ verifies
$$
|\calL_2(s,\varphi,\psi)| \leq C \|\varphi\|_{\H^1_{per}} \|\psi\|_{\H^1_{per}}.
$$

\subsection*{Step 2: $n[\varrho_\infty(t)]$ is constant in time for $t \in [0,T]$} We show now that the local density of the second term of the r.h.s. of \fref{reprhoinf} is zero. This requires some attention since we do not know at this point that $A_\infty \varrho_\infty \in \calJ_1$. For this matter, we remark first that $\varrho_\infty(t) \in \calJ_1$ for all $t\in [0,T]$, and therefore that it is enough to identify $n[\varrho_\infty(t)]$ by choosing a particular basis of $L^2$ in the computation of $\Tr (\varrho_\infty(t) \phi)$, where $\phi \in \calC^\infty_{per}([0,1])$. We pick $(e_j)_{j \in \NN^*}$ the eigenbasis of $H$, for the simple reason that $e_j \in \H^1_{per}$. Then,
\be \label{sumN}
\sum_{j=1}^N (\varrho_\infty(t)\phi e_j,e_j)=\sum_{j=1}^N (\varrho_\infty(0)\phi e_j,e_j)-i\int_0^t \sum_{j=1}^N \calL_2(s,\phi e_j,e_j) ds.
\ee
Denote by $u_{N,p}$ and $v_{N,p}$ the partial sums
$$
u_{N,p}=\sum_{j=1}^N e_j \overline{(\phi_p, e_j)}, \qquad v_{N,p}=\sum_{j=1}^N e_j \overline{(\phi_p, e_j \phi)}.
$$
Since $\phi \in \calC^\infty_{per}([0,1])$, and $\phi_p(s) \in \H^1_{per}$, $s$ a.e. (to see this fact, choose $\varphi=\phi_p$ in \fref{rhoD} so that $\rho_p \phi_p \in \H^1_{per}$ , and use the fact that $\varrho_\infty$ is a quantum Maxwellian and has therefore a full rank according to Theorem \ref{theo1}, which implies that $\rho_p>0$ for all $p \in \NN^*$), $u_{N, p}$ and $v_{N,p}$ both converge strongly in $\H^1$, $s$ a.e., as $N \to \infty$ to $\phi_p$ and $\phi_p \overline{\phi}$, respectively. We have then
\bee
\sum_{j=1}^N \calL_2(s,\phi e_j,e_j)&=&\sum_{p \in \NN^*} \rho_p(s)\big(A_\infty(s), \overline{ \phi u_{N,p}} \phi_p(s)\big)_{\H^{-1}_{{per}},\H^{1}_{{per}}} \\
&&-\sum_{p \in \NN^*} \rho_p(s)\big(A_\infty(s), \overline{\phi_p(s)} v_{N,p} \big)_{\H^{-1}_{{per}},\H^{1}_{{per}}}\\
&:=& \sum_{p \in \NN^*} \big(U_{N,p}(s)-V_{N,p}(s) \big),
\eee
with, $s$ a.e., for all $p \in \NN^*$,
\begin{align*}
&\lim_{N \to \infty} U_{N,p}(s)=\rho_p(s) \big(A_\infty(s), \overline{ \phi} |\phi_p(s)|^2 \big)_{\H^{-1}_{{per}},\H^{1}_{{per}}},\\
& \lim_{N \to \infty} V_{N,p}(s)=\rho_p(s) \big(A_\infty(s), \overline{ \phi} |\phi_p(s)|^2 \big)_{\H^{-1}_{{per}},\H^{1}_{{per}}}.
\end{align*}
Since
$$
\|u_{N,p}\|_{\H^1} \leq \|\phi_p\|_{\H^1},
$$
we have the estimate, using that $\H^1 \subset L^\infty$,
\bea \nonumber
|U_{N,p}| &\leq& C \rho_p \|A_\infty\|_{L^\infty((0,T),\H^{-1}_{per})} \|\phi_p\|_{\H^1} \|u_{N,p}\|_{\H^1} \|\phi \|_{\H^1} \\
&\leq& C \rho_p \|\phi_p\|^2_{\H^1}, \label{UU}
\eea
and a similar one holds for $V_{N,p}$. Since moreover, 
$$\sum_{p  \in \NN^*}  \rho_p(s) \|\phi_p(s)\|^2_{\H^1} \leq  \|\varrho_\infty\|_{L^\infty((0,T),\calE)} < \infty, $$
 we can use dominated convergence for series and obtain that, $s$ a.e.,
$$
\lim_{N \to \infty}\sum_{j=1}^N \calL_2(s,\phi e_j,e_j)=0.
$$
Since moreover, thanks to \fref{UU},
$$
\left| \sum_{j=1}^N \calL_2(s,\phi e_j,e_j) \right| \leq C \|A_\infty\|_{L^\infty((0,T),\H^{-1}_{per})}  \|\varrho_\infty\|_{L^\infty((0,T),\calE)},
$$
and also $\varrho_\infty(t) \in \calJ_1$ for all $t \in [0,T]$, we can take the limit $N\to \infty$ in \fref{sumN} and obtain, for all $t \in [0,T]$ and all $\phi \in \calC^\infty_{{per}}([0,1])$,
\bee
(\phi,n[\varrho_\infty(t)])&=&(\phi,n[\varrho_\infty(0)]).
\eee
This means that $n[\varrho_\infty(t)])=n[\varrho_\infty(0)]$, for all $t\in [0,T]$.

\subsection*{Step 3: conclusion} Since $n[\varrho_\infty(t)])=n[\varrho_\infty(0)]$, for all $t\in [0,T]$, we have that $\varrho_e[\varrho_\infty(t)]=\exp(-H_{A_\infty(t)})$ is constant a.e. in $[0,T]$, say equal to $\exp(-H_{B_\infty})$ with $B_\infty \in \H^{-1}_{{per}} $. As a consequence, since  $\varrho_\infty(t)=\varrho_e[\varrho_\infty(t)]$ a.e. in $[0,T]$, we have that $\varrho_\infty(t)$ is constant a.e. in $[0,T]$. Since actually $\varrho_\infty$ is continuous with values in $\calJ_1$, this means that $\varrho_\infty(t)$ is constant for all $t\in[0,T]$ and equal to  $\exp(-H_{B_\infty})$. Therefore, \fref{freeL} yields, for every $\varphi,\psi \in \calC^\infty_{{per}}([0,1])$, $t \in [0,T]$,
$$
(\varrho_\infty \varphi,H \psi)=(H \varphi, \varrho_\infty \psi).
$$
Denoting by $(\mu_p, e_p)_{p \in \NN^*}$ the spectral elements of $H$ with domain $D(H)$, with in particular $e_p \in \calC^\infty_{{per}}([0,1])$, and where $e_1(x)=1$ is the ground state associated with $\mu_1=0$, we find, for any $p \neq 1$,
$$
0=(\varrho_\infty e_p,H e_1)=(H e_p, \varrho_\infty e_1)=\mu_p (e_p,\varrho_\infty e_1).
$$
Since $\mu_p>0$ for $p \neq 1$, this shows that $(e_p,\varrho_\infty e_1)=0$ for all $p \neq 1$. Since morever $\varrho_\infty=\exp(-H_{B_\infty})$ is a quantum Maxwellian and has therefore a full rank according to Theorem \ref{theo1}, we have $\ker \varrho_\infty=\{0 \}$ which ensures that $\varrho_\infty e_1$ is not identically zero. Together with the fact that the ground state is nondegenerate, this means that $\varrho_\infty e_1=c  e_1$, for some constant $c$. We have therefore obtained that $e_1$ is an eigenfunction of the operator $H_{B_\infty}$, say associated with the eigenvalue $\lambda_{q_0}$.  Hence, denoting by $(\lambda_p,\phi_p)_{p \in \NN^*}$ the spectral elements of $H_{B_\infty}$, we have, for every $\varphi \in \H^1_{{per}}$,
\bee
Q_{B_\infty}(\varphi,e_1)&=&(\varphi,H_{B_\infty} e_1)=\lambda_{q_0} (\varphi,e_1)\\
&=& (\nabla \varphi, \nabla e_1)+(B_\infty,\overline{\varphi} e_1)_{\H^{-1}_{{per}},\H^1_{{per}}}=(B_\infty,\overline{\varphi} e_1)_{\H^{-1}_{{per}},\H^1_{{per}}}.
\eee
Since $\varphi$ is arbitrary, this shows that $B_\infty=\lambda_{q_0}$, and $B_\infty$ is therefore independent of $x$. The operator $\varrho_\infty$ then reads $e^{-\lambda_{q_0}} e^{-H}$, and verifies, for $t\in [0,T]$,
$$
e^{-\lambda_{q_0}} \Tr e^{-H}=\Tr \varrho_\infty= \Tr \varrho^0.
$$
Following Lemma \ref{momN}, this means that $\varrho_\infty(t)=\varrho_g$ for all $t \in [0,T]$, and in particular that the subsequence $(\varrho_k)_k$ verifies $\varrho(t_k) \to \varrho_g$ in $\calJ_1$. Since $\varrho_g$ is the unique solution to the global moment problem, the entire sequence converges, which concludes the proof of the theorem.
\section{Additional Proofs} \label{secother}
\subsection{Proof of Lemma \ref{eqfreeE}} \label{prooffreeE} 
We start by regularizing $F$ in order to justify the calculations. For $\eta \in (0,1]$, let  $\beta_\eta(x)=(x+\eta) \log (x+\eta)-x-\eta \log \eta$, and define for $\varrho \in \calE_+$,
\bee
F_\eta(\varrho)&=&\Tr \big(\beta_\eta(\varrho)\big)+\Tr \left(\sqrt{H} (\II+\eta\sqrt{H})^{-1} \varrho (\II+\eta\sqrt{H})^{-1} \sqrt{H} \right)\\
&:=& S_\eta(\varrho)+ K_\eta(\varrho).
\eee
For any trace-class self-adjoint operator $u$, the G\^ateaux derivative of $S_\eta$ at $\varrho \in \calE_+$ in the direction $u$ is well-defined and given by, according to Lemma \ref{lemdiff} of the Appendix,
$$
DS_\eta(\varrho)(u)=\Tr \big(\log(\eta+\varrho) u\big).
$$
Hence, since $\partial_t \varrho \in \calC^0([0,T],\calJ_1)$, we can write
$$
\frac{d S_\eta(\varrho(t))}{dt}=DS_\eta(\varrho)(\partial_t \varrho(t))=\Tr\big(\log(\eta+\varrho(t)) \partial_t \varrho(t)\big).
$$
Note that the expression makes sense since $\log(\eta+\varrho(t))$ is a bounded operator. Similarly, since the operator $\sqrt{H} (\II+\eta\sqrt{H})^{-1}$ is bounded, 
$$
\frac{d K_\eta(\varrho(t))}{dt}=\Tr \left(\sqrt{H} (\II+\eta\sqrt{H})^{-1} \partial_t \varrho(t) (\II+\eta\sqrt{H})^{-1} \sqrt{H} \right).
$$
We can then replace $\partial_t \varrho$ by its expression given by the Liouville equation. Now we claim that, since $\varrho$ and $\log(\eta+\varrho)$ commute,
\be \label{trlogzero}
\Tr\big(\log(\eta+\varrho(t)) [H, \varrho(t)]\big)=0, \qquad \forall t \in [0,T].
\ee
Even though $H \varrho(t) \in \calJ_1$ for each $t$ positive, the proof requires some regularization since $H$ is unbounded and cyclicity of the trace cannot be directly applied. Let thus $H_\eps=H(\II+\eps H)^{-1} \in \calL(L^2)$, and \fref{trlogzero} is verified for $H$ replaced by $H_\eps$ by cyclicity of the trace. Write then,
\begin{align*}
&\left| \Tr \big(\log(\eta+\varrho(t)) (H\varrho(t)-H_\eps \varrho(t))\big) \right|\\
 & \qquad \qquad\leq \| \log(\eta+\varrho(t)) \|_{\calL(L^2)} \| H \varrho(t)\|_{\calJ_1} \| \II-(\II+\eps H)^{-1}\|_{\calL(L^2)}.
\end{align*}
Since the first two terms on the r.h.s are uniformly bounded in time, and since the last one converges to zero as $\eps \to 0$, we conclude, along with a similar argument for the other term in the commutator, that the claim \fref{trlogzero} holds. Therefore,
$$
\frac{d S_\eta(\varrho(t))}{dt}:=S'_\eta(t):=\frac{1}{\tau} \Tr\big(\log(\eta+\varrho(t)) (\varrho_e(t)-\varrho(t))\big).
$$
Furthermore, proceeding as for \fref{trlogzero}, we have that
$$
\Tr \left(\sqrt{H} (\II+\eta\sqrt{H})^{-1} [H, \varrho] (\II+\eta\sqrt{H})^{-1} \sqrt{H} \right)=0,
$$ 
and consequently
$$
\frac{d K_\eta(\varrho(t))}{dt}=K'_\eta(t):=\frac{1}{\tau} \Tr\left(\sqrt{H} (\II+\eta\sqrt{H})^{-1} (\varrho_e(t)-\varrho(t))(\II+\eta\sqrt{H})^{-1} \sqrt{H} \right).
$$
For any $t,s \geq 0$, write then
\begin{equation}
\label{F_eta}
F_\eta(\varrho(t))-F_\eta(\varrho(s))=\int_s^t (S'_\eta(z)+K'_\eta(z))dz.
\end{equation}
We pass now to the limit in \eqref{F_eta}. Let 
$$
K'(t)=\frac{1}{\tau}\Tr \big(H (\varrho_e(t)-\varrho(t))\big),
$$
which is well defined since both $H\varrho_e$ and $H\varrho$ belong to $L^\infty((0,T),\calJ_1)$. Hence,
\begin{align*}
&\tau |K'_\eta(t)-K'(t)|\\
&\qquad \qquad= \left|\Tr\left(((\II+\eta\sqrt{H})^{-1} H (\II+\eta\sqrt{H})^{-1} -H) (\varrho_e(t)-\varrho(t)) \right) \right|\\
&\qquad \qquad\leq \left\| \left((\II+\eta\sqrt{H})^{-1} H (\II+\eta\sqrt{H})^{-1} -H\right) (\II+H)^{-1}  \right\|_{\calL(L^2)} \\
&\qquad \qquad \qquad \times \|(\II+H)(\varrho_e(t)-\varrho(t)) \|_{\calJ_1}.
\end{align*}
Since
$$
\| \big((\II+\eta\sqrt{H})^{-1} H (\II+\eta\sqrt{H})^{-1}-H\big) (\II+ H)^{-1}  \|_{\calL(L^2)} \to 0 \qquad \textrm{as} \qquad \eta \to 0,
$$
 and both $H\varrho_e$ and $H\varrho$ are bounded in $L^\infty((0,T),\calJ_1)$, we have that 
$$\lim_{\eta\to 0} \|K'_\eta-K'\|_{L^\infty(0,T)}=0.$$
Recall now that $\varrho_e(t)=e^{-(H+A(t))}$ with $A\in L^\infty((0,T),L^2(0,1))$.
Let $A_\eps \in \calC^0([0,T] \times [0,1])$ be a smooth sequence such that $A_\eps \to A$  strongly in $L^\infty((0,T),L^2(0,1))$. Then,
$$
K'(t)=\frac{1}{\tau}\Tr \big((H+A_\eps(t)) (\varrho_e(t)-\varrho(t))\big)-\frac{1}{\tau}\Tr \big(A_\eps(t)(\varrho_e(t)-\varrho(t))\big).
$$
The second term above is equal to zero since $n[\varrho_e(t)]=n[\varrho(t)]$ for all $t \geq 0$. We pass to the limit in the first one observing that
\begin{align*}
&\left|\Tr \big((A_\eps(t)-A(t)) (\varrho_e(t)-\varrho(t))\big) \right| \\
& \qquad \leq \|(A_\eps(t)-A(t)) (\II+H)^{-1} \|_{\calL(L^2)} \|(\II+H) (\varrho_e(t)-\varrho(t)) \|_{\calJ_1},
\end{align*}
together with the facts that $H\varrho, H\varrho_e \in L^\infty((0,T), \calJ_1)$ and
\bee
\|(A_\eps(t)-A(t)) (I+H)^{-1} \|_{\calL(L^2)} &\leq & C \| A_\eps(t)-A(t) \|_{L^2}  
\eee
since $(\II+H)^{-1} : L^2 \to \H^2 \subset L^\infty$. Therefore, we find that
$$
K'(t)=-\frac{1}{\tau}\Tr \big(\log (\varrho_e(t)) (\varrho_e(t)-\varrho(t))\big).
$$
We are done with the term $K_\eta'$. 

Let us now prove that that the free energy $F_\eta(\varrho)$ converges to $F(\varrho)$ uniformly on $[0,T]$ since $\varrho \in \calC^0([0,T],\calE_+)$. The convergence of the energy term is straightforward since $\varrho \in \calC^0([0,T],\calE_+)$. Regarding the entropy term, we remark first that the function $\beta_\eta$ converges to $\beta$ uniformly on all $[0,M]$, $M>0$, and that one has
$$\forall s\in [0,M],\quad \left|\beta_\eta(s)\right|\leq C_M\sqrt{s},$$
with $C_M$ independent of $\eta$. Let $M=\max_{t \in [0,T]}\|\varrho(t)\|_{\calL(L^2)}$. For all $N\in \NN^*$, by using Lemma \ref{lieb}, we get, for all $t \in [0,T]$,
\bea \nonumber
\sum_{p\geq N}\left|\beta_\eta(\rho_p(t))\right|&\leq& C_M\sum_{p\geq N}\sqrt{\rho_p(t)} \leq  C_M \left(\sum_{p\geq N} p^2\rho_p(t) \right)^{1/2} \left(\sum_{p\geq N} \frac{1}{p^2}\right)^{1/2} \\
&\leq& \frac{C_M}{\sqrt{N}}\left(\Tr \sqrt{H}\varrho(t)\sqrt{H}\right)^{1/2} \leq \frac{C_M}{\sqrt{N}}, \label{estbetaeta}
\eea
where $\rho_p(t)$ denotes the $p$-th eigenvalue of $\varrho(t)$. Hence, decomposing
\bee
\left|\Tr \big(\beta_\eta(\varrho(t)) \big) -\Tr \big(\beta(\varrho(t)) \big)\right|&\leq&
\sum_{p< N}\left|\beta_\eta(\rho_p(t))-\beta(\varrho_p(t))\right|\\
&&+\sum_{p\geq N}\left|\beta_\eta(\rho_p(t))\right|+\sum_{p\geq N}\left|\beta(\rho_p(t))\right|,
\eee
we obtain the desired result from the uniform convergence of $\beta_\eta$, the fact that $\rho_p(t) \leq M$, and finally \fref{estbetaeta}. 

Let us now prove the convergence of the term $S'_\eta$ in \eqref{F_eta}.
One shows in the same way that, that uniformly on $[0,T]$, 
$$
\Tr\big(\log(\eta+\varrho(t)) \varrho(t)\big) \to \Tr\big(\log(\varrho(t)) \varrho(t)\big) \qquad \textrm{as} \qquad \eta \to 0.
$$
It remains  finally to treat the term $S'_{2,\eta}:=\Tr (\log (\eta+\varrho(t))\varrho_e(t))$, which is a little more technical since we cannot simultaneously diagonalize $\varrho$ and $\varrho_e$. Denote by $(\rho_i,\phi_i)_{i \in \NN^*}$ the spectral elements of $\varrho$. Let $\eta_0 \in (0,1)$, $\eta \in (0,\eta_0)$, and $N(t) \in \NN^*$ such that $\eta_0+\rho_{N(t)+1}(t) <1$ and $\eta_0+\rho_{N(t)}(t) \geq 1$. Then, 
\bee
\tau S'_{2,\eta}(t)&=&\sum_{n\leq N(t)} \log(\eta+\rho_n(t)) (\varrho_e(t) \phi_n,\phi_n)+\sum_{n>N(t)}^\infty \log(\eta+\rho_n(t)) (\varrho_e(t) \phi_n,\phi_n)\\
&:=&f_\eta^1(t)+f_\eta^2(t).
\eee
Since $\varrho_e \geq 0$ and $\varrho_e \in \calC^0([0,T],\calJ_1)$, we have
$$
|f_\eta^1(t)| \leq M_0 \sum_{n\leq N(t)} (\varrho_e(t) \phi_n,\phi_n) \leq M_0 \max_{t \in [0,T]}\| \varrho_e(t)\|_{\calJ_1} \leq C,
$$
where
$$
M_0=\max_{ x \in [1-\eta_0, \eta_0 + \max_{t \in [0,T] }\| \varrho(t)\|_{\calL(L^2)}]} |\log x|.
$$
We can then use dominated convergence to obtain that, for all $(t,s) \in [0,T] \times [0,T]$:
$$
\lim_{\eta \to 0} \int_{s}^t f_\eta^1(z) dz= \int_{s}^t \sum_{n\leq N(z)} \log(\rho_n(z)) (\varrho_e(z) \phi_n,\phi_n) dz.
$$
Regarding $f_\eta^2$, we observe that for all $t\in[0,T]$ and $n>N(t)$, the term $-\log(\eta+\rho_n(t)) (\varrho_e(t) \phi_n,\phi_n)$ is positive and strictly increasing as $\eta \to 0$. Using monotone convergence, first for series, we obtain that pointwise in $t$, $f_\eta^2(t)\to f^2(t)$, and secondly for integrals, we have for all $(t,s) \in [0,T] \times [0,T]$:
$$
\lim_{\eta \to 0} \int_{s}^t f_\eta^2(z) dz= \int_{s}^t \sum_{n>N(z)} \log(\rho_n(z)) (\varrho_e(z) \phi_n,\phi_n) dz.
$$ 
Hence, for all $(t,s) \in [0,T] \times [0,T]$,
$$
\tau \lim_{\eta \to 0} \int_s^t S'_\eta(z)dz= \int_s^t \Tr \big(\log (\varrho(z)) (\rho_e(z)-\varrho(z))\big)dz.
$$
This concludes the proof.
\subsection{Proof of Proposition \ref{severalestimates}} \label{proofseveral} The proof consists in combining various results of \cite{MP-JSP}. We start with the first estimate. Given $n$ as in Theorem \ref{theo1}, let $\phi_1:=\|n\|^{-1/2}_{L^1} \sqrt{n}$ and complete $\phi_1$ to an orthonormal basis  $(\phi_i)_{i \geq 1}$ of $L^2$. For all $\psi \in L^2$, consider the density operator $\nu$ defined by $\nu \psi:=\sqrt{n}\, (\sqrt{n},\psi)=\|n\|_{L^1} \phi_1(\phi_1,\psi).$ Since $n \in \Hunper$, it follows that $\nu \in \calE_+$. Besides, since $n[\nu]=n$ and $\varrho[n]$ is the minimizer of $F$, we have
\be \label{ineqF}
F(\varrho[n]) \leq F(\nu).
\ee
Since $\nu$ is of rank one, it it then not hard to see that 
$$
F(\nu)=\beta(\|n\|_{L^1})+\| \nabla \sqrt{n}\|^2_{L^2}.
$$
Furthermore, denoting by $(\rho_p)_{p> p_0}$ the eigenvalues of $\varrho[n]$ that belong to the interval $(0,e]$, we can show, proceeding as in \fref{estbetaeta}, that there exists $C>0$ such that
\be \label{estbeta}
-\Tr \big(\beta(\varrho[n]) \big) \leq \sum_{p > p_0}\left|\beta (\rho_p)\right| \leq C \left( \Tr \big(\sqrt{H} \varrho[n] \sqrt{H}\big)\right)^{1/2}.
\ee
Easy algebra then yields from \fref{ineqF} and \fref{estbeta},
$$
\Tr \big( \sqrt{H} \varrho[n] \sqrt{H} \big) \leq C\left(1+\beta(\|n\|_{L^1})+\| \nabla \sqrt{n}\|^2_{L^2}\right
).
$$
This provides us with an estimate for $\varrho$ in $\calE$. Regarding the entropy term in \fref{estimsolmom1}, we only need to estimate the sum of the eigenvalues of $|\varrho[n] \log \varrho[n]|$. We already have \fref{estbeta} for the indices $p>p_0$. For the remaining part $p \leq p_0$, owing to the fact that
$$\rho_p \leq \| \varrho[n]\|_{\calL(L^2)} \leq \|\varrho[n]\|_{\calJ_1}=\|n\|_{L^1}, \qquad \forall p \in \NN^*,$$
we find the estimate
\bee
\sum_{p \leq p_0}\left|\rho_p \log \rho_p\right|&\leq& \log \|n\|_{L^1} \Tr\big(\varrho[n] \big)=\log \|n\|_{L^1}\|n\|_{L^1}.
\eee
Together with \fref{estbeta}, this yields
$$
\sum_{p \geq 1}\left|\rho_p \log\rho_p\right| \leq C+C \beta(\|n\|_{L^1})+C\|\varrho[n]\|_{\calE},
$$
which concludes the proof of the first estimate \fref{estimsolmom1}. 

For the second estimate \fref{estimsolmom2}, we deduce from \fref{defAA} and \fref{ident} that, since $\H^1 \subset L^\infty$,
$$
\left|\left(A,\psi\right)_{\H^{-1}_{per},\Hunper}\right|\leq C\left\|\frac{\psi}{n}\right\|_{\H^1}\left(\Tr \big(|\varrho\log\varrho|\big)+\Tr \big(\varrho\big)+\Tr\big(\sqrt{H}\varrho\sqrt{H}\big)\right),
$$
and the result follows from
$$
\left\|\frac{\psi}{n}\right\|_{\H^1} \leq \frac{\| \psi\|_{\H^1}}{\underline{n}}+C\frac{\| \psi\|_{\H^1}\| \nabla \sqrt{n}\|_{L^2} }{\underline{n}^{3/2}}
$$
and easy manipulations. This ends the proof of the proposition.
\subsection{Proof of Lemma \ref{hold0}} \label{proofhold0}
Decompose $\calL(t) \varrho-\varrho$ as
\be \label{L1}
\calL(t)\varrho -\varrho=e^{-i tH} \varrho e^{i tH}- \varrho e^{i tH}+\varrho e^{i tH}-\varrho=S_1+S_2.
\ee
Since, for all $\lambda\in \RR^+$, we have
\be \label{L2} \left|\frac{e^{-it\lambda}-1}{1+\lambda}\right|=\left|\int_0^t\frac{\lambda e^{-is\lambda}}{1+\lambda}ds\right|\leq t,\ee
we deduce that
$$
\| (e^{-i tH}-\II)(\II+H)^{-1}\|_{\calL(L^2)} \leq t.
$$
Therefore,
$$
\|S_1\|_{\calJ_1} \leq \| (e^{-i tH}-\II)(\II+H)^{-1} (\II+H)\varrho e^{i tH}\|_{\calJ_1} \leq t \|(\II+H)\varrho \|_{\calJ_1}.
$$
In the same way,
$$
\|S_2\|_{\calJ_1}=\|S^*_2\|_{\calJ_1}=\| e^{-i tH}\varrho-\varrho\|_{\calJ_1} \leq t \|(\II+H)\varrho \|_{\calJ_1}.
$$
Now, using the polar decomposition $(\II+H)\varrho=U|(\II+H)\varrho|$, we conclude by noticing that
\bee
\Tr \big(|(\II+H)\varrho| \big)&=&\Tr \big( U^*(\II+H)\varrho(\II+H)(\II+H)^{-1} \big)\\
&\leq & \Tr \big((\II+H)\varrho(\II+H) \big)\\
&\leq & C \| \varrho\|_{\calH},
\eee
since $U$ is an isometry on $\overline{\mbox{Ran\,}(\II+H)\varrho}$ and $(\II+H)^{-1}$ is bounded. This ends the proof.
\subsection{Proof of estimate \eqref{hold00}} \label{proofhold00}
We start from \fref{L1} and write, using \fref{L2} with $\lambda \in \RR^+$,
$$
\left|\frac{e^{-it\lambda}-1}{1+\sqrt{\lambda}}\right|=\left|e^{-it\lambda}-1\right|^{1/2}\frac{|1+\lambda|^{1/2}}{1+\sqrt{\lambda}}\left|\frac{e^{-it\lambda}-1}{1+\lambda}\right|^{1/2} \leq  2\left|\frac{e^{-it\lambda}-1}{1+\lambda}\right|^{1/2}\leq 2 \sqrt{t}.  
$$
This yields
$$
\| (e^{-i tH}-\II)(\II+\sqrt{H})^{-1}\|_{\calL(L^2)} \leq 2 \sqrt{t},
$$
and we conclude following the same lines as in the proof of Lemma \ref{hold0}.
\subsection{Proof of Lemma \ref{hold2}} \label{proofhold2}
Let $w(t,s):=\varrho(t)-\varrho(s)$. Since $\varrho(t) \in \calH$ for all $t\in [0,T]$, we deduce that $|w(t,s)| \in \calH$, for all $(t,s)$, and that $ |w(t,s)|^{1/2} H \in \calJ_2 $. Define then
$$
S:=\Tr \big(\sqrt{H} |w(t,s)| \sqrt{H} \big)=\Tr  \big(|w(t,s)| H \big).
$$
We justify the last equality by writing
\bee
S&=&\lim_{\eps \to 0}\Tr \big(\sqrt{H}(\II+\eps \sqrt{H})^{-1}|w(t,s)|\sqrt{H}\big)\\
&=&\lim_{\eps \to 0}\Tr \big(|w(t,s)|H(\II+\eps \sqrt{H})^{-1}\big)\\
&=&\Tr \big(|w(t,s)|H \big).
\eee
The first equality holds since $ \sqrt{H}(\II+\eps \sqrt{H})^{-1} (\II+\sqrt{H})^{-1}$ converges to $\sqrt{H} (\II+\sqrt{H})^{-1}$ in $\calL(L^2)$ and since $  (\II+\sqrt{H}) |w(t,s)|\sqrt{H}\in \calJ_1$. The last equality follows from the convergence of $(\II+\eps \sqrt{H})^{-1}$ in $\calL(L^2)$ to the identity, and from the fact that $|w(t,s)|H \in \calJ_1$. Then,
\bee
S&=&\Tr  \big(|w(t,s)|^{1/2}|w(t,s)|^{1/2} H \big)\\
&\leq&  \||w(t,s)|^{1/2}\|_{\calJ_2} \| |w(t,s)|^{1/2} H\|_{\calJ_2}\\
& \leq & \big(\Tr \big(|w(t,s)| \big)\big)^{1/2} \left( \Tr \big(H |w(t,s)| H \big)\right)^{1/2} \\
&\leq & \big(\Tr \big(|w(t,s)|\big)\big)^{1/2} \sqrt{2} \|\varrho\|^{1/2}_{\calC^0([0,T],\calH)}.
\eee
This ends the proof.

\section{Appendix} \label{appen}
The next four lemmas are proved in \cite{MP-JSP}.
\begin{lemma}[\cite{MP-JSP}, Lemma 3.1]\label{strongconv} 
Let $(\varrho_k)_{k\in \NN}$ be  a bounded sequence of $\calE_+$. Then, up to an extraction of a subsequence, there exists $\varrho\in \calE_+$ such that
$$
\varrho_k\to\varrho\mbox{ in }\calJ_1\quad \mbox{and}\quad \sqrt{\varrho_k}\to \sqrt{\varrho}\mbox{ in }\calJ_2\quad \mbox{as } k\to +\infty
$$
and
$$
\Tr \big(\sqrt{H}\varrho\sqrt{H}\big)\leq \liminf_{k\to +\infty} \Tr \big(\sqrt{H}\varrho_k\sqrt{H}\big).
$$
\end{lemma}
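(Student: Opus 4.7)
The plan is to exploit the fact that the extra control $\Tr(\sqrt{H}|\varrho_k|\sqrt{H})\leq C$ provides compactness, through the compactness of the resolvent $(\II+\sqrt{H})^{-1}$ on $L^2(0,1)$. Writing $\sqrt{\varrho_k}$ for the unique nonnegative square root (which exists since $\varrho_k\geq 0$), the idea is first to obtain strong convergence of $\sqrt{\varrho_k}$ in $\calJ_2$ and then to deduce strong convergence of $\varrho_k=\sqrt{\varrho_k}\cdot\sqrt{\varrho_k}$ in $\calJ_1$.

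First I would argue that the sequence $\sqrt{\varrho_k}$ is bounded in $\calJ_2$ with $\|\sqrt{\varrho_k}\|_{\calJ_2}^2=\Tr\varrho_k$, and that $\sqrt{H}\sqrt{\varrho_k}$ is bounded in $\calJ_2$ with $\|\sqrt{H}\sqrt{\varrho_k}\|_{\calJ_2}^2=\Tr(\sqrt{H}\varrho_k\sqrt{H})$. Hence $(\II+\sqrt{H})\sqrt{\varrho_k}$ is bounded in $\calJ_2$, so up to a subsequence, it converges weakly in $\calJ_2$ to some $V$. Since $(\II+\sqrt{H})^{-1}$ is compact on $L^2(0,1)$ (the spectrum of $H$ on the torus consists of the divergent sequence $(2\pi p)^2$), and since composition on the left by a compact operator sends weakly convergent sequences of $\calJ_2$ into strongly convergent ones, I would deduce that
$$\sqrt{\varrho_k}=(\II+\sqrt{H})^{-1}\bigl((\II+\sqrt{H})\sqrt{\varrho_k}\bigr)\longrightarrow (\II+\sqrt{H})^{-1}V=:\Psi\qquad\textrm{strongly in }\calJ_2.$$
Writing $\varrho_k=\sqrt{\varrho_k}\cdot\sqrt{\varrho_k}$ and using the continuity of the product $\calJ_2\times\calJ_2\to\calJ_1$, I would then obtain $\varrho_k\to \Psi^2$ strongly in $\calJ_1$. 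Setting $\varrho:=\Psi^2$, this is a nonnegative trace-class operator whose unique nonnegative square root is $\Psi=\sqrt{\varrho}$, hence $\sqrt{\varrho_k}\to\sqrt{\varrho}$ in $\calJ_2$.

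It remains to verify that $\varrho\in\calE_+$ and to establish the lower semicontinuity of $\Tr(\sqrt{H}\varrho\sqrt{H})$. Both follow from the weak convergence $(\II+\sqrt{H})\sqrt{\varrho_k}\rightharpoonup V$ in $\calJ_2$: since multiplication on the left by $\sqrt{H}(\II+\sqrt{H})^{-1}$, which is a bounded operator on $L^2$, is continuous for the weak $\calJ_2$ topology, one identifies
$$\sqrt{H}\sqrt{\varrho}=\sqrt{H}(\II+\sqrt{H})^{-1}V,$$
which shows in particular that $\sqrt{H}\sqrt{\varrho}\in\calJ_2$ and therefore $\varrho\in\calE_+$. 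The lower semicontinuity
$$\Tr(\sqrt{H}\varrho\sqrt{H})=\|\sqrt{H}\sqrt{\varrho}\|_{\calJ_2}^2\leq\liminf_{k\to+\infty}\|\sqrt{H}\sqrt{\varrho_k}\|_{\calJ_2}^2=\liminf_{k\to+\infty}\Tr(\sqrt{H}\varrho_k\sqrt{H})$$
then follows from the classical lower semicontinuity of the $\calJ_2$-norm under weak convergence, applied to a suitable weakly convergent subsequence of $\sqrt{H}\sqrt{\varrho_k}$ (which exists because it is bounded in $\calJ_2$; its weak limit must coincide with $\sqrt{H}\sqrt{\varrho}$ by the identification above, up to a further extraction).

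The main technical point, and the step I expect to require the most care, is the passage from weak convergence of $(\II+\sqrt{H})\sqrt{\varrho_k}$ to strong convergence of $\sqrt{\varrho_k}$ via left multiplication by a compact operator. The argument is standard: approximate $(\II+\sqrt{H})^{-1}$ in operator norm by finite-rank operators $K_n$, observe that for each fixed $n$ the map $B\mapsto K_n B$ is continuous from $\calJ_2$-weak to $\calJ_2$-strong (finite rank makes this a finite-dimensional statement), and then close the approximation by the uniform $\calJ_2$-bound on $(\II+\sqrt{H})\sqrt{\varrho_k}$. Everything else is bookkeeping and the continuity properties of the Hilbert--Schmidt/trace pairings.
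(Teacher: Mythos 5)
The paper does not give a proof of this lemma (it is cited from \cite{MP-JSP}, Lemma 3.1), so there is nothing to compare against; the proposal therefore has to be judged on its own. It contains a genuine gap.

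The step that fails is the central one: ``composition on the left by a compact operator sends weakly convergent sequences of $\calJ_2$ into strongly convergent ones.'' This is false, and the approximation argument you sketch (``for each fixed $n$ the map $B\mapsto K_nB$ is continuous from $\calJ_2$-weak to $\calJ_2$-strong, finite rank makes this a finite-dimensional statement'') is where it breaks. Even for a rank-one $K_n=|e_1\rangle\langle e_1|$, the map $B\mapsto K_nB$ is not compact on $\calJ_2$: its range $\{|e_1\rangle\langle v|\,:\,v\in L^2\}$ is an infinite-dimensional closed subspace of $\calJ_2$ on which the map acts as the identity. Concretely, take $B_k=|e_1\rangle\langle e_k|$. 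Then $B_k\rightharpoonup 0$ weakly in $\calJ_2$ (for any $C\in\calJ_2$ one has $(C,B_k)_{\calJ_2}=\langle e_k,C^*e_1\rangle\to 0$), but $K_nB_k=B_k$ has $\calJ_2$-norm $1$ for every $k$. So weak $\calJ_2$-convergence of $(\II+\sqrt{H})\sqrt{\varrho_k}$ does not, by this mechanism alone, give strong $\calJ_2$-convergence of $\sqrt{\varrho_k}$; the compactness of $(\II+\sqrt{H})^{-1}$ must be used on \emph{both} sides, and the one-sided argument leaves the possibility of mass escaping in the ``column'' direction.

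The structure you have not exploited is the self-adjointness and nonnegativity of $\varrho_k$, which provide the two-sided control. A correct route is: extract a weak-$*$ limit $\varrho\geq 0$ of $\varrho_k$ in $\calJ_1$; use the bound $\Tr(\sqrt H\varrho_k\sqrt H)\leq C$ together with the min-max inequality of Lemma \ref{lieb} to get the uniform tail estimate $\sum_{p>N}\rho^{(k)}_p\leq C/\lambda_{N+1}[H]$, which rules out loss of mass and gives $\Tr\varrho_k\to\Tr\varrho$; then invoke Gr\"umm's theorem (for nonnegative trace-class operators, weak-$*$ convergence plus convergence of traces implies $\calJ_1$-convergence) to conclude $\varrho_k\to\varrho$ in $\calJ_1$; finally the Powers--St\o{}rmer inequality $\|\sqrt{\varrho_k}-\sqrt\varrho\|^2_{\calJ_2}\leq\|\varrho_k-\varrho\|_{\calJ_1}$ yields $\sqrt{\varrho_k}\to\sqrt\varrho$ in $\calJ_2$. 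Your final paragraph on lower semicontinuity (identifying the weak $\calJ_2$-limit of $\sqrt H\sqrt{\varrho_k}$ with $\sqrt H\sqrt\varrho$ and using weak lower semicontinuity of the $\calJ_2$-norm) is sound and can be kept essentially as written.
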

\begin{lemma}[\cite{MP-JSP}, Lemma 5.2]
\label{propentropie}
The application $\varrho\mapsto \Tr (\varrho \log \varrho-\varrho)$ possesses the following properties.\\
(i) There exists a constant $C>0$ such that, for all $\varrho\in \calE_+$, we have
\be
\label{souslin}
\Tr \big(\varrho\log \varrho-\varrho\big)\geq -C\left(\Tr \big(\sqrt{H}\varrho\sqrt{H}\big)\right)^{1/2}.
\ee
(ii) Let $\varrho_k$ be a bounded sequence of $\calE_+$ such that $\varrho_k$ converges to $\varrho$ in $\calJ_1$, then $\varrho_k \log \varrho_k-\varrho_k$ converges to $\varrho \log \varrho-\varrho$ in $\calJ_1$.\\
\end{lemma}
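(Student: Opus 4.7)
The plan is as follows.

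For part (i), I would diagonalize $\varrho = \sum_{p \in \NN^*} \rho_p |\phi_p\rangle\langle\phi_p|$ (ordering the eigenvalues decreasingly) so that
\[
\Tr\bigl(\varrho \log \varrho - \varrho\bigr) = \sum_{p} \beta(\rho_p), \qquad \beta(x) = x\log x - x.
\]
Since $\beta$ is nonnegative on $[e, \infty)$ and vanishes at $x = e$, the contributions from eigenvalues $\rho_p > e$ are nonnegative and can be discarded to get a lower bound:
\[
\Tr\bigl(\varrho \log \varrho - \varrho\bigr) \geq \sum_{p: \rho_p \leq e} \beta(\rho_p) \geq -C \sum_{p: \rho_p \leq e} \sqrt{\rho_p},
\]
where I use the elementary estimate $|\beta(x)| \leq C\sqrt{x}$ on $[0, e]$, which follows from the boundedness of $\sqrt{x}\,|\log x - 1|$ on that interval. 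It then remains to control $\sum_p \sqrt{\rho_p}$, for which I would reproduce the argument appearing in \fref{estbetaeta}: Cauchy-Schwarz against the summable sequence $1/p^2$ together with the Lieb-Thirring type inequality of Lemma \ref{lieb} (which provides $\sum_p p^2 \rho_p \leq C \Tr \sqrt{H}\varrho\sqrt{H}$ via a min-max comparison with the eigenvalues of $-\Delta$) yields
\[
\sum_p \sqrt{\rho_p} \leq \Bigl(\sum_p p^2 \rho_p\Bigr)^{\!1/2}\Bigl(\sum_p \tfrac{1}{p^2}\Bigr)^{\!1/2} \leq C \bigl(\Tr \sqrt{H}\varrho\sqrt{H}\bigr)^{1/2},
\]
closing the estimate.

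For part (ii), write $M := \sup_k \|\varrho_k\|_{\calL(L^2)}<\infty$ (finite since $\varrho_k$ is bounded in $\calJ_1$) and denote by $(\rho_p^k, \phi_p^k)$ the spectral elements of $\varrho_k$. The strategy is to split $\beta(\varrho_k)$ into a ``small-eigenvalue tail'' $\beta(\varrho_k)\mathbf{1}_{[0,\delta]}(\varrho_k)$ and a ``bulk part'' $\beta(\varrho_k)\mathbf{1}_{(\delta,M]}(\varrho_k)$, and control each in $\calJ_1$. The tail is handled uniformly in $k$ using the bound from part (i): combining $|\beta(\rho_p^k)|\leq C \sqrt{\rho_p^k}$ with the Lieb-Thirring argument above gives
\[
\bigl\|\beta(\varrho_k)\mathbf{1}_{[0,\delta]}(\varrho_k)\bigr\|_{\calJ_1}=\sum_{p:\rho_p^k\leq \delta}|\beta(\rho_p^k)|\;\xrightarrow[\delta\to 0]{}\;0
\]
uniformly in $k$, using that $\|\varrho_k\|_{\calE}$ is uniformly bounded, and similarly for $\varrho$. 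For the bulk part, I would choose $\delta>0$ outside the spectrum of $\varrho$; the convergence $\varrho_k \to \varrho$ in $\calJ_1$ (hence in operator norm) then ensures that for $k$ large, $\delta$ is also outside $\mathrm{sp}(\varrho_k)$, so the spectral projectors $\mathbf{1}_{(\delta,M]}(\varrho_k)$ converge in operator norm to $\mathbf{1}_{(\delta,M]}(\varrho)$; since $\beta$ is smooth on $[\delta, M]$, standard continuity of the functional calculus (via e.g.\ Stone-Weierstrass polynomial approximation of $\beta$ on $[\delta,M]$, which transfers convergence in $\calJ_1$ through a finite number of products) yields $\beta(\varrho_k)\mathbf{1}_{(\delta,M]}(\varrho_k) \to \beta(\varrho)\mathbf{1}_{(\delta,M]}(\varrho)$ in $\calJ_1$. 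Letting $\delta \to 0$ concludes.

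The main obstacle lies in part (ii): because $\beta$ is not Lipschitz at the origin, a direct operator-Lipschitz estimate of the form $\|\beta(\varrho_k)-\beta(\varrho)\|_{\calJ_1}\leq C\|\varrho_k-\varrho\|_{\calJ_1}$ is unavailable. The resolution relies crucially on the uniform tail bound derived in part (i) together with $\calE$-boundedness, which reduces the analysis to a spectral interval $[\delta, M]$ bounded away from the singularity, on which $\beta$ is smooth. A delicate technical point is choosing $\delta$ outside the (at most countable) spectrum of $\varrho$ and exploiting operator-norm convergence to ensure the spectral projectors behave well; once this is set up, the remaining approximation argument is classical.
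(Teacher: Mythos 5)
This lemma is quoted from \cite{MP-JSP} (Lemma 5.2) and not re-proved in the present paper, so there is no in-paper proof to compare against; I assess your argument on its own.

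Part (i) is correct and matches the argument the paper itself deploys inside the proof of Lemma \ref{eqfreeE}: discard eigenvalues $\rho_p>e$ since $\beta\geq 0$ there, bound $|\beta|$ by $C\sqrt{\cdot}$ on $[0,e]$, and close via Cauchy--Schwarz against $1/p^2$ together with the Lieb--Thirring-type bound of Lemma \ref{lieb}. This is exactly what appears in \eqref{estbetaeta}.

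Part (ii) has the right structure (spectral tail plus bulk) but, as written, two steps do not go through. First, the tail estimate: with a fixed constant $C$ in $|\beta(x)|\leq C\sqrt{x}$, Lieb--Thirring only gives a uniform bound $\sum_{p:\rho_p^k\leq\delta}|\beta(\rho_p^k)|\leq C\|\varrho_k\|_{\calE}^{1/2}$, which does \emph{not} decay as $\delta\to 0$. You need to exploit that $|\beta(x)|=o(\sqrt{x})$ as $x\to 0^+$, i.e. $|\beta(x)|\leq c(\delta)\sqrt{x}$ on $[0,\delta]$ with $c(\delta)\to 0$, or alternatively split additionally by the index $p$ as in \eqref{estbetaeta} to pick up a $1/\sqrt{N}$ factor. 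Second, the bulk: the direct Stone--Weierstrass approximation of $\beta$ on $[\delta,M]$ by a polynomial $p$ does not close the argument, because $p(\varrho_k)$ is not trace class when $p(0)\neq 0$, and the difference $\bigl(\beta\mathbf{1}_{(\delta,M]}-p\bigr)(\varrho_k)$ has no smallness in $\calJ_1$ since $p$ is not small on $[0,\delta]$. A workable fix is to replace the sharp cutoff $\mathbf{1}_{(\delta,M]}$ by a smooth $\chi$ agreeing with it on the spectra of $\varrho$ and of $\varrho_k$ for $k$ large (possible precisely because $\delta$ lies in a spectral gap of $\varrho$ and $\varrho_k\to\varrho$ in operator norm), write $\beta\chi(x)=x\,h(x)$ with $h$ smooth compactly supported, and estimate
$$
\varrho_k h(\varrho_k)-\varrho h(\varrho)=(\varrho_k-\varrho)h(\varrho_k)+\varrho\bigl(h(\varrho_k)-h(\varrho)\bigr),
$$
controlling the first term by $\|h\|_{L^\infty}\|\varrho_k-\varrho\|_{\calJ_1}$ and the second by $\|\varrho\|_{\calJ_1}\|h(\varrho_k)-h(\varrho)\|_{\calL(L^2)}$, the last factor tending to zero by norm continuity of the functional calculus for continuous compactly supported functions. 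You should also make explicit that $\mathrm{rank}\,\mathbf{1}_{(\delta,M]}(\varrho_k)\leq\|\varrho_k\|_{\calJ_1}/\delta$ is uniformly bounded, which is what underlies any finite-rank upgrade from operator-norm to $\calJ_1$ convergence. With these repairs the proposal is a valid proof.
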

\begin{lemma}[\cite{MP-JSP}, Lemma 5.3] \label{lemdiff} For $\eta \in (0,1]$, let  $\beta_\eta(x)=(x+\eta) \log (x+\eta)-x-\eta \log \eta$, and let $\varrho \in \calE_+$ and $\omega$ be a trace-class self-adjoint operator. Then, the G\^ateaux derivative of the application 
$$\varrho\mapsto S_\eta(\varrho)=\Tr \big(\beta_\eta(\varrho) \big)$$ 
at $\varrho$ in the direction $\omega$ is well-defined and we have
$$
D S_\eta(\varrho) (\omega) = \Tr \big(\beta_\eta'( \varrho) \omega\big). 
$$ 
\end{lemma}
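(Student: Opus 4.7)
The plan is to combine the Klein inequality (Theorem \ref{klein2}) with a continuity argument for the logarithm of a bounded perturbation of $\varrho+\eta$, so as to squeeze the finite difference $(S_\eta(\varrho+t\omega)-S_\eta(\varrho))/t$ between two quantities that both converge to $\Tr(\log(\varrho+\eta)\omega)=\Tr(\beta_\eta'(\varrho)\omega)$.

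First I would fix $t_0:=\eta/(2\|\omega\|_{\calL(L^2)})$, so that for every $|t|\leq t_0$ the self-adjoint trace-class operator $\varrho+t\omega$ satisfies $\varrho+t\omega+\eta\geq(\eta/2)\II$. By functional calculus the operators $\log(\varrho+t\omega+\eta)$ and $\beta_\eta(\varrho+t\omega)$ are then well-defined and bounded; the latter actually belongs to $\calJ_1$ because $\beta_\eta(0)=0$ and $\beta_\eta(x)=x\log\eta+O(x^2)$ near the origin, while the eigenvalues of $\varrho+t\omega$ tend to $0$. Likewise $\log(\varrho+\eta)\,\omega\in \calJ_1$, since $\omega\in\calJ_1$ and $\log(\varrho+\eta)\in\calL(L^2)$.

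Next I would rewrite $\beta_\eta(x)=\varphi(x+\eta)-x-\eta\log\eta$ with $\varphi(y)=y\log y$, so that the constant term cancels in $S_\eta(\varrho+t\omega)-S_\eta(\varrho)=\Tr[\varphi(\varrho+t\omega+\eta)-\varphi(\varrho+\eta)]-t\Tr(\omega)$. Applying Theorem \ref{klein2} to $(H_1,H_2,\varphi)=(\varrho+\eta+t\omega,\varrho+\eta,y\log y)$, and then once more with the two operators swapped, one obtains for $t\in(0,t_0]$ the two-sided estimate
$$
\Tr\bigl(\omega\log(\varrho+\eta)\bigr)\leq \frac{S_\eta(\varrho+t\omega)-S_\eta(\varrho)}{t}\leq \Tr\bigl(\omega\log(\varrho+t\omega+\eta)\bigr),
$$
with reversed inequalities for $t\in[-t_0,0)$. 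The closing step is to use the resolvent identity $\log A-\log B=\int_0^\infty (s+B)^{-1}(A-B)(s+A)^{-1}\,ds$, which gives $\|\log(\varrho+t\omega+\eta)-\log(\varrho+\eta)\|_{\calL(L^2)}=O(t)$ uniformly; combined with the elementary bound $|\Tr(\omega X)|\leq \|\omega\|_{\calJ_1}\|X\|_{\calL(L^2)}$, this forces both sides of the sandwich to the same limit $\Tr(\omega\log(\varrho+\eta))$, which is the claimed formula for $DS_\eta(\varrho)(\omega)$.

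I expect the main obstacle to be justifying the application of Theorem \ref{klein2}: the operators $\varphi(\varrho+\eta)$ and $\varphi(\varrho+t\omega+\eta)$ are individually not trace class, because their eigenvalues tend to $\eta\log\eta\neq 0$, yet their difference must be interpreted as a genuine trace-class operator for the inequality to make sense as a standard trace. The rescue is the decomposition $\varphi(\varrho+t\omega+\eta)-\varphi(\varrho+\eta)=t\omega\log(\varrho+t\omega+\eta)+(\varrho+\eta)\bigl[\log(\varrho+t\omega+\eta)-\log(\varrho+\eta)\bigr]$, combined with the resolvent identity above, which displays each summand explicitly as an element of $\calJ_1$ and makes Theorem \ref{klein2} rigorously applicable in our setting.
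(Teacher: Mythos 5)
Your argument is correct, and since the paper does not prove this lemma itself (it simply cites Lemma~5.3 of \cite{MP-JSP}), I can only assess the proposal on its own terms. The squeeze between the two Klein inequalities, applied once in each order, is a clean and complete route: the lower and upper bounds $\Tr(\omega\log(\varrho+\eta))$ and $\Tr(\omega\log(\varrho+t\omega+\eta))$ are both finite, the operator $\log(\varrho+t\omega+\eta)-\log(\varrho+\eta)$ lies in $\calJ_1$ with norm $O(|t|)$ by the resolvent identity (since $\omega\in\calJ_1$ and the resolvents are uniformly bounded on the ray, owing to $\varrho+t\omega+\eta\geq(\eta/2)\II$), and this closes the sandwich from both sides of $t=0$. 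Your decomposition at the end correctly shows that the combination appearing inside Klein's inequality is genuinely trace class.

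One small remark: the trace-class concern you flag about $\varphi(\varrho+t\omega+\eta)$ is self-inflicted by your substitution $\beta_\eta(x)=\varphi(x+\eta)-x-\eta\log\eta$. You can avoid it entirely by applying Theorem~\ref{klein2} directly to $\beta_\eta$ itself, which is convex on $(-\eta,\infty)$ (since $\beta_\eta''(x)=1/(x+\eta)>0$), has $\beta_\eta(0)=0$, and satisfies $|\beta_\eta(x)|\leq C_\eta|x|$ near the origin; hence $\beta_\eta(\varrho+t\omega)$ and $\beta_\eta(\varrho)$ are each individually in $\calJ_1$, and the combination $\beta_\eta(\varrho+t\omega)-\beta_\eta(\varrho)-t\omega\,\beta_\eta'(\varrho)$ is manifestly trace class without any decomposition. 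Applying Klein to $\beta_\eta$ at the pair $(\varrho+t\omega,\varrho)$ is literally the same inequality as applying it to $\varphi$ at the pair $(\varrho+t\omega+\eta,\varrho+\eta)$, so this rephrasing changes nothing in substance but removes the one nontrivial technical step from the argument.
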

\begin{lemma} [\cite{MP-JSP}, Lemma A.1] \label{lieb} Let $\varrho \in \calE_+$ and denote by $(\rho_p)_{p\geq 1}$ the nonincreasing sequence of nonzero eigenvalues of $\varrho$, associated to the orthonormal family of eigenfunctions $(\phi_p)_{p\geq 1}$. Denote by $(\lambda_p[H])_{p \geq 1}$ the nondecreasing sequence of nonzero eigenvalues of the Hamiltonian $H$. Then we have
$$
\Tr \big(\sqrt{H} \varrho \sqrt{H}\big)=\sum_{p \geq 1} \rho_p \,(  \sqrt{H} \phi_p, \sqrt{H}\phi_p) \geq \sum_{p \geq 1} \rho_p \,\lambda_p[H].
$$
As a consequence,
$$
\Tr \big( \varrho ^{2/3}\big) \leq C \left(\Tr \big( \sqrt{H} \varrho \sqrt{H}\big)\right)^{2/3}.
$$
\end{lemma}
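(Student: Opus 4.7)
The equality $\Tr(\sqrt H\varrho\sqrt H)=\sum_p\rho_p(\sqrt H\phi_p,\sqrt H\phi_p)$ is just the spectral representation: since $\varrho\in\calE_+$ has the decomposition $\varrho=\sum_p\rho_p|\phi_p\rangle\langle\phi_p|$ with $\phi_p\in\Hunper=D(\sqrt H)$, writing $\sqrt H|\varrho|\sqrt H=(\sqrt H\sqrt\varrho)(\sqrt\varrho\sqrt H)$ and using $\Tr(A^*A)=\|A\|_{\calJ_2}^2=\sum_k\|Ae_k\|^2$ with the basis $(\phi_p)$ (as in formula \eqref{a6}) gives the claim after a standard truncation/monotone convergence argument.

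For the main inequality, my plan is to invoke the classical Ky Fan maximum principle, a direct consequence of Courant--Fischer: for every integer $N$ and every orthonormal family $\phi_1,\ldots,\phi_N\in D(\sqrt H)$,
\[
\sum_{k=1}^N(\sqrt H\phi_k,\sqrt H\phi_k)\;\geq\;\sum_{k=1}^N\lambda_k[H].
\]
Setting $a_p:=(\sqrt H\phi_p,\sqrt H\phi_p)$ and performing Abel summation,
\[
\sum_{p=1}^N\rho_p\,a_p=\sum_{p=1}^{N-1}(\rho_p-\rho_{p+1})\sum_{k=1}^p a_k+\rho_N\sum_{k=1}^N a_k,
\]
I observe that each factor $\rho_p-\rho_{p+1}$ is nonnegative since $(\rho_p)$ is nonincreasing. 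Substituting the Ky Fan bound term by term and reversing the Abel transformation yields $\sum_{p=1}^N\rho_p a_p\geq\sum_{p=1}^N\rho_p\lambda_p[H]$. Letting $N\to\infty$ (the series on the right is monotone) gives the desired lower bound.

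For the corollary, I would use the explicit structure of $H=-d^2/dx^2$ on $[0,1]$ with periodic boundary conditions: its nonzero eigenvalues are $(2\pi n)^2$, $n\in\ZZ^*$, each of multiplicity two, so reordering nondecreasingly gives $\lambda_p[H]\geq c\,p^2$ for some absolute $c>0$. Combined with the first part of the lemma,
\[
\Tr(\sqrt H\varrho\sqrt H)\;\geq\;c\sum_{p\geq 1}\rho_p\,p^{2}.
\]
Hölder's inequality with exponents $3/2$ and $3$ then produces
\[
\sum_{p\geq 1}\rho_p^{2/3}=\sum_{p\geq 1}(\rho_p\,p^{2})^{2/3}\,p^{-4/3}\;\leq\;\Bigl(\sum_{p\geq 1}\rho_p\,p^{2}\Bigr)^{2/3}\Bigl(\sum_{p\geq 1}p^{-4}\Bigr)^{1/3},
\]
and since $\sum_p p^{-4}<\infty$ this gives $\Tr(\varrho^{2/3})\leq C\bigl(\Tr(\sqrt H\varrho\sqrt H)\bigr)^{2/3}$ as claimed.

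The only nontrivial step is the reduction to Ky Fan's inequality; the rest is bookkeeping (interchanging the trace with a series of nonnegative terms, and a straightforward Hölder estimate). The main obstacle — in fact a very minor one — is to be careful that $\varrho\in\calE_+$ guarantees that all the eigenfunctions of $\varrho$ associated with nonzero eigenvalues actually lie in $D(\sqrt H)$, so that the quadratic form values $(\sqrt H\phi_p,\sqrt H\phi_p)$ are finite and the Ky Fan inequality is applicable.
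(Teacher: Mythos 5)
Your plan---spectral representation for the equality, Ky Fan's maximum principle plus Abel summation for the inequality, and H\"older for the consequence---is the standard route, and the H\"older computation is exactly the one the paper sketches after the lemma in the Appendix. The problem is in the invocation of Ky Fan. Courant--Fischer gives, for an orthonormal family $\phi_1,\ldots,\phi_N$ in the form domain,
\[
\sum_{k=1}^N \big(\sqrt H \phi_k, \sqrt H \phi_k\big) \;\geq\; \sum_{k=1}^N \mu_k,
\]
where $\mu_1 \leq \mu_2 \leq \cdots$ is the spectrum of $H$ \emph{counted with multiplicity and including the ground eigenvalue}. For the periodic Laplacian on $(0,1)$ this sequence begins $0,(2\pi)^2,(2\pi)^2,(4\pi)^2,(4\pi)^2,\ldots$, whereas the lemma's sequence is $\lambda_p[H]=(2\pi p)^2$, which drops the zero mode and compresses multiplicities. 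For every $N$ one has $\sum_{k\leq N}\mu_k<\sum_{k\leq N}\lambda_k[H]$, so the substitution you make in the Abel-summation step is not licensed by Ky Fan. Indeed your claimed bound already fails for $N=1$: take $\phi_1\equiv 1$, for which the left side is $0$ while $\lambda_1[H]=(2\pi)^2>0$.

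This is not a mere bookkeeping slip. The rank-one operator $\varrho=|\phi_1\rangle\langle\phi_1|$ with $\phi_1\equiv 1$ gives $\Tr(\sqrt H\varrho\sqrt H)=0$ while $\sum_p\rho_p\lambda_p[H]>0$ and $\Tr(\varrho^{2/3})>0$, so both the first inequality and the consequence as written break down whenever $\varrho$ puts mass on the kernel of $H$. Some extra input is needed beyond Ky Fan---for instance restricting to the orthogonal complement of the constants, or replacing $H$ by $H+c$---and presumably the cited reference carries such a hypothesis. Your Abel-summation mechanics and the H\"older estimate are fine once a correct lower bound on $\sum_{k\leq N}(\sqrt H\phi_k,\sqrt H\phi_k)$ is in hand; the gap is precisely in claiming that Ky Fan directly produces $\sum_{k\leq N}\lambda_k[H]$ with the $\lambda_k[H]$ of the lemma.
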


Only the last estimate above is not proved in \cite{MP-JSP}, it follows from
$$
\Tr \big(\varrho^{2/3}\big)=\sum_{p \geq 1} \rho_p^{2/3} \leq \left(\sum_{p \geq 1}\rho_p \lambda_p[H]\right)^{2/3} \left( \sum_{p \geq 1}  (\lambda_p[H])^{-2}\right)^{1/3},
$$
and the fact that $\lambda_p[H]=( 2\pi p)^2$.
\bs

\noindent  
\textbf{Acknowledgment.} O. Pinaud acknowledges support from NSF CAREER grant DMS-1452349 and F. M\'ehats acknowledge support from the ANR project Moonrise ANR-14-CE23-0007-01.


\end{document}